\documentclass{article}
\usepackage[utf8]{inputenc}
\usepackage{amssymb}
\usepackage{amsmath}
\usepackage{amsthm}
\usepackage{tikz}
\usepackage{url}
\usepackage{xcolor}
\usepackage{mathtools}
\usepackage{geometry}
\usepackage{enumerate}
\usepackage{accents}
\usepackage{hyperref}
\usepackage{xkeyval}
\usepackage[affil-it]{authblk}
\usepackage{stmaryrd}
\usepackage{enumitem}

\usetikzlibrary{calc}

\newcommand{\scrI}{\mathcal{I}}

\newcommand{\scrJ}{\mathcal{J}}

\renewcommand{\P}{\mathbb{P}}

\newcommand{\Q}{\mathbb{Q}}
\newcommand{\R}{\mathbb{R}}

\newcommand{\append}{{}^\frown}
\newcommand{\boldsig}{\boldsymbol{\Sigma}}

\newcommand\forces{\Vdash}

\newcommand{\frakb}{\mathfrak{b}}
\newcommand{\frakd}{\mathfrak{d}}
\newcommand{\frakc}{\mathfrak{c}}

\newcommand{\Pow}{\mathcal{P}}
\newcommand{\non}{\operatorname{non}}
\newcommand{\cov}{\operatorname{cov}}
\newcommand{\add}{\operatorname{add}}
\newcommand{\cof}{\operatorname{cof}}

\newcommand{\nul}{\mathcal{N}}
\newcommand{\meager}{\mathcal{M}}

\newcommand{\cf}{\operatorname{cf}}
\newcommand{\Add}{\operatorname{Add}}

\newcommand{\Lev}{\operatorname{Lev}}
\newcommand{\restrict}{\upharpoonright}

\newcommand{\ZFC}{\mathsf{ZFC}}
\newcommand{\CH}{\mathsf{CH}}

\newcommand{\Miller}{\mathbb{M}}

\newcommand{\MeC}{\operatorname{Meas}(2^\omega)}
\newcommand{\PrC}{\operatorname{Prob}(2^\omega)}

\newcommand{\PTfg}{\mathbb{PT}_{f,g}}
\newcommand{\UN}{\mathcal{UN}}

\newcommand{\SN}{\mathcal{SN}}
\newcommand{\Lb}{\mathsf{Lb}}
\newcommand{\baA}{\mathbb{A}}
\newcommand{\Rpt}{\mathbf{R}_\mathrm{pt}}
\newcommand{\truth}[1] {\llbracket #1 \rrbracket}

\newcommand{\splt}{\operatorname{sp}}

\newcommand{\seq}[1]{{\langle#1\rangle}}
\DeclarePairedDelimiter\abs{\lvert}{\rvert}

\renewcommand\emptyset{\varnothing}
\renewcommand\subset{\subseteq}
\renewcommand{\setminus}{\smallsetminus}

\renewcommand{\le}{\leqslant}
\renewcommand{\ge}{\geqslant}
\renewcommand{\leq}{\leqslant}
\renewcommand{\geq}{\geqslant}

\newcommand{\subjclass}[2][2020]{%
  \let\oldthefootnote\thefootnote%
  \renewcommand{\thefootnote}{}%
  \footnotetext{\emph{2020 Mathematics Subject Classification.} #2}%
  \let\thefootnote\oldthefootnote%
}
\newcommand{\keywords}[1]{%
  \let\oldthefootnote\thefootnote%
  \renewcommand{\thefootnote}{}%
  \footnotetext{\emph{Key words and phrases.} #1}%
  \let\thefootnote\oldthefootnote%
}

\theoremstyle{definition}
\newtheorem{thm}{Theorem}[section]
\newtheorem*{thm*}{Theorem}
\newtheorem{defi}[thm]{Definition}
\newtheorem*{defi*}{Definition}
\newtheorem{lem}[thm]{Lemma}
\newtheorem{prop}[thm]{Proposition}
\newtheorem*{lem*}{Lemma}
\newtheorem{fact}[thm]{Fact}
\newtheorem*{fact*}{Fact}
\newtheorem{rmk}[thm]{Remark}
\newtheorem*{rmk*}{Remark}
\newtheorem{cor}[thm]{Corollary}
\newtheorem*{cor*}{Corollary}
\newtheorem*{convention*}{Convention}
\newtheorem*{notation*}{Notation}
\newtheorem{question}[thm]{Question}

\usepackage[backend=biber,style=alphabetic,sorting=nty,doi=false,isbn=false,url=false,eprint=false]{biblatex}
\renewbibmacro{in:}{}

\definecolor{myred}{RGB}{230, 150, 100}
\definecolor{mygreen}{RGB}{150, 230, 100}
\definecolor{myblue}{RGB}{130, 220, 220}
\hypersetup{linkbordercolor=myred,citebordercolor=mygreen,urlbordercolor=myblue}

\usepackage{xpatch}
\newcounter{proofdepth}

\xpretocmd{\proof}{%
	\stepcounter{proofdepth}%
	\ifnum\value{proofdepth}=1
	\else
	\fi
}{}{}

\xapptocmd{\endproof}{%
	\addtocounter{proofdepth}{-1}%
}{}{}
\newcounter{claim}[thm]

\newtheorem{innerclaim}{Claim}

\newenvironment{claim}{%
	\refstepcounter{claim}%
	\begin{innerclaim}
	}{%
	\end{innerclaim}
}

\title{Forcing and Cardinal Invariants of Universally Null Sets}
\author{Tatsuya Goto}

\affil{Institute of Discrete Mathematics and Geometry, TU Wien, Wiedner Hauptstra\ss{}e 8-10/104, 1040 Wien, Austria.
\\e-mail: goto.tatsuya@icloud.com}
\date{\today}

\begin{document}
	\maketitle
    
    \subjclass{03E40}
    \keywords{Miller forcing, universally null sets}

    \begin{abstract}
        We study universally null sets in forcing extensions and the
        cardinal invariants of their ideal $\mathcal{UN}$.  We prove that the
        Miller model contains a nonmeager universally null set of
        cardinality $\aleph_2=\mathfrak{c}$, answering a question of Brendle
        and Larson.  The proof uses preservation of universal nullity
        by countable support iterations of Miller forcing and shows
        that the set of Miller reals added during the iteration is
        universally null.  We also prove that every universally null
        set in the $\mathbb{PT}_{f,g}$ model has cardinality at most $\aleph_1$,
        and give a consistent example of a universally null set that
        acquires a perfect subset after an $\omega_1$-preserving
        forcing.  For the cardinal invariants, we show that
        $|X|<\operatorname{cof}(\mathcal{UN})$ for every
        $X\in\mathcal{UN}$, and that
        $\operatorname{add}(\mathcal{N})=\mathfrak{c}$ implies
        $\operatorname{cof}(\mathcal{UN})=\mathfrak{d}_{\mathfrak{c}}$.
        Finally, we prove the
        consistency of
        $\operatorname{add}(\mathcal{UN})<\operatorname{cov}(\mathcal{UN})
        <\operatorname{non}(\mathcal{UN})<\operatorname{cof}(\mathcal{UN})$.
    \end{abstract}

    \section{Introduction}

    A subset of $2^\omega$ or $\omega^\omega$ is \emph{universally
    null} if it has outer measure zero for every atomless finite
    Borel measure on that space.  Universally null sets form a
    $\sigma$-ideal, denoted by $\UN$, and contain no nonempty perfect
    sets.  On $2^\omega$, we have
    \[
        \SN\subseteq\UN\subseteq\nul,
    \]
    where $\SN$ is the strong measure zero ideal and $\nul$ is the
    null ideal for the usual product measure.  We study the behavior
    of universally null sets under forcing and the additivity,
    covering number, uniformity, and cofinality of $\UN$.

    The possible sizes of universally null sets provide one
    starting point.  Grzegorek \cite{grzegorek1980solution}
    constructed such a set of cardinality $\non(\nul)$, and
    Rec\l aw \cite{reclaw} showed that every set well-ordered by a
    Borel relation is universally null.  On the other hand, in the
    iterated Sacks model every universally null set has cardinality
    at most $\aleph_1$
    \cite[Theorem~1.1.4]{Ciesielski_Pawlikowski_2004}.
    Thus the existence of a universally null set of size continuum
    depends on the model.

    In their unpublished note \emph{Nonmeager universally null
    sets}, Brendle and Larson \cite{BrendleLarson2017Nonmeager}
    prove that
    \[
        \min\{\frakd,\non(\nul)\}=\cof(\meager)
    \]
    implies the existence of a nonmeager universally null set of
    cardinality $\cof(\meager)$.  Here $\meager$ denotes the meager
    ideal, $\frakd$ is the dominating number, and
    $\frakc=2^{\aleph_0}$.  Their hypothesis fails in the Miller
    model, where
    \[
        \non(\nul)=\aleph_1<\frakd=\cof(\meager)=\frakc=\aleph_2.
    \]
    Thus their theorem does not settle the existence of a
    nonmeager universally null set of size continuum in this
    model.  They ask, in particular, whether the
    Miller model contains a nonmeager universally null set of
    cardinality $\aleph_2$.  We answer this question affirmatively.

    \begin{thm*}[Corollary~\ref{cor:brendle_larson}]
        In the Miller model there exists a nonmeager universally
        null set of cardinality $\aleph_2=\frakc$.
    \end{thm*}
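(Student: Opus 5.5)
The plan is to take as witness the set $X=\{m_\alpha:\alpha<\omega_2\}$ of Miller reals added by the countable support iteration $\langle \P_\alpha,\dot{\Miller}_\alpha:\alpha\le\omega_2\rangle$ of Miller forcing over a model of $\CH$. Here $m_\alpha\in\omega^\omega$ is the generic real added at stage $\alpha$; if needed we transfer to $2^\omega$ by a Borel isomorphism, since universal nullity is invariant under Borel isomorphisms. Three properties of $X$ have to be checked: $|X|=\aleph_2$, $X$ is nonmeager, and $X$ is universally null.

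The cardinality is immediate, because the $m_\alpha$ are pairwise distinct. For nonmeagerness, take a Borel meager set $B$ in the final model. Its code appears at some stage $\beta<\omega_2$, by properness and the $\aleph_2$-c.c. A Miller real over $V[G_\beta]$ is not contained in any meager set coded in $V[G_\beta]$: given a superperfect tree and a closed nowhere dense set, we can extend the stem past every splitting node and so find a superperfect subtree avoiding the set. Hence $m_\beta\notin B$, and $X$ is not covered by any meager set. This is the standard fact that Miller forcing does not make the ground model reals meager, applied in the strong form "the generic avoids ground-model meager sets."

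The main step, and the one I expect to be the real obstacle, is universal nullity. Let $\mu$ be an atomless Borel probability measure in $V[G_{\omega_2}]$. By a reflection argument, $\mu$ is coded at some stage $\gamma<\omega_2$; we may even take $\gamma$ of cofinality $\omega_1$, so that $\mu$ comes from a continuous $\P_\gamma$-name. We split $X=\{m_\alpha:\alpha<\gamma\}\cup\{m_\alpha:\alpha\ge\gamma\}$. For the tail, the claim is that over $V[G_\gamma]$ the set of later Miller reals is $\mu$-null. To prove it, for each $\alpha\ge\gamma$ and each condition we would fuse a superperfect tree $T$ so that $\mu([T])$ is small: at each splitting node, since $\mu$ is atomless and the successors are infinitely many, the measure of the subtree above a successor $s\append n$ tends to $0$ as $n\to\infty$. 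So one can thin each splitting level to keep only successors of measure at most $\varepsilon 2^{-k}$. The difficulty is to make this uniform across all tail stages at once, so that we get a single $\mu$-null $G_\delta$ set containing all $m_\alpha$, $\alpha\ge\gamma$. I would try to show that one can find, in $V[G_\gamma]$, a single Borel $\mu$-null set $N$ with $\Vdash m_\alpha\in N$ for all $\alpha\ge\gamma$, using the fact that the thinning above is definable from $\mu$ alone. The concrete step is to prove that the set of "$\mu$-small-branching" reals is $\mu$-null and is dense in the Miller ordering, and then to iterate this density with a preservation theorem for countable support iterations, namely preservation of universal nullity of ground-model sets and preservation of the relevant $\omega^\omega$-bounding-like property for measures.

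The initial segment $\{m_\alpha:\alpha<\gamma\}$ has size at most $\aleph_1$. It is universally null in $V[G_\gamma]$ if we argue by induction on $\gamma$, simultaneously applying the same argument to all measures in $V[G_\gamma]$. It then remains universally null in the final model by the preservation of universal nullity through countable support iterations of Miller forcing, which is the key preservation lemma I would prove first. That lemma amounts to showing that the remainder iteration does not add a new measure giving positive outer measure to a ground-model universally null set. For this I would again use fusion with the measure-thinning trick, applied to names for measures, and then a standard properness preservation argument at limit stages.
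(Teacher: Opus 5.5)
\textbf{The nonmeagerness step fails.} If $m_\beta$ avoided every meager Borel set coded in $V[G_\beta]$, it would be a Cohen real over $V[G_\beta]$, but Miller forcing adds no Cohen reals. Your tree claim is also false. A superperfect tree $T$ can have nowhere dense body (for instance, if every node has a nonsplitting extension), and then no subtree of $T$ avoids the closed nowhere dense set $[T]$.

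In the paper's convention, Miller trees consist of strictly increasing sequences. So every $m_\alpha$ lies in the closed nowhere dense set of strictly increasing functions, and $X$ is nowhere dense; it cannot be the witness. The paper does not try to make $X$ nonmeager. By $\CH$ and Brendle--Larson, the ground model has a nonmeager universally null set $Y$ (a Luzin set would also do). $Y$ stays nonmeager because Miller iterations preserve nonmeager sets (Bartoszy\'nski), and stays universally null by Corollary~\ref{cor:preservation}. The witness is $X\cup Y$.

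With unrestricted superperfect trees, nonmeagerness of $X$ itself can be rescued, but only by a genuine density argument. Take a meager $B\subseteq\bigcup_kF_k$ coded at stage $\beta$, with $F_0\subseteq F_1\subseteq\cdots$ closed nowhere dense, and a condition $p$. At a coordinate $\alpha\ge\beta$ outside the support of $p$, put a tree in which the cone reached right after choosing successor $j$ at a splitting node misses $F_j$. Then $[T]\cap B$ is $\sigma$-compact, so the unbounded real $m_\alpha$ avoids $B$. That is not the argument you gave.

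\textbf{The universal-nullity part is only a plan}, and it lacks the two ideas that carry the paper's proof.

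For the tail, you need neither a density argument nor any ``bounding-like'' preservation. In $V[G_\gamma]$, where $\mu$ is coded, choose $g$ with $\mu(\{x:x(n)>g(n)\})<2^{-n-1}$ for all $n$. By Borel--Cantelli, $\{x:x\le^*g\}$ is $\mu$-conull. Every $m_\xi$ with $\xi\ge\gamma$ is unbounded over $V[G_\xi]\ni g$, so it lies outside this set. This is the uniform form of your successor-thinning idea.

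For the preservation lemma, ``fusion with measure thinning'' does not say how to handle a new measure, and the paper does not argue stage by stage. It writes $\mu=\mu_{\mathrm{ac}}+\mu_{\mathrm s}$, with $\mu_{\mathrm{ac}}\ll\nu$ for some $\nu\in\MeC^V$ and $\mu_{\mathrm s}$ singular to every member of $\MeC^V$. This decomposition uses countable covering and a maximal family of mutually singular ground measures. A ground-model $\nu$-null hull handles $\mu_{\mathrm{ac}}$. The real work is showing that all of $(2^\omega)^V$ is $\mu_{\mathrm s}$-null, which needs three ingredients:
\begin{enumerate}
\item Zdomskyy's approximation of the name $\dot\mu$ by finitely many ground measures at each splitting node;
\item the absence of eventually different reals, which gives a ground-model $G_\delta$ set null for those measures but $\mu_{\mathrm s}$-conull;
\item a Borel--Cantelli argument along the splitting nodes of the first Miller real.
\end{enumerate}
Your proposal supplies none of this, so the key preservation step remains unproved.
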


    We use the standard Miller model obtained by a countable
    support iteration of length $\omega_2$ over a model of $\CH$.
    The main preservation result is that countable support
    iterations of Miller forcing of length at most $\omega_2$
    preserve ground model universally null sets
    (Corollary~\ref{cor:preservation}).  We then show that the set
    of Miller reals added at the individual stages is universally
    null (Theorem~\ref{thm:miller_reals_universally_null}).
    Adjoining a preserved nonmeager universally null set from the
    ground model gives the required example.

    We also consider two other aspects of universal nullity in
    forcing extensions.  In the model obtained by a countable
    support iteration of $\PTfg$, every universally null set is
    contained in the reals of some intermediate model and hence
    has cardinality at most $\aleph_1$
    (Theorem~\ref{thm:ptfg_no_large_un}).  The proof uses the
    results on polar forcings and measured extensions of Larson
    and Zapletal \cite{LarsonZapletal2023Polar}.  This model also
    lies outside the scope of the Brendle--Larson theorem: it
    satisfies
    \[
        \frakd=\aleph_1<\non(\meager)=\cof(\meager)=\frakc=\aleph_2,
    \]
    so again $\min\{\frakd,\non(\nul)\}<\cof(\meager)$.
    These values follow from the bounding and category properties
    of $\PTfg$ and its countable support iterations
    \cite[Section~2]{BartoszynskiJudahShelah1993Cichon}.

    Furthermore,
    universal nullity need not be preserved even by an
    $\omega_1$-preserving forcing: using a construction of
    Veli\v{c}kovi\'{c} and Woodin
    \cite{VelickovicWoodin1998Complexity}, we obtain a model with
    a universally null Luzin set that acquires a nonempty perfect
    subset in such an extension (Theorem~\ref{thm:destructible}).
    This answers affirmatively the existence part of a question
    left open in Larson's lecture slides of February~3, 2026
    \cite[p.~24]{Larson2026UniversallyMeasurableII}: whether a
    universally null set which is not universally meager can be
    made non-universally-null by forcing.

    For the cardinal invariants of $\UN$, our arguments draw on
    work on the strong measure zero ideal, particularly Yorioka's
    analysis of its cofinality \cite{yorioka2002cofinality} and
    the preservation methods of Brendle, Cardona, and Mej\'{\i}a
    \cite{brendle2025separating}.  We first establish a bound
    relating the size of an individual universally null set to
    the cofinality of the ideal.

    \begin{thm*}[Theorem~\ref{thm:lower_bound_of_cofinality}]
        For every $X\in\UN$, we have $|X|<\cof(\UN)$.
    \end{thm*}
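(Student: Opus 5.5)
The plan is a diagonalization against a cofinal family. Suppose toward a contradiction that $X\in\UN$ and $|X|\ge\kappa:=\cof(\UN)$. Fix a cofinal family $\{Y_\alpha:\alpha<\kappa\}\subseteq\UN$, and choose distinct points $x_\alpha\in X$ for $\alpha<\kappa$. Universal nullity is invariant under Borel isomorphisms, since pushing forward atomless finite Borel measures preserves atomlessness. So we may transfer everything to the plane $2^\omega\times2^\omega$ via a homeomorphism with $2^\omega$. There we regard the $Y_\alpha$ as universally null subsets of the plane and $X$ as a universally null subset of the first coordinate space. The goal is a function $g\colon\{x_\alpha:\alpha<\kappa\}\to2^\omega$ whose graph $Z=\{(x_\alpha,g(x_\alpha)):\alpha<\kappa\}$ is universally null but is contained in no $Y_\alpha$. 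That contradicts cofinality.

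\emph{Step 1: every graph over a universally null set is universally null.} Let $\mu$ be an atomless finite Borel measure on the plane and let $\nu=\pi_*\mu$ be its projection to the first coordinate. The measure $\nu$ may have atoms, but only countably many, say at points $a_n$. Write $\nu=\nu_0+\sum_n\nu(\{a_n\})\delta_{a_n}$ with $\nu_0$ atomless. Since $X$ is universally null, $\nu_0^*(X)=0$, so there is a Borel $B\supseteq X$ with $\nu_0(B)=0$; we may also assume $B$ contains no $a_n$ after removing countably many points. Then $\mu(\pi^{-1}(B\setminus\{a_n\}_n))=0$. For each $n$, the graph meets the fiber $\{a_n\}\times2^\omega$ in at most one point, which is $\mu$-null because $\mu$ is atomless. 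Hence $\mu^*(Z)=0$.

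\emph{Step 2: sections of universally null sets are universally null.} For $Y\subseteq2^\omega\times2^\omega$ in $\UN$ and any $x$, the section $Y_x=\{z:(x,z)\in Y\}$ is universally null. Indeed, any atomless measure $\lambda$ on $2^\omega$ transfers to the atomless measure $\delta_x\otimes\lambda$ on the plane, which gives $Y$ outer measure zero, and hence $\lambda^*(Y_x)=0$. In particular $Y_x\ne2^\omega$.

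\emph{Step 3: diagonalize.} For each $\alpha<\kappa$, Step 2 lets us pick $g(x_\alpha)\in2^\omega\setminus(Y_\alpha)_{x_\alpha}$; this is possible because the $x_\alpha$ are distinct, so each point is handled once. Then $(x_\alpha,g(x_\alpha))\in Z\setminus Y_\alpha$, so $Z\not\subseteq Y_\alpha$ for every $\alpha$. On the other hand, $Z\in\UN$ by Step 1, since $\{x_\alpha\}\subseteq X$ is universally null. This contradicts cofinality, so $|X|<\cof(\UN)$.

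The only delicate point is Step 1, specifically handling the atoms of the projected measure. It is resolved by the observation that the graph meets each fiber in a single point. Everything else is routine bookkeeping about transferring $\UN$ across homeomorphisms.
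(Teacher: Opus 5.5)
Your proof is correct and follows essentially the same route as the paper. The paper likewise shows that the graph of any function on $X$ is universally null, handling the countably many atoms of the projected measure through the single graph point on each atomic fiber, and then diagonalizes against an arbitrary family of size $|X|$ using the fiber over $x_\alpha$. Your Step 2 is a slightly stronger form of the paper's remark that a universally null set cannot contain the Cantor set $h[\{x_\alpha\}\times 2^\omega]$. One wording point in Step 1: you should keep $B\supseteq X$ and remove the atoms only inside $\pi^{-1}(B\setminus\{a_n\}_n)$, which is what your next sentence in fact does.
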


    This yields
    \[
        \frakc\leq\cof(\UN),\qquad
        \frakb<\cof(\UN),\qquad
        \non(\nul)<\cof(\UN).
    \]
    In particular, the existence of a universally null set of
    size continuum in the Miller model implies
    $\frakc<\cof(\UN)$ there.  Adapting Yorioka's argument, we
    also prove that $\add(\nul)=\frakc$ implies
    $\cof(\UN)=\frakd_{\frakc}$, where $\frakd_\kappa$ is the
    dominating number of $\kappa^\kappa$ under pointwise
    domination.  Finally, a refinement of the preservation
    argument of Brendle, Cardona, and Mej\'{\i}a gives the
    consistency of
    \[
        \add(\UN)<\cov(\UN)<\non(\UN)<\cof(\UN).
    \]

    Sections~\ref{sec:miller}--\ref{sec:destructible} contain
    the forcing results.  Section~\ref{sec:cardinal_invariants}
    treats the cardinal invariants, and
    Section~\ref{sec:discussion} lists some remaining questions.

    \section{Universally null sets of size continuum in the Miller model}\label{sec:miller}

We use the standard Miller model obtained by a countable support
iteration of Miller forcing of length $\omega_2$ over a ground
model satisfying $\CH$.  We prove that
\[
X=\{m_\alpha:\alpha<\omega_2\}
\]
is universally null, where $m_\alpha$ is the Miller real added at
stage $\alpha$ (Theorem~\ref{thm:miller_reals_universally_null}).
Adjoining a nonmeager universally null set from the ground model
yields the stated theorem (Corollary~\ref{cor:brendle_larson}).

The main step is preservation of universally null sets
(Corollary~\ref{cor:preservation}).  We decompose each new atomless
measure into a part absolutely continuous with respect to an old
atomless measure and a part singular to all old atomless measures.
Using Zdomskyy's approximation lemma
\cite{Zdomskyy2018MengerMiller}, we show that the singular part
makes all old reals null.  Universal nullity of $X$ then follows
by transfinite induction.

We write $\MeC$ for the set of atomless finite Borel measures on
$2^\omega$, and $\PrC$ for the space of all Borel probability
measures on $2^\omega$, equipped with the weak topology.
With this topology, $\PrC$ is a compact Polish space.
We use $\mu^*$ for outer measure.
For finite Borel measures $\mu,\nu$ on $2^\omega$, write
$\mu\ll\nu$ (\emph{absolute continuity}) if every $\nu$-null Borel
set is $\mu$-null, and $\mu\perp\nu$ (\emph{mutual singularity})
if some Borel set $B$ satisfies $\mu(2^\omega\setminus B)=\nu(B)=0$.

Ground model Borel sets, maps, and measures are reinterpreted in
extensions via their codes.
Absolute continuity and mutual singularity of ground model
measures are absolute between these models.

    \subsection{Lemmas}

    \begin{lem}\label{lem:decomposition}
        Let $W$ be a proper forcing extension of $V$.
        Every $\mu\in\MeC^W$ admits a decomposition
        \begin{equation}\label{eq:measure_decomposition}
            \mu=\mu_{\mathrm{ac}}+\mu_{\mathrm{s}},
        \end{equation}
        into finite Borel measures such that, for some $\nu\in\MeC^V$,
        \begin{equation}\label{eq:decomposition_properties}
            \mu_{\mathrm{ac}}\ll\nu,
            \qquad
            \mu_{\mathrm{s}}\perp\lambda
            \quad\text{for every }\lambda\in\MeC^V.
        \end{equation}
    \end{lem}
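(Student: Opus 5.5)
The plan is to work inside $W$ and take the Lebesgue decomposition of $\mu$ with respect to a suitably chosen ground model measure $\nu$. This $\nu$ should maximize the mass of the absolutely continuous part. Properness is used only to merge countably many ground model measures into a single one.

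For $\lambda\in\MeC^V$, write $\mu=\mu^\lambda_{\mathrm{ac}}+\mu^\lambda_{\mathrm{s}}$ for the Lebesgue decomposition of $\mu$ with respect to $\lambda$, computed in $W$: $\mu^\lambda_{\mathrm{ac}}\ll\lambda$ and $\mu^\lambda_{\mathrm{s}}\perp\lambda$. Let
\[
s=\sup\{\mu^\lambda_{\mathrm{ac}}(2^\omega):\lambda\in\MeC^V\}\le\mu(2^\omega).
\]
(If $\MeC^V$ were empty the statement would be vacuous, but it always contains the reinterpreted coin-tossing measure.) Choose $\lambda_n\in\MeC^V$ with $\mu^{\lambda_n}_{\mathrm{ac}}(2^\omega)\to s$.

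The set $\{\lambda_n:n<\omega\}$ is a countable subset of $V$ lying in $W$. Since $W$ is a proper extension, it is covered by a countable set $A\in V$, and we may assume $A\subseteq\MeC^V$. In $V$, enumerate $A=\{a_k:k<\omega\}$, discarding zero measures, and put
\[
\nu=\sum_k 2^{-k}\,a_k/a_k(2^\omega).
\]
This is a finite atomless Borel measure in $V$, and each $\lambda_n\ll\nu$. The remaining facts are routine and absolute via codes: the reinterpretation of $\nu$ in $W$ is the same sum of the reinterpreted $a_k$, so $\lambda_n\ll\nu$ still holds there. Hence $\mu^{\lambda_n}_{\mathrm{ac}}\le\mu^\nu_{\mathrm{ac}}$, because the absolutely continuous part with respect to $\nu$ dominates every part of $\mu$ that is absolutely continuous with respect to a measure $\ll\nu$. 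Therefore $\mu^\nu_{\mathrm{ac}}(2^\omega)=s$. Set $\mu_{\mathrm{ac}}=\mu^\nu_{\mathrm{ac}}$ and $\mu_{\mathrm{s}}=\mu^\nu_{\mathrm{s}}$.

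The main point is to show $\mu_{\mathrm{s}}\perp\lambda$ for every $\lambda\in\MeC^V$. Suppose not. Decompose $\mu_{\mathrm{s}}$ with respect to $\lambda$; its absolutely continuous part $\rho$ is nonzero, satisfies $\rho\le\mu_{\mathrm{s}}$, and $\rho\ll\lambda$. Consider $\nu+\lambda\in\MeC^V$. The measure $\mu_{\mathrm{ac}}+\rho$ is $\le\mu$ and $\ll\nu+\lambda$. By maximality of the Lebesgue absolutely continuous part, $\mu^{\nu+\lambda}_{\mathrm{ac}}(2^\omega)\ge s+\rho(2^\omega)>s$, which contradicts the definition of $s$.

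The step needing most care is the merging step. Without properness, a countable family of ground model measures need not be dominated by a single ground model measure; this is exactly where the hypothesis enters. Everything else is standard Lebesgue decomposition together with the fact that $\mu^\lambda_{\mathrm{ac}}$ is the largest measure $\le\mu$ that is absolutely continuous with respect to $\lambda$.
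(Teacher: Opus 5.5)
Your argument is correct, and it takes a different route from the paper.

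\textbf{The paper's route.} The paper fixes in $V$ a maximal family $\mathcal A$ of pairwise mutually singular atomless probability measures. It shows that in $W$ only countably many members of $\mathcal A$ fail to be singular to $\mu$, because their Radon--Nikodym densities relative to $\mu$ have essentially disjoint supports. It covers these by a countable $B\in V$ and takes $\nu$ to be a convex combination of $B$. It then needs a second argument, carried out in $V$: by maximality of $\mathcal A$, every $\eta\in\MeC^V$ is absolutely continuous with respect to a countable combination of members of $\mathcal A$. This step transfers the singularity of $\mu_{\mathrm s}$ from $\mathcal A$ to all of $\MeC^V$.

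\textbf{Your route.} You run an exhaustion argument in $W$. You maximize $\mu^\lambda_{\mathrm{ac}}(2^\omega)$ over $\lambda\in\MeC^V$ and use countable covering once to realize the supremum by a single ground-model $\nu$. Singularity to every ground-model $\lambda$ then follows directly from maximality, since otherwise $\nu+\lambda$ would exceed the supremum.

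\textbf{Comparison.} Your version avoids the maximal antichain and the transfer step. It also isolates exactly what is used about the extension: $\MeC^V$ remains $\sigma$-directed under $\ll$ in $W$. The paper's version gives slightly more structural information. There, $\nu$ is built from a fixed family in $V$ chosen independently of $\mu$, and only countably many members of that family matter for a given $\mu$. Both proofs use properness only through the countable covering property.
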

    \begin{proof}
        In $V$, fix a maximal family $\mathcal A$ of pairwise mutually
        singular atomless probability measures.  In $W$, let
        \[
            S=\{\lambda\in\mathcal A:\mu\not\perp\lambda\}.
        \]
        For each $\lambda\in S$, let $f_\lambda$ be the
        Radon--Nikodym derivative of the absolutely continuous part
        of $\lambda$ with respect to $\mu$.
        The sets $\{f_\lambda>0\}$ have positive $\mu$-measure and
        are pairwise disjoint modulo $\mu$-null sets, since the
        measures in $\mathcal A$ are mutually singular.
        Thus $S$ is countable.

        By the countable covering property of proper forcing, there is a
        set $B\in V$, countable in $V$, such that
        $S\subseteq B\subseteq\mathcal A$.  We may assume $B\ne\emptyset$.
        In $V$, choose a convex combination $\nu$ of the members of
        $B$ with all coefficients positive.  In $W$, take the Lebesgue
        decomposition $\mu=\mu_{\mathrm{ac}}+\mu_{\mathrm{s}}$ with
        respect to $\nu$.  Then
        $\mu_{\mathrm{ac}}\ll\nu$ and $\mu_{\mathrm{s}}\perp\nu$.

        For $\lambda\in B$, we have $\lambda\ll\nu$, whereas for
        $\lambda\in\mathcal A\setminus B$ we have $\mu\perp\lambda$.
        Hence $\mu_{\mathrm{s}}\perp\lambda$ for every
        $\lambda\in\mathcal A$.

        Fix $\eta\in\MeC^V$.  Working in $V$, the same argument shows
        that
        \[
            \mathcal A_\eta
            =\{\lambda\in\mathcal A:\eta\not\perp\lambda\}
        \]
        is countable.  Choose positive coefficients $c_\lambda$
        with $\sum_{\lambda\in\mathcal A_\eta}c_\lambda<\infty$, and put
        $\rho=\sum_{\lambda\in\mathcal A_\eta}c_\lambda\lambda$
        (with $\rho=0$ if $\mathcal A_\eta=\emptyset$).
        Take the Lebesgue decomposition
        $\eta=\eta_{\mathrm{ac}}+\eta_{\mathrm{s}}$ with respect to $\rho$.

        As above, $\eta_{\mathrm{s}}$ is singular to every member of
        $\mathcal A$.  It is atomless, so maximality of $\mathcal A$
        implies $\eta_{\mathrm{s}}=0$.  Thus $\eta\ll\rho$, also
        after reinterpretation in $W$.

        In $W$, the measure $\mu_{\mathrm{s}}$ is singular to each
        summand of the countable sum defining $\rho$, hence to $\rho$.
        Since $\eta\ll\rho$, we obtain $\mu_{\mathrm{s}}\perp\eta$.
    \end{proof}

    \begin{lem}\label{lem:conseq_of_adding_no_eventually_different_reals}
        Let $W\supseteq V$ add no real eventually different from every
        member of $(\omega^\omega)^V$.
        Let $\nu\in V$ and $\mu\in W$ be finite Borel measures on
        $2^\omega$, and suppose $\nu\perp\mu$.
        Then there is a $G_\delta$ set $D$ coded in $V$ such that
        $\nu(D)=0$ and $\mu(D)=\mu(2^\omega)$.
    \end{lem}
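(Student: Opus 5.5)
We will exploit the standard combinatorial consequence of adding no eventually different real: every real in $W$ is, along infinitely many coordinates, predicted by some ground model function. We will use it in the following slalom-type form. For every $x\in(\omega^\omega)^W$ there is $y\in(\omega^\omega)^V$ with $x(n)=y(n)$ for infinitely many $n$. Indeed, otherwise $x$ would be eventually different from every ground model real. We will also use the refinement that the ground model real can be taken to code a sequence of finite objects from a fixed countable set.

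In $W$, since $\nu\perp\mu$, fix a Borel set $B$ with $\mu(2^\omega\setminus B)=0$ and $\nu(B)=0$. By regularity of $\nu$ (computed in $W$; $\nu$ is a ground model measure reinterpreted), for each $n$ there is an open set $U_n\supseteq B$ with $\nu(U_n)<2^{-n}$. By inner regularity of $\mu$ there is also a compact $K_n\subseteq B$ with $\mu(K_n)>\mu(2^\omega)-2^{-n}$. By compactness we may shrink $U_n$ to a finite union of basic clopen sets $C_n\supseteq K_n$ with $\nu(C_n)<2^{-n}$. Enumerate the countably many clopen subsets of $2^\omega$ in $V$ and let $x(n)$ be the index of $C_n$, so that $x\in\omega^\omega\cap W$.

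Apply the hypothesis to $x$. We need more than one guess, so we first group coordinates. Fix in $V$ a partition of $\omega$ into consecutive intervals $I_k$ with $|I_k|$ growing, and let $x'(k)=\langle C_n:n\in I_k\rangle$, coded as a natural number. We get $y\in V$ with $x'(k)=y(k)$ for infinitely many $k$. In $V$, define $E_k$ to be the union over $n\in I_k$ of those sets in the tuple coded by $y(k)$ whose index $n$ satisfies $\nu(\text{set})<2^{-n}$; discard the entries that fail this bound. Then $\nu(E_k)\le\sum_{n\in I_k}2^{-n}$, and $E_k$ is clopen and lies in $V$.

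Set
\[
D=\bigcap_{j}\bigcup_{k\ge j}E_k ,
\]
which is $G_\delta$ and coded in $V$. Since $\sum_k\nu(E_k)<\infty$, Borel--Cantelli gives $\nu(D)=0$, and this is absolute. On the infinitely many $k$ where $y$ guesses correctly, $E_k\supseteq C_n\supseteq K_n$ for $n\in I_k$. Hence $\bigcup_{k\ge j}E_k\supseteq K_n$ for arbitrarily large $n$, so $\mu\bigl(\bigcup_{k\ge j}E_k\bigr)=\mu(2^\omega)$ for each $j$. Therefore $\mu(D)=\mu(2^\omega)$.

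The main obstacle is this last step: we must get correct guesses infinitely often while keeping the union small. The discarding rule handles the smallness, since it is decidable in $V$ because $\nu\in V$. The infinitely-often matching then gives full $\mu$-measure in the limsup. The grouping into intervals $I_k$ is not strictly needed for this; single coordinates already work.
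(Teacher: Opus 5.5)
Your proof is correct and takes essentially the same approach as the paper. You code clopen sets of small $\nu$-measure and nearly full $\mu$-measure as a real in $W$, and predict that real infinitely often by a ground model real. You then discard, inside $V$, the guesses that violate the $\nu$-bound, and take the $\limsup$, which is $\nu$-null by Borel--Cantelli and $\mu$-full by continuity from above. The interval grouping is superfluous, as you note yourself, and the paper works coordinatewise.
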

    \begin{proof}
        Fix in $V$ an enumeration $\seq{A_i:i\in\omega}$ of the
        clopen subsets of $2^\omega$.  Put $\varepsilon_j=2^{-j-1}$
        and $M=\mu(2^\omega)$.  By regularity and singularity, choose
        $r\in(\omega^\omega)^W$ such that, for every $j\in\omega$,
        \[
            \mu(A_{r(j)})>M-\varepsilon_j,
            \qquad \nu(A_{r(j)})<\varepsilon_j.
        \]
        Choose $f\in(\omega^\omega)^V$ with $f(j)=r(j)$ for
        infinitely many $j$.  In $V$, define
        \[
            B_j=\begin{cases}
                A_{f(j)} & \text{if }\nu(A_{f(j)})<\varepsilon_j,\\
                \emptyset & \text{otherwise},
            \end{cases}
            \qquad D=\bigcap_k\bigcup_{j\geq k}B_j.
        \]
        Since $\nu(\bigcup_{j\geq k}B_j)
        \leq\sum_{j\geq k}\varepsilon_j\to0$, we have $\nu(D)=0$.
        For each $k$, arbitrarily large $j\geq k$ satisfy
        $B_j=A_{r(j)}$, so $\mu(\bigcup_{j\geq k}B_j)=M$.
        Continuity from above gives $\mu(D)=M$.
    \end{proof}

    Miller forcing $\Miller$ consists of nonempty trees
    $T\subseteq\omega^{<\omega}$ of strictly increasing sequences,
    such that every node has a splitting extension and every splitting
    node has infinitely many immediate successors.  The order is inclusion.
    Write $\splt(T)$ for the splitting nodes of $T$ and
    \[
        \Lev(s,T)=|\{u\in\splt(T):u\subsetneq s\}|,
        \qquad T_t=\{u\in T:u\subseteq t\text{ or }t\subseteq u\}.
    \]
    Let $\Miller_\delta$ denote the countable support iteration of
    Miller forcing of length $\delta$.  For $p\in\Miller_\delta$
    and $t\in p(0)$, set
    $p_t=(p(0))_t\append p\restrict[1,\delta)$.

    We use the following consequence of
    \cite[Lemma 2.3]{Zdomskyy2018MengerMiller} for compact metric spaces.

    \begin{lem}\label{lem:zdomskyyslemma}
        Let $0<\delta\leq\omega_2$, let $K$ be a nonempty compact
        metric space coded in $V$, and let $\dot x$ be a
        $\Miller_\delta$-name for a point of $K$.
        For every $p\in\Miller_\delta$, there are $q\leq p$ and
        finite nonempty sets $U_s\subseteq K^V$, for
        $s\in\splt(q(0))$, such that for every open set $O\supseteq U_s$
        coded in $V$, all but finitely many immediate successors $t$
        of $s$ in $q(0)$ satisfy
        \[
            q_t\forces\dot x\in O.
        \]
    \end{lem}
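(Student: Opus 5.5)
The plan is to derive the statement from Zdomskyy's Lemma 2.3 in \cite{Zdomskyy2018MengerMiller}, which I take in its usual form. For a $\Miller_\delta$-name $\dot y$ for an element of $\omega^\omega$ (or of a compact space coded in $V$) and a condition $p$, that lemma gives the following data:
\begin{itemize}
\item a condition $q\leq p$;
\item for each $s\in\splt(q(0))$, a finite set $U_s$ of ground-model points (or finitely many ground-model reals);
\item the property that for every neighbourhood of $U_s$, all but finitely many immediate successors $t$ of $s$ in $q(0)$ have $q_t$ forcing $\dot y$ into that neighbourhood.
\end{itemize}
The work is therefore to transfer this from the setting used by Zdomskyy to a general compact metric space $K$ coded in $V$.

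\textbf{Step 1: reduce to $2^\omega$.} In $V$, fix a continuous surjection $\pi\colon 2^\omega\to K$, which exists since $K$ is nonempty compact metric. In the extension, $\pi^{-1}(\dot x)$ is a nonempty closed set. Use a continuous selector, for instance the lexicographically least point, to obtain a Borel name $\dot z$ for a point of $2^\omega$ with $\pi(\dot z)=\dot x$. For $2^\omega$, or equivalently for $\omega^\omega$ followed by a coding map, Zdomskyy's lemma applies directly and yields $q\leq p$ together with finite sets $U'_s\subseteq(2^\omega)^V$.

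\textbf{Step 2: push forward.} Put $U_s=\pi[U'_s]$, a finite nonempty subset of $K^V$. Let $O\supseteq U_s$ be open and coded in $V$. Then $\pi^{-1}[O]$ is an open set coded in $V$ containing $U'_s$. By compactness of $U'_s$, it contains a finite union of basic clopen neighbourhoods of the points of $U'_s$. Applying the conclusion for $\dot z$, all but finitely many successors $t$ satisfy $q_t\forces\dot z\in\pi^{-1}[O]$, hence $q_t\forces\dot x\in O$. Everything here is absolute because $\pi$ and $O$ are coded in $V$.

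\textbf{Main obstacle.} The delicate point is matching the exact formulation of Zdomskyy's Lemma 2.3. His lemma is phrased in terms of $\omega^\omega$-names and the fusion structure of $q(0)$, with the convergence conclusion at the successors of each splitting node. If his version only gives convergence to a single point along infinitely many successors, rather than to a finite set along cofinitely many, I would repair this as follows:
\begin{itemize}
\item Inside a fusion argument on $q(0)$, use compactness of $K$ to thin the successors of each $s$ so that the values forced by $q_t$ on an initial segment of length $n$, where $n$ is the level of $s$, converge.
\item Diagonalize over levels so that for each $s$ the successor values converge to finitely many limit points.
\item Take $U_s$ to be this set of limit points. The limits lie in $V$ because the approximations $q_t$ decide ground-model finite information.
\end{itemize}
Both the thinning and the diagonalization are a standard Miller-fusion bookkeeping exercise.
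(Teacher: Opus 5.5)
Your Steps~1--2 are the paper's argument: fix in $V$ a continuous surjection from the Cantor set onto $K$, apply Zdomskyy's Lemma~2.3 to a name for a preimage of $\dot x$, and push the finite sets forward by continuity. The paper also spells out bookkeeping you gloss over: Zdomskyy's lemma is for sequences of $\Miller_{\omega_2}$-names with finite sets $A_s\subseteq\R$, so one applies it to a constant sequence, works with the Cantor set $C\subseteq\R$ and the nonempty sets $A_s\cap C$, and handles $\delta<\omega_2$ by restricting $q$ from the full iteration; your fallback fusion sketch is unnecessary, and as written it would not work for the iteration anyway, since $q_t$ need not decide any initial segment of $\dot x$.
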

    \begin{proof}
        First let $\delta=\omega_2$.  In $V$, choose a continuous
        surjection $F:C\to K$, where $C\subseteq\R$ is the Cantor
        set, and choose a name $\dot z\in C$ with $F(\dot z)=\dot x$.
        Apply \cite[Lemma 2.3]{Zdomskyy2018MengerMiller} to the
        constant sequence $\seq{\dot z:i\in\omega}$, obtaining
        $q\leq p$ and finite sets $A_s\subseteq\R$ such that
        \[
            q_t\forces\exists a\in A_s\ (|\dot z-a|<\varepsilon)
        \]
        for all but finitely many successors $t$ of $s$, for each
        $\varepsilon>0$.  Since $C$ is closed and $A_s$ is finite,
        $A_s\cap C$ is nonempty and has the same approximation
        property.  Set $U_s=F[A_s\cap C]$ and apply continuity of $F$.

        For $\delta<\omega_2$, apply this result in the longer
        iteration and restrict $q$ to $\delta$, using completeness of
        the initial subiteration and absoluteness of membership in
        coded open sets.
    \end{proof}

    \begin{lem}\label{lem:singularity_implies_nullity_of_ground_model}
        Let $0<\delta\leq\omega_2$, $p\in\Miller_\delta$, and
        let $\dot\mu$ be a $\Miller_\delta$-name for a member of
        $\MeC$.  If $p$ forces that $\dot\mu$ is singular to every
        member of $\MeC^V$, then
        \[
            p\forces\dot\mu^*((2^\omega)^V)=0.
        \]
    \end{lem}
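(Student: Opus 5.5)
The plan is to argue by contradiction and to exploit the fact that, along the Miller generic branch, $\dot\mu$ is approximated by ground-model measures. The delicate part, converting this approximation into a $\mu$-null cover of $(2^\omega)^V$ built from ground-model sets, is not settled below.

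Suppose some $p'\le p$ and $\varepsilon>0$ satisfy $p'\forces\dot\mu^*((2^\omega)^V)>\varepsilon$. After normalising we may regard $\dot\mu$ as a name for a point of the compact metric space $K=\PrC$, which is coded in $V$. Lemma~\ref{lem:zdomskyyslemma} then gives $q\le p'$ and finite nonempty sets $U_s\subseteq\PrC^V$ for $s\in\splt(q(0))$ with the stated approximation property. Let $\Lambda=\bigcup_s U_s$. This is a countable set in $V$, and its members may have atoms.

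Next, in $V$, I form $\rho=\sum_{\sigma\in\Lambda}c_\sigma\sigma$ with positive summable coefficients. I decompose $\rho=\rho_{\mathrm{at}}+\rho_{\mathrm c}$ into a purely atomic part and an atomless part, so that $\rho_{\mathrm c}\in\MeC^V$. The atoms form a countable set $A\in V$, and $\mu(A)=0$ because $\mu$ is atomless. By hypothesis $\mu\perp\rho_{\mathrm c}$. Since Miller iterations add no real eventually different from all ground reals, Lemma~\ref{lem:conseq_of_adding_no_eventually_different_reals} applies. It yields a $G_\delta$ set $D$ coded in $V$ with $\rho_{\mathrm c}(D)=0$ and $\mu(2^\omega\setminus D)=0$. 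Then $E=D\setminus A$ is a ground-coded Borel set with $\sigma(E)=0$ for every $\sigma\in\Lambda$, and $\mu$ is concentrated on $E$. The remaining task is to show $\mu^*(E\cap V)=0$.

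For this I first prune $q$ diagonally. Fix in $V$ a countable base of clopen sets. For each $s\in\splt(q(0))$ and each $n$, choose a ground open $O^s_n\supseteq E$ with $\sum_{\sigma\in U_s}\sigma(O^s_n)<2^{-n}$, and write $O^s_n$ as an increasing union of clopens $F^s_{n,j}$. Every $\sigma\in U_s$ gives these clopen sets measure below $2^{-n}$, and that is an open condition in the weak topology. Hence by the conclusion of Lemma~\ref{lem:zdomskyyslemma}, almost every successor $t$ of $s$ has $q_t\forces\dot\mu(F^s_{n,j})<2^{-n}$. I keep the $k$-th successor of $s$ only if this holds for all $n,j\le k$. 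Since the Miller real is unbounded, along the generic branch the successor indices are unbounded infinitely often. This gives $\mu(O^{s_i}_{n})\le 2^{-n}$ for infinitely many splitting nodes $s_i$ on the branch and each fixed $n$, because open sets are unions of the clopens $F^{s_i}_{n,j}$.

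The main obstacle is passing from sets that vary with $s_i$ to a single cover. I plan to use the sets $Z_n=\bigcup_{i\ge n}O^{s_i}_{n+i}$, or a similar choice with summable bounds. Each $Z_n$ contains $E$ and satisfies $\mu(Z_n)\le\sum_i 2^{-n-i}$, after I arrange in the pruning that the bound for $O^{s_i}_{m}$ holds for all $m\le$ the successor index. This would give $\mu(E)=0$, contradicting the assumption that $\mu$ is concentrated on $E$ with positive outer measure on ground reals. The bookkeeping needed to make the bounds $\mu(O^{s_i}_m)\le2^{-m}$ hold simultaneously for the relevant $m$, and not only for infinitely many $i$, is where I expect the real difficulty. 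If it fails, my first fallback is to replace the Miller real by the fusion-generated sequence of all splitting nodes and to use $\rho$-smallness uniformly via the single ground measure $\rho$, choosing $O_n\supseteq E$ with $\rho(O_n)<2^{-n}\min_{\sigma\in U_{s}}c_\sigma$ level by level.
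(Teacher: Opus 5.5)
Your opening matches the paper's: Lemma~\ref{lem:zdomskyyslemma} applied to $\PrC$, a positive combination of $\bigcup_s U_s$, and Lemma~\ref{lem:conseq_of_adding_no_eventually_different_reals} to get a ground-coded $G_\delta$ set that carries all of $\mu$ and is null for every member of $\bigcup_s U_s$. Removing the atom set $A$ is a fine substitute for the paper's remark that an atomless $\mu$ is singular to purely atomic measures.

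The gap comes next, and it is not bookkeeping. Your claim that the pruning yields $\mu(O^{s_i}_n)\le 2^{-n}$ is false. Each $O^s_n$ is an open set containing $E$, and $\mu(E)=\mu(2^\omega)$, so $\mu(O^s_n)$ is always full. The approximation from Lemma~\ref{lem:zdomskyyslemma} controls $\dot\mu$ only on finitely many fixed clopen sets at a time, because $\eta\mapsto\eta(O)$ is merely lower semicontinuous for open $O$. Along the branch you learn $\mu(F^{s_i}_{n,j})<2^{-n}$ only for $j$ up to the successor index, never for the whole union. For the same reason your sets $Z_n$, and the fallback sets $O_n\supseteq E$ of small $\rho$-measure, all have full $\mu$-measure. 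Any cover built from ground-coded supersets of $E$ fails, since it contains $E$. Your planned contradiction would refute $\mu(E)=1$ without ever using $\mu^*((2^\omega)^V)>\varepsilon$, which is another sign it cannot work. Separately, keeping the $k$-th successor only when all pairs $n,j\le k$ are good may delete every successor; you should record the largest good depth instead.

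What is missing is a cover chosen by the generic from clopen truncations, combined with the finite entry depths of individual ground-model points. The paper does the following:
\begin{enumerate}
\item For each $s$ it fixes one open $O_s\supseteq D$ with $\lambda(O_s)<\varepsilon_s=2^{-\Lev(s,T)-4}$ for $\lambda\in U_s$, and an increasing clopen exhaustion $\seq{K_{s,k}:k\in\omega}$ of $O_s$.
\item It sets $j_s(t_i^s)=\max\{k\le i: r_{t_i^s}\forces\dot\mu(K_{s,k})<2\varepsilon_s\}$, which tends to infinity.
\item Along the first Miller real, with splitting nodes $s_n$ and successors $t_n$, this gives $\sum_n\mu(K_{s_n,j_{s_n}(t_n)})<\infty$. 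By Borel--Cantelli, $\limsup_n K_{s_n,j_{s_n}(t_n)}$ is $\mu$-null.
\item For $x\in D\cap V$, the entry function $h_x(s)=\min\{k:x\in K_{s,k}\}$ lies in $V$. Deleting the finitely many successors $t$ with $j_s(t)<h_x(s)$ at each splitting node shows, by density, that $x\in K_{s_n,j_{s_n}(t_n)}$ for infinitely many $n$.
\end{enumerate}
Hence $(2^\omega\setminus D)\cup\limsup_n K_{s_n,j_{s_n}(t_n)}$ is a $\mu$-null Borel cover of $(2^\omega)^V$. Its code lies in $V[m_0]$, not in $V$. Without generic-dependent truncation depths and the ground-model entry functions $h_x$, your argument does not reach the conclusion.
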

    \begin{proof}
        By density, it suffices to find a condition below $p$ forcing the
        conclusion.  Strengthening $p$ and normalizing $\dot\mu$, we
        may assume $p\forces\dot\mu(2^\omega)=1$, since the zero
        measure is trivial.  An atomless measure is singular to every
        ground model purely atomic measure.  Decomposing each ground
        model probability measure into its atomless and atomic parts
        therefore gives
        \[
            p\forces\dot\mu\perp\nu
            \qquad\text{for every }\nu\in\PrC^V.
        \]

        Apply Lemma~\ref{lem:zdomskyyslemma} to the compact space
        $\PrC$ and the name $\dot\mu$.  We obtain $q\leq p$ and
        finite nonempty sets $U_s\subseteq\PrC^V$, for
        $s\in\splt(q(0))$, such that for every open set $O\supseteq U_s$
        coded in $V$, all but finitely many immediate successors $t$
        of $s$ satisfy
        \begin{equation}\label{eq:measure_approximation}
            q_t\forces\dot\mu\in O.
        \end{equation}

        In $V$, choose a convex combination $\nu$ of the members of
        $\bigcup_{s\in\splt(q(0))}U_s$ with all coefficients positive.
        Miller iterations preserve nonmeager sets
        \cite[Theorem 10(3)]{Bartoszynski2002PerfectlyMeager}, and hence
        add no eventually different real: such a real would place
        $(\omega^\omega)^V$ in the meager set of reals eventually
        different from it.
        By Lemma~\ref{lem:conseq_of_adding_no_eventually_different_reals},
        we can choose $r\leq q$ and a $G_\delta$ set $D$ coded in $V$
        such that
        \begin{equation}\label{eq:ground_model_full_measure_set}
            \nu(D)=0,
            \qquad r\forces\dot\mu(D)=1.
        \end{equation}
        Put $T=r(0)$.  For $s\in\splt(T)$, we have
        $\lambda(D)=0$ for every $\lambda\in U_s$.
        Moreover, \eqref{eq:measure_approximation} holds with $r$ in
        place of $q$, since $\splt(T)\subseteq\splt(q(0))$ and
        $r_t\leq q_t$.

        For each $s\in\splt(T)$, put
        $\varepsilon_s=2^{-\Lev(s,T)-4}$.
        By outer regularity and finiteness of $U_s$, choose in $V$ an
        open set $O_s\supseteq D$ such that
        $\lambda(O_s)<\varepsilon_s$ for every $\lambda\in U_s$.
        Choose an increasing sequence of clopen sets
        $\seq{K_{s,k}:k\in\omega}$ with $K_{s,0}=\emptyset$ and
        $\bigcup_k K_{s,k}=O_s$.
        For each fixed $k$, the set
        \[
            W_{s,k}=\{\eta\in\PrC:
                \eta(K_{s,k})<2\varepsilon_s\}
        \]
        is open and contains $U_s$, since evaluation on clopen sets is
        continuous.  Thus all but finitely many immediate successors
        $t$ of $s$ in $T$ satisfy
        $r_t\forces\dot\mu(K_{s,k})<2\varepsilon_s$.

        Enumerate all immediate successors of $s$ in $T$ without
        repetition as $\seq{t_i^s:i\in\omega}$, and define in $V$
        \[
            j_s(t_i^s)=\max\{k\leq i:
                r_{t_i^s}\forces\dot\mu(K_{s,k})<2\varepsilon_s\}.
        \]
        The value $k=0$ is always admissible, and each fixed $k$ is
        admissible for all sufficiently large $i$.  Hence
        $j_s(t_i^s)\to\infty$, and
        \begin{equation}\label{eq:selected_clopen_bound}
            r_t\forces
                \dot\mu(K_{s,j_s(t)})<2\varepsilon_s.
        \end{equation}
        Here $t$ ranges over all immediate successors of $s$ in $T$.

        Let $\dot m_0$ be the first Miller real.  In a generic extension
        containing $r$, enumerate the splitting nodes of $T$ on $m_0$
        in increasing order as $s_0,s_1,\ldots$, and put
        $t_n=m_0\restrict(|s_n|+1)$.
        Let $\dot E$ name the Borel set
        \[
            E_{m_0}=D\cap\bigcup_{N\in\omega}\bigcap_{n\geq N}
                \bigl(2^\omega\setminus K_{s_n,j_{s_n}(t_n)}\bigr).
        \]
        Its code belongs to $V[m_0]$.  Since $r_{t_n}$ belongs to the
        generic filter,
        \eqref{eq:selected_clopen_bound} and $\Lev(s_n,T)=n$ give
        \[
            \sum_{n\in\omega}\mu(K_{s_n,j_{s_n}(t_n)})
            \leq\sum_{n\in\omega}2\varepsilon_{s_n}
            =\sum_{n\in\omega}2^{-n-3}<\infty.
        \]
        The Borel--Cantelli lemma and
        \eqref{eq:ground_model_full_measure_set} yield
        $r\forces\dot\mu(\dot E)=1$.

        Fix $x\in D\cap V$ and define in $V$
        \[
            h_x(s)=\min\{k:x\in K_{s,k}\}
            \qquad(s\in\splt(T)).
        \]
        This is defined since $x\in D\subseteq O_s$.  At each $s$,
        only finitely many successors $t$ satisfy $j_s(t)<h_x(s)$.
        Given any $R\leq T$, delete these successors and their cones
        at each splitting node of $R$.  The remaining tree $R'\leq R$
        is a Miller tree, and every branch through $R'$ satisfies
        \[
            x\in K_{s_n,j_{s_n}(t_n)}
            \qquad\text{for infinitely many }n.
        \]
        Thus, by density in the first coordinate,
        $r\forces x\notin\dot E$.  Points outside $D$ are excluded
        by definition, so
        \[
            r\forces\dot E\cap(2^\omega)^V=\emptyset.
        \]
        Hence $2^\omega\setminus\dot E$ is a Borel null cover of
        $(2^\omega)^V$, also for the original measure.
    \end{proof}

    \subsection{Conclusion}

    \begin{cor}\label{cor:preservation}
        Countable support iterations of Miller forcing of length at
        most $\omega_2$ preserve ground model universally null sets.
    \end{cor}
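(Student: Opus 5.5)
Let $0<\delta\leq\omega_2$, let $X\in\UN^V$, and let $G$ be $\Miller_\delta$-generic over $V$; write $W=V[G]$. The plan is to fix an arbitrary $\mu\in\MeC^W$ and show $\mu^*(X)=0$ in $W$. The argument splits $\mu$ into its two parts from Lemma~\ref{lem:decomposition} and treats each part with a different earlier result.

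\emph{Decomposition.} Countable support iterations of Miller forcing are proper, so Lemma~\ref{lem:decomposition} applies. It gives $\mu=\mu_{\mathrm{ac}}+\mu_{\mathrm{s}}$ together with some $\nu\in\MeC^V$ such that $\mu_{\mathrm{ac}}\ll\nu$ and $\mu_{\mathrm{s}}$ is singular to every member of $\MeC^V$. Since outer measure is subadditive, $\mu^*(X)\leq\mu_{\mathrm{ac}}^*(X)+\mu_{\mathrm{s}}^*(X)$, so it suffices to show both terms vanish.

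\emph{Absolutely continuous part.} In $V$ the set $X$ is universally null and $\nu$ is atomless, so there is a Borel set $B$ coded in $V$ with $X\subseteq B$ and $\nu(B)=0$. The statement $\nu(B)=0$ is an arithmetic/analytic property of the codes, hence absolute, so it still holds in $W$. The inclusion $X\subseteq B$ persists because $X$ consists of ground-model reals and membership of a ground-model real in a ground-model-coded Borel set is absolute. Then $\mu_{\mathrm{ac}}\ll\nu$ gives $\mu_{\mathrm{ac}}(B)=0$, so $\mu_{\mathrm{ac}}^*(X)=0$.

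\emph{Singular part.} This is where the real work has already been done, in Lemma~\ref{lem:singularity_implies_nullity_of_ground_model}. If $\mu_{\mathrm{s}}=0$ there is nothing to do. Otherwise $\mu_{\mathrm{s}}$ is a nonzero finite measure, and it is atomless because $\mu_{\mathrm{s}}\leq\mu$ and $\mu$ is atomless. Choose a name $\dot\mu_{\mathrm{s}}$ and a condition $p\in G$ forcing that $\dot\mu_{\mathrm{s}}\in\MeC$ and that $\dot\mu_{\mathrm{s}}$ is singular to every member of $\MeC^V$. The lemma then gives $p\forces\dot\mu_{\mathrm{s}}^*((2^\omega)^V)=0$. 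Since $X\subseteq(2^\omega)^V$, we get $\mu_{\mathrm{s}}^*(X)=0$.

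Combining the two parts yields $\mu^*(X)=0$ for every $\mu\in\MeC^W$, so $X$ is universally null in $W$. The only delicate step is the bookkeeping in the singular case, namely choosing names so that the lemma's hypothesis is forced by a condition in $G$. The substantive difficulty was already absorbed by Lemma~\ref{lem:singularity_implies_nullity_of_ground_model}.
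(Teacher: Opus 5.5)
Your proof is correct and follows the paper's route. You decompose $\mu$ via Lemma~\ref{lem:decomposition}, cover $X$ by a ground-model $\nu$-null Borel set for the absolutely continuous part, and apply Lemma~\ref{lem:singularity_implies_nullity_of_ground_model} to the singular part; your explicit choice of a name and a condition in $G$ just spells out a step the paper leaves implicit, and your subadditivity bound is the paper's estimate $\mu(B\cap C)\leq\mu_{\mathrm{ac}}(B)+\mu_{\mathrm{s}}(C)=0$.
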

    \begin{proof}
        The iteration of length zero is trivial.  Let $W=V[G]$ be
        an extension by a nontrivial iteration, let
        $Y\subseteq2^\omega$ be universally null in $V$, and fix
        $\mu\in\MeC^W$.  By properness and
        Lemma~\ref{lem:decomposition}, write
        \[
            \mu=\mu_{\mathrm{ac}}+\mu_{\mathrm{s}},
            \qquad \mu_{\mathrm{ac}}\ll\nu
            \quad\text{for some }\nu\in\MeC^V,
        \]
        where $\mu_{\mathrm{s}}$ is singular to every member of
        $\MeC^V$.  In $V$, choose a Borel set $B\supseteq Y$ with
        $\nu(B)=0$.  In $W$, we have $\mu_{\mathrm{ac}}(B)=0$.

        Since $\mu_{\mathrm{s}}\leq\mu$ is atomless,
        Lemma~\ref{lem:singularity_implies_nullity_of_ground_model}
        yields a Borel set $C$ in $W$ such that
        \[
            (2^\omega)^V\subseteq C,
            \qquad \mu_{\mathrm{s}}(C)=0.
        \]
        Then $Y\subseteq B\cap C$ and
        \[
            \mu(B\cap C)\leq\mu_{\mathrm{ac}}(B)+\mu_{\mathrm{s}}(C)=0.
        \]
    \end{proof}

    Universal nullity is invariant under Borel isomorphisms.  Fixing
    such an isomorphism between $2^\omega$ and $\omega^\omega$ in $V$,
    Corollary~\ref{cor:preservation} also applies to subsets of
    $\omega^\omega$.  It applies to tails of the iteration over the
    corresponding intermediate models as well.

    \begin{thm}\label{thm:miller_reals_universally_null}
        In the Miller model, the set of Miller reals added at the
        individual stages is universally null.
    \end{thm}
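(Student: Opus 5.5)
We will show that in $V[G_{\omega_2}]$ every $\mu\in\MeC$ gives $X=\{m_\alpha:\alpha<\omega_2\}$ outer measure zero. The plan is to split $X$ into an initial segment that lies in an intermediate model and a tail that the measure does not see.

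First we localize the measure. Fix $\mu\in\MeC$ in the final model. A measure on $\omega^\omega$ (or on $2^\omega$ via the fixed ground model Borel isomorphism) is coded by a real. By properness and the $\aleph_2$-c.c., this real appears in some intermediate model $V[G_\gamma]$ with $\gamma<\omega_2$. So $\mu\in\MeC^{V[G_\gamma]}$, and we split
\[
X=\{m_\alpha:\alpha<\gamma\}\cup\{m_\alpha:\gamma\leq\alpha<\omega_2\}.
\]

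\emph{Head.} The set $X_\gamma=\{m_\alpha:\alpha<\gamma\}$ has size at most $\aleph_1$ and lies in $V[G_\gamma]$. We argue by induction on $\gamma<\omega_2$ that $X_\gamma$ is universally null in $V[G_\gamma]$. Granting this, Corollary~\ref{cor:preservation}, applied to the tail iteration over $V[G_\gamma]$, shows that $X_\gamma$ remains universally null in the final model. In particular $\mu^*(X_\gamma)=0$.

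The induction runs as follows.
\begin{itemize}
\item \emph{Successor step.} If $X_\beta$ is universally null in $V[G_\beta]$, then it stays universally null in $V[G_{\beta+1}]$ by Corollary~\ref{cor:preservation} for the one-step iteration. Adding the single point $m_\beta$ preserves universal nullity, since all measures in $\MeC$ are atomless.
\item \emph{Limit step, countable cofinality.} If $\cf(\gamma)=\omega$, then $X_\gamma$ is a countable union of the sets $X_{\beta_n}$. Each $X_{\beta_n}$ is universally null in $V[G_{\beta_n}]$, hence in $V[G_\gamma]$ by preservation. Since $\UN$ is a $\sigma$-ideal, the union is universally null.
\item \emph{Limit step, cofinality $\omega_1$.} If $\cf(\gamma)=\omega_1$, then no new reals are added at stage $\gamma$, so any $\mu\in\MeC^{V[G_\gamma]}$ already lies in some $V[G_\beta]$ with $\beta<\gamma$. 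We then use the tail argument below, with $\beta$ in place of $\gamma$, to handle $\{m_\alpha:\beta\leq\alpha<\gamma\}$.
\end{itemize}
For this reason the induction hypothesis should be formulated as the full statement: for all $\beta<\gamma$, $X_\gamma$ is universally null in $V[G_\gamma]$.

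\emph{Tail.} This is the main step. Here $\mu\in V[G_\gamma]$, and each $m_\alpha$ with $\alpha\geq\gamma$ is a Miller real over $V[G_\alpha]\supseteq V[G_\gamma]$. We need a single Borel $\mu$-null set, coded in $V[G_\gamma]$, that contains all these reals. The natural candidate is
\[
N=\{x\in\omega^\omega : x \text{ is not in the closure of any } \mu\text{-positive\ldots}\},
\]
but a cleaner route is via a density argument on Miller trees. Work in $V[G_\gamma]$ and fix a $\mu$-null $G_\delta$ set $N$ such that every Miller tree $T$ has a subtree $T'\leq T$ with $[T']\subseteq N$. Then genericity of each $m_\alpha$ over $V[G_\alpha]$, which contains the code of $N$, forces $m_\alpha\in N$.

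To build $N$: for each finite sequence $s$, the measure $\mu$ restricted to $[s]$ spreads over infinitely many successors $s\append k$, with $\sum_k\mu([s\append k])=\mu([s])$. Choose $N$ to consist of the $x$ such that, for infinitely many $n$, the value $x(n)$ is large relative to a threshold $g(x\restrict n)$. The threshold $g$ is chosen so that $\mu\bigl(\bigcup_{k\geq g(s)}[s\append k]\bigr)\leq 2^{-|s|}\cdot 4^{-|s|}$, which with summation over nodes of each length needs care.

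This is the delicate part. Summing over all $s$ of a given length can diverge, so we instead pick $g$ with $\mu\bigl(\bigcup_{k\geq g(s)}[s\append k]\bigr)\leq\varepsilon_s$, where $\sum_s\varepsilon_s<\infty$. Borel–Cantelli then gives $\mu(N)=0$. Since every splitting node of a Miller tree has infinitely many successors, we can prune each splitting node to successors whose value exceeds $g$. This yields $T'\leq T$ with all branches in $N$. The main obstacle is exactly making the density argument compatible with the iteration, meaning that $m_\alpha$ is generic for $\Miller$ over a model containing $g$. That holds because $g\in V[G_\gamma]\subseteq V[G_\alpha]$.

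Putting the pieces together, $\mu^*(X)\leq\mu^*(X_\gamma)+\mu(N)=0$.
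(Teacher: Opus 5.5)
Your proposal is correct and is essentially the paper's proof. You use the same induction, with the successor and countable-cofinality steps handled by Corollary~\ref{cor:preservation}, and the uncountable-cofinality case (including the final stage $\omega_2$) handled by locating $\mu$ in an earlier $V[G_\beta]$, covering the head by the induction hypothesis and covering the later Miller reals by a $\mu$-null set coded in $V[G_\beta]$; the only difference is that the paper uses a single $g\in\omega^\omega$ with $\mu(\{x:x(n)>g(n)\})<2^{-n-1}$ and the unboundedness of Miller reals over $H_g=\{x:x\le^*g\}$, whereas you use a node-dependent threshold $g(s)$ with summable tails, justified by the same Borel--Cantelli and pruning argument.
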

    \begin{proof}
        Let $G$ be $\Miller_{\omega_2}$-generic over
        $V\models\CH$, and put $G_\alpha=G\cap\Miller_\alpha$.
        If $m_\alpha$ is the Miller real added at stage $\alpha$, set
        \[
            X_\alpha=\{m_\xi:\xi<\alpha\}
            \qquad(\alpha\leq\omega_2).
        \]
        We prove by induction on $\alpha$ that $X_\alpha$ is
        universally null in $V[G_\alpha]$.  The case $\alpha=0$ is
        immediate.

        At a successor stage, apply Corollary~\ref{cor:preservation}
        and use $X_{\alpha+1}=X_\alpha\cup\{m_\alpha\}$.

        If $\cf(\alpha)=\omega$, choose a cofinal sequence
        $\seq{\alpha_n:n\in\omega}$ in $\alpha$.  By induction and
        Corollary~\ref{cor:preservation}, each $X_{\alpha_n}$ remains
        universally null in $V[G_\alpha]$, and so does their union
        $X_\alpha$.

        Suppose $\cf(\alpha)>\omega$.  It suffices to consider an
        atomless Borel probability measure $\mu$ on $\omega^\omega$
        in $V[G_\alpha]$.  Countable support iterations of proper
        forcings add no reals at limits of uncountable cofinality.
        Thus the code of $\mu$ belongs to some $V[G_\beta]$ with
        $\beta<\alpha$.  By induction, that model contains a Borel
        set $B\supseteq X_\beta$ with $\mu(B)=0$.

        In $V[G_\beta]$, choose $g\in\omega^\omega$ such that
        \[
            \mu(\{x\in\omega^\omega:x(n)>g(n)\})<2^{-n-1}
            \qquad(n\in\omega).
        \]
        By the Borel--Cantelli lemma, the Borel set
        \[
            H_g=\{x\in\omega^\omega:x\leq^*g\}
            =\bigcup_{N\in\omega}\bigcap_{n\geq N}
                \{x:x(n)\leq g(n)\}
        \]
        has $\mu$-measure one.  The equalities $\mu(B)=0$ and
        $\mu(H_g)=1$ persist in $V[G_\alpha]$.
        For $\beta\leq\xi<\alpha$, Miller genericity over
        $V[G_\xi]$ gives $m_\xi\not\leq^*g$.  Indeed, arbitrarily
        far along a Miller tree, a splitting node has a successor
        whose value exceeds $g$ at that coordinate.  Therefore
        \[
            X_\alpha\subseteq B\cup(\omega^\omega\setminus H_g),
            \qquad
            \mu\bigl(B\cup(\omega^\omega\setminus H_g)\bigr)=0.
        \]
        This completes the induction.
    \end{proof}

    \begin{cor}\label{cor:brendle_larson}
        The Miller model contains a nonmeager universally null set of
        cardinality $\aleph_2=\frakc$.
    \end{cor}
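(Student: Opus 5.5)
The plan is to take the union of a ground-model nonmeager universally null set with the set of Miller reals from Theorem~\ref{thm:miller_reals_universally_null}.

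\emph{Step 1: the ground-model set.} Work in $V\models\CH$. Here $\min\{\frakd,\non(\nul)\}=\aleph_1=\cof(\meager)$, so the Brendle--Larson theorem \cite{BrendleLarson2017Nonmeager} gives a nonmeager universally null set $Y\subseteq\omega^\omega$ in $V$. Alternatively, a Luzin set that is universally null can be built directly under $\CH$ by a Grzegorek-type construction, since $\non(\nul)=\aleph_1$. I would transfer $Y$ to $\omega^\omega$ via a Borel isomorphism fixed in $V$, as in the remark after Corollary~\ref{cor:preservation}. Meagerness is not Borel-isomorphism invariant, so it is cleaner to take $Y$ nonmeager in $\omega^\omega$ from the start; Brendle--Larson works in any perfect Polish space.

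\emph{Step 2: the union.} By Corollary~\ref{cor:preservation} with $\delta=\omega_2$, $Y$ remains universally null in $V[G]$. By Theorem~\ref{thm:miller_reals_universally_null}, $X=\{m_\alpha:\alpha<\omega_2\}$ is universally null in $V[G]$. Since $\UN$ is a $\sigma$-ideal, $Z=X\cup Y$ is universally null in $V[G]$.

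\emph{Step 3: cardinality and nonmeagerness.} The Miller reals are pairwise distinct, because $m_\alpha\notin V[G_\alpha]$ while $m_\xi\in V[G_\alpha]$ for $\xi<\alpha$. Hence $|X|=\aleph_2$. Together with $\frakc=\aleph_2$ in the Miller model, this gives $|Z|=\aleph_2=\frakc$.

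The main obstacle is nonmeagerness: we need $Y$ to stay nonmeager in $V[G]$. This follows from the fact already cited in the proof of Lemma~\ref{lem:singularity_implies_nullity_of_ground_model}, that countable support Miller iterations preserve nonmeager sets \cite[Theorem 10(3)]{Bartoszynski2002PerfectlyMeager}. Any superset of a nonmeager set is nonmeager, so $Z\supseteq Y$ is nonmeager, which completes the proof.
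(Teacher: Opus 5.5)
Your proposal is correct and follows essentially the same approach as the paper. The paper also unites a ground-model nonmeager universally null $Y\subseteq\omega^\omega$ (from Brendle--Larson under $\CH$), preserved as universally null by Corollary~\ref{cor:preservation} and as nonmeager by \cite[Theorem 10(3)]{Bartoszynski2002PerfectlyMeager}, with the universally null set $X$ of Miller reals from Theorem~\ref{thm:miller_reals_universally_null}. Your additional checks, namely that the $m_\alpha$ are pairwise distinct and that $Y$ should be taken nonmeager in $\omega^\omega$ itself, are details the paper leaves implicit.
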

    \begin{proof}
        By $\CH$ and \cite[Theorem 0.1]{BrendleLarson2017Nonmeager},
        there is a nonmeager universally null set
        $Y\subseteq\omega^\omega$ in the ground model.
        It remains universally null by Corollary~\ref{cor:preservation}
        and nonmeager by
        \cite[Theorem 10(3)]{Bartoszynski2002PerfectlyMeager}.
        By Theorem~\ref{thm:miller_reals_universally_null},
        $X=\{m_\alpha:\alpha<\omega_2\}$ is universally null and has
        cardinality $\aleph_2$.  Thus $X\cup Y$ is a nonmeager
        universally null set of cardinality $\aleph_2=\frakc$.
    \end{proof}

	\section{Universally null sets in the \texorpdfstring{$\PTfg$}{PTfg} model}
	
	We consider the countable support iteration of $\PTfg$ of length
	$\omega_2$ over a model of $\CH$. Here $f,g$ are admissible
	parameters in
	\cite{BartoszynskiJudahShelah1993Cichon}.
	That paper denotes this forcing by $Q_{f,g}$.
	We use the convention that
	the possible values at coordinate $n$ form the set $f(n)$.
	In particular, the parameters satisfy
	\begin{equation}\label{eq:ptfg_growth}
		f(n)>\prod_{j<n}f(j),
		\qquad
		g(n,k+1)>f(n)^2g(n,k)
		\quad(n,k\in\omega).
	\end{equation}
	We prove that every universally null set in the final extension
	is contained in the reals of some intermediate model, and
	consequently has cardinality at most $\aleph_1$.
	
	We use the terminology of
	\cite[Definition~3.1]{LarsonZapletal2023Polar}.
	An ideal $I$ on a Polish space $Z$, generated by analytic sets,
	is $\boldsig^1_1$-polar if there is an analytic family
	$\mathcal M$ of Borel probability measures on $Z$ such that,
	for every analytic $B\subseteq Z$,
	\[
	B\notin I
	\quad\Longleftrightarrow\quad
	\exists\mu\in\mathcal M\ \mu(B)>0
	\quad\Longleftrightarrow\quad
	\exists\mu\in\mathcal M\ \mu(B)=1.
	\]
	It is iterable if the quotient forcing $P_I$ is proper in
	every forcing extension.
	
	\begin{lem}\label{lem:ptfg_polar}
		The forcing $\PTfg$ has an iterable
		$\boldsig^1_1$-polar ideal presentation.
	\end{lem}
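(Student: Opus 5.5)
The plan is to present $\PTfg$ as the idealized forcing $P_I$ for a natural $\sigma$-ideal $I$ on the compact space $Z=\prod_n f(n)$, and then to check the two requirements separately: $\boldsig^1_1$-polarity via an analytic (indeed closed) family of measures, and iterability via properness of $P_I$ in every extension.

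For the presentation, every condition of $\PTfg$ is a perfect tree $T\subseteq\prod_{n}f(n)^{<\omega}$ with a norm condition at each level, defined from $g$. Its set of branches $[T]$ is a closed subset of $Z$. Let $I$ be the $\sigma$-ideal generated by the closed sets $C\subseteq Z$ containing no $[T]$ with $T\in\PTfg$. I will first show that every analytic $I$-positive set contains some $[T]$. This is a perfect-set-type dichotomy, proved by the usual Cantor--Bendixson style derivative on norms or by the determinacy of a fusion game, and it is essentially the content of the properness proof in \cite{BartoszynskiJudahShelah1993Cichon}. Granting it, $T\mapsto[T]$ gives a dense embedding of $\PTfg$ into $P_I$.

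For polarity, to each tree $T\in\PTfg$ I attach a probability measure $\mu_T$ on $[T]$ built level by level. At a node $s$ of height $n$, it distributes the mass of $s$ uniformly over suitable subsets of the successor set $\mathrm{succ}_T(s)\subseteq f(n)$, chosen according to the norm structure given by $g(n,\cdot)$. Let $\mathcal M=\{\mu_T:T\in\PTfg\}$. This is an analytic (even Borel) family, as the continuous image of the Polish space of such trees. The nontrivial direction is the following: if $B$ is analytic and $\mu_T(B)>0$ for some $T$, then $B\notin I$, and moreover some $\mu_{T'}$ gives $B$ full measure.

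For this I would show that each closed $I$-small set is $\mu_T$-null for every $T$. The growth condition \eqref{eq:ptfg_growth} ensures that a subset of successors of small norm receives small $\mu_T$-mass, with errors summable over levels because $g(n,k+1)>f(n)^2g(n,k)$. Conversely, if $\mu_T(B)>0$, a Lebesgue density argument along the tree yields a subtree $T'$ whose nodes each have density close to $1$ in $B$. Such a $T'$ still has large norms, so it lies in $\PTfg$, and then $\mu_{T'}(B)=1$ after a further thinning with summable losses. This density-to-norm step is where I expect the main difficulty: the norms must be defined so that large measure forces large norm and vice versa. I would first try to take the norm at level $n$ to be essentially $\log_{f(n)}$ of the $g$-index of the successor set, and verify both directions using \eqref{eq:ptfg_growth}.

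Iterability should then be straightforward. The definitions of $\PTfg$, $I$ and $\mathcal M$ are absolute, so the dense-embedding argument reinterprets in any forcing extension. Properness of $\PTfg$ there, in fact Axiom A via the fusion orderings $\leq_n$, is proved in ZFC in \cite{BartoszynskiJudahShelah1993Cichon}, so $P_I$ is proper in every extension.
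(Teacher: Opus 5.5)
Your outline has the same shape as the paper's proof: realize $\PTfg$ as $P_I$, attach a measure to each condition, use \eqref{eq:ptfg_growth} for vanishing, and get iterability from $\ZFC$ proofs. The genuine gap is that the ideal you chose is wrong, and the dichotomy you assert for it is false.

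\emph{Your $I$ consists of meager sets.} A closed set with nonempty interior contains a basic set $[s]$. That set equals $[T]$ for the full tree through $s$, which is a condition. So every closed set containing no $[T]$ is nowhere dense, and your $I$ is contained in the meager ideal.

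\emph{A comeager set containing no $[T]$.} Fix $y\in Z$ and let $G_y=\{x\in Z:\exists^\infty n\ x(n)=y(n)\}$, a comeager $G_\delta$ set. No condition $T$ satisfies $[T]\subseteq G_y$: go to a node of $T$ above which every node has at least two immediate successors, and from then on always avoid the successor $y(|s|)$. This is the eventually different real by which $\PTfg$ makes the ground-model reals meager.

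So $G_y$ is a Borel $I$-positive set containing no $[T]$. Your dichotomy fails, and $T\mapsto[T]$ is not dense in $P_I$. Polarity fails as well. By your own vanishing claim, every compact subset of $G_y$ is $\mu_T$-null, so $\mu_T(G_y)=0$ for every $T$, even though $G_y\notin I$.

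The ideal attached to $\PTfg$ must contain $G_\delta$ sets of this kind, so it is not $\sigma$-generated by closed sets. This is why the paper uses the ideal $I_{\sigma\psi}$ of \cite[Section~4.4.3]{Zapletal2008ForcingIdealized}. For that ideal, Proposition~4.4.14 there gives exactly the dichotomy $B\notin I\iff\exists S\in\PTfg\ ([S]\subseteq B)$ for analytic $B$. That is a separate theorem about this ideal, not a byproduct of the fusion argument for properness in \cite{BartoszynskiJudahShelah1993Cichon}.

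With the correct ideal, the rest is simpler than you expect. The norms are fixed by the definition of $\PTfg$, so there is nothing to design. The measure needs no tailoring either: the paper lets $\mu_S$ split mass equally among the immediate successors of each node of $S$. The growth condition gives the summability hypothesis of \cite[Proposition~4.4.16]{Zapletal2008ForcingIdealized}, whose proof shows that $\mu_S$ vanishes on $I$.

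Polarity is then a cycle of implications. If $B\notin I$, then $[S]\subseteq B$ for some $S$, so $\mu_S(B)=1$, so $\mu_S(B)>0$, and vanishing gives $B\notin I$. No density-to-norm thinning is needed.

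The family $\{\mu_S:S\in\PTfg\}$ is analytic because $S\mapsto\mu_S$ is Borel on the Borel set $\PTfg$; you do not need $\PTfg$ to be Polish. Iterability follows as you say. The dichotomy and properness (the paper cites \cite[Theorem~4.4.2]{Zapletal2008ForcingIdealized}) are $\ZFC$ theorems, so they hold in every extension.
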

	\begin{proof}
		Put $Z=\prod_n f(n)$, and let $I=I_{\sigma\psi}$ be the
		ideal associated with $\PTfg$ in
		\cite[Section~4.4.3]{Zapletal2008ForcingIdealized}.
		By Proposition~4.4.14 of that reference, for every analytic
		$B\subseteq Z$,
		\begin{equation}\label{eq:ptfg_positive}
			B\notin I
			\quad\Longleftrightarrow\quad
			\exists S\in\PTfg\ ([S]\subseteq B).
		\end{equation}
		In particular, $S\mapsto[S]$ gives a dense embedding into $P_I$.
		
		The summability hypothesis of
		\cite[Proposition~4.4.16]{Zapletal2008ForcingIdealized}
		follows from \eqref{eq:ptfg_growth}: for every $k\in\omega$,
		\[
		\sum_n
		\left(\prod_{j<n}f(j)\right)
		\frac{g(n,k)}{g(n,k+1)}
		\leq
		\sum_n\frac{\prod_{j<n}f(j)}{f(n)^2}
		\leq
		\sum_n\frac1{f(n)}
		<\infty.
		\]
		The last inequality holds since $f(n)>2^n$.
		Here the arguments of $g$ are ordered as in the proof
		on p.~192 of that reference.
		
		For $S\in\PTfg$, let $\mu_S$ be the probability measure
		concentrated on $[S]$ which distributes mass equally among
		the immediate successors of each node of $S$.
		The proof of Proposition~4.4.16 shows that $\mu_S$ vanishes
		on every member of $I$.
		Together with \eqref{eq:ptfg_positive}, this gives, for
		analytic $B\subseteq Z$,
		\[
		B\notin I
		\quad\Longleftrightarrow\quad
		\exists S\in\PTfg\ \mu_S(B)>0
		\quad\Longleftrightarrow\quad
		\exists S\in\PTfg\ \mu_S(B)=1.
		\]
		
		The set of conditions $\PTfg$ is Borel in the space of
		subtrees of $\bigcup_n\prod_{j<n}f(j)$, and the map
		$S\mapsto\mu_S$ is Borel: its values on basic cylinders
		are determined by finitely many successor sets.
		Thus $\{\mu_S:S\in\PTfg\}$ is analytic, so $I$ is
		$\boldsig^1_1$-polar.
		
		These descriptions remain valid in every forcing extension.
		Moreover, properness of $P_I$ follows in every such extension
		from \cite[Theorem~4.4.2]{Zapletal2008ForcingIdealized}.
		Hence $I$ is iterable.
	\end{proof}
	
	We use the following consequence of the results on measured
	extensions in \cite{LarsonZapletal2023Polar}.
	
	\begin{fact}\label{fact:polar_intersection}
		Let $W$ be an extension of $U$ by a countable support
		iteration of an iterable $\boldsig^1_1$-polar forcing.
		Suppose that
		\[
		\seq{B_\mu:\mu\in\PrC^U}\in U
		\]
		is a family of Borel sets such that $\mu(B_\mu)<1$
		for every $\mu\in\PrC^U$.
		Then, interpreting the Borel codes in $W$,
		\[
		\bigcap_{\mu\in\PrC^U}B_\mu^W=\emptyset.
		\]
	\end{fact}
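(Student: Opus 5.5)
The plan is to derive the Fact from the main theorem of \cite{LarsonZapletal2023Polar}: a countable support iteration of iterable $\boldsig^1_1$-polar forcings yields a \emph{measured} extension. The argument is then a pointwise diagonalization. Fix $x\in(2^\omega)^W$; we must find some $\mu\in\PrC^U$ with $x\notin B_\mu^W$. The whole family $\seq{B_\mu:\mu\in\PrC^U}$ is only ever used one coordinate at a time. For each $\mu$ we interpret the single code of $B_\mu$ in $W$, so it does not matter that the family as a set of codes may look different from $W$'s point of view.

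First I will state precisely the form of measuredness I intend to use. For every $x\in(2^\omega)^W$ there should be $\mu_x\in\PrC^U$ such that every Borel set $C$ coded in $U$ with $x\in C^W$ satisfies $\mu_x(C)=1$ in $U$. In other words, the $U$-Borel filter of $x$ is contained in the measure-one filter of $\mu_x$. Granting this, the Fact is immediate. Take $C=B_{\mu_x}$. If $x\in B_{\mu_x}^W$, then $\mu_x(B_{\mu_x})=1$, contradicting the hypothesis $\mu_x(B_{\mu_x})<1$. Hence $x\notin B_{\mu_x}^W$, and so $x\notin\bigcap_{\mu\in\PrC^U}B_\mu^W$. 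Since $x$ was arbitrary, the intersection is empty.

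As a sanity check, for $x\in U$ the measure $\mu_x=\delta_x$ works trivially: $x\in C$ iff $\delta_x(C)=1$. So the content of the theorem lies entirely with the new reals. For those, I will trace through the Larson--Zapletal argument. At a single step, the $P_I$-generic point $x_{\mathrm{gen}}$ lies in a condition $[S]$. The witnessing measure is then the $\mu\in\mathcal M$ with $\mu([S])=1$, and more precisely a measure concentrated on the relevant condition. The analytic family $\mathcal M$ certifies that any Borel set containing the generic is $\mu$-large for a suitable $\mu$. The iteration theorem propagates this through countable support limits. For us it is the case $\mathcal M=\{\mu_S:S\in\PTfg\}$ supplied by Lemma~\ref{lem:ptfg_polar}, and iterability ensures the presentation remains valid at each stage.

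The main obstacle is purely one of matching definitions. The measured-extension notion in \cite{LarsonZapletal2023Polar} might be phrased in a weaker form. One possibility is ``$x$ avoids every $\mu$-null ground-model Borel set,'' which only gives $\mu(C)>0$ for $C\ni x$ and would not suffice directly. Another is a formulation with measures on a different Polish space, or for points of arbitrary Polish spaces. If only the weaker form is available, I would first try to upgrade it. Choose the measure by a maximality/condensation argument: among the ground-model Borel sets containing $x$, restrict $\mu$ to them and renormalize, using that the filter is countably complete over $U$ and that the Borel-set-to-measure-value map lives in $U$. Alternatively, I would extract the stronger ``measure one'' form directly from their proof, where the witnessing measure comes from the polar presentation with the $\mu(B)>0\Leftrightarrow\mu(B)=1$ clause. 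That clause is exactly what turns positivity into full measure. Once the measure-one form is secured, the diagonalization above finishes the proof.
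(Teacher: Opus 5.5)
The pointwise property you rely on is false for every real outside $U$, so your diagonalization only covers the trivial case. Suppose $\mu_x\in\PrC^U$ satisfies $\mu_x(C)=1$ for every $U$-coded Borel $C$ with $x\in C^W$. Then $\mu_x([x\restrict n])=1$ for every $n$. In $U$, the unique $s_n\in 2^n$ with $\mu_x([s_n])=1$ is computed from $\mu_x$, so $x=\bigcup_n s_n\in U$.

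So the obstacle is not one of matching definitions. No result of Larson and Zapletal can give this property for new reals, and neither your proposed upgrade nor an extraction from their proof can succeed. The clause $\mu(B)>0\Leftrightarrow\mu(B)=1$ only says that each single positive set is full for \emph{some} $\mu\in\mathcal M$, which depends on that set. It does not give one measure that is full on all sets containing $x$.

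Your single-step picture already shows the problem. The measure concentrated on one condition controls only the sets that this condition forces to contain $x_{\mathrm{gen}}$. Deeper conditions in the generic filter also contain $x_{\mathrm{gen}}$ but need not be full for that measure.

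Even your weaker pointwise form, randomness over $U$ for a single $\mu$, fails in general. Take a Sacks real $s$; the ideal of countable sets is $\boldsig^1_1$-polar via the atomless measures. For any ground-model $\mu$, every perfect tree has a perfect subtree $T'$ with $\mu([T'])=0$. By genericity, $s$ lies in a ground-model-coded $\mu$-null set.

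What is missing is that the measure witnessing $x\notin B_\mu$ must depend on the family $\seq{B_\mu:\mu\in\PrC^U}$ and on a condition in the generic filter. It has to be found by a density argument. For a single polar step this goes as follows:
\begin{enumerate}
\item By properness of $P_I$, densely many conditions $C\notin I$ force $\dot x=f(\dot x_{\mathrm{gen}})$ for some Borel $f$ coded in $U$.
\item Polarity gives $\mu\in\mathcal M$ with $\mu(C)=1$.
\item For $\nu=f_*\mu\in\PrC^U$ we get $\mu(C\cap f^{-1}[B_\nu])\le\nu(B_\nu)<1$.
\item So $C\setminus f^{-1}[B_\nu]$ is $\mu$-positive, hence $I$-positive. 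It is a stronger condition forcing $\dot x\notin B_\nu$.
\end{enumerate}
This is the real role of the $\mu(B)=1$ clause: it is applied condition by condition, not point by point.

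The substantive step is that a property of this kind survives countable support iterations. The paper takes it from the literature: by \cite[Theorem~5.12]{LarsonZapletal2023Polar} $W$ is a measured extension of $U$, and \cite[Proposition~5.2]{LarsonZapletal2023Polar} then yields the empty intersection. Your claim that the iteration theorem propagates your pointwise property rests on a false statement. The proposal therefore has no valid substitute for these two steps.
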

	\begin{proof}
		By \cite[Theorem~5.12]{LarsonZapletal2023Polar},
		$W$ is a measured extension of $U$.
		Apply \cite[Proposition~5.2]{LarsonZapletal2023Polar}.
	\end{proof}
	
	\begin{thm}\label{thm:ptfg_no_large_un}
		Assume $V\models\CH$, and let
		$\seq{\P_\alpha,\dot{\Q}_\alpha:\alpha<\omega_2}$
		be the countable support iteration of $\PTfg$.
		Let $G$ be $\P_{\omega_2}$-generic over $V$, and put
		$V_\alpha=V[G\restrict\alpha]$.
		If $A\subseteq2^\omega$ is universally null in $V_{\omega_2}$,
		then
		\[
		A\subseteq(2^\omega)^{V_\beta}
		\]
		for some $\beta<\omega_2$.
		Consequently, $|A|\leq\aleph_1$, and the $\PTfg$ model
		contains no universally null set of size
		$\frakc=\aleph_2$.
	\end{thm}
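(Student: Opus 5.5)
The plan is to reduce the statement to Fact~\ref{fact:polar_intersection}, applied over a suitable intermediate model $U=V_\beta$. Over $V_\beta$, the tail of the iteration is again a countable support iteration of $\PTfg$, and it is iterable $\boldsig^1_1$-polar by Lemma~\ref{lem:ptfg_polar}. Suppose, toward the inclusion, that we have a family $\seq{C_\mu:\mu\in\PrC^{V_\beta}}\in V_\beta$ of Borel codes with the following property: whenever $\mu$ has a nonzero atomless part $\mu^{\mathrm c}$, we have $A\subseteq C_\mu^{V_{\omega_2}}$ and $\mu^{\mathrm c}(C_\mu)=0$. For $\mu\in\PrC^{V_\beta}$, let $S_\mu\in V_\beta$ be the countable set of atoms of $\mu$. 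Put $B_\mu=C_\mu\setminus S_\mu$ if $\mu^{\mathrm c}\neq0$, and $B_\mu=2^\omega\setminus S_\mu$ if $\mu$ is purely atomic. In either case $\mu(B_\mu)=0<1$, and the family $\seq{B_\mu}$ lies in $V_\beta$. Now take any $a\in A\setminus V_\beta$. Then $a\notin S_\mu$ for every $\mu$, so $a\in\bigcap_\mu B_\mu^{V_{\omega_2}}$. This contradicts Fact~\ref{fact:polar_intersection}, so $A\subseteq(2^\omega)^{V_\beta}$.

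The remaining task, which I expect to be the main obstacle, is to produce $\beta$ together with such a family lying in $V_\beta$. I would first fix in $V$ a $\P_{\omega_2}$-name $\dot F$ for a function that assigns to each atomless probability measure $\mu$ (given by its code) a Borel code $\dot F(\mu)$. It should be forced that $\dot A\subseteq\dot F(\mu)$ and $\mu(\dot F(\mu))=0$; such a name exists because $A$ is universally null in $V_{\omega_2}$. Next I would use the standard facts about this model:
\begin{itemize}
\item $\P_{\omega_2}$ has the $\aleph_2$-c.c., since it is proper of size $\aleph_1$ at each stage under $\CH$.
\item Every real in $V_{\omega_2}$ appears in some $V_\gamma$ with $\gamma<\omega_2$.
\item $V_\gamma\models\CH$ for every $\gamma<\omega_2$.
\item No new reals are added at stages of cofinality $\omega_1$.
\end{itemize}
Fix $\gamma<\omega_2$. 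The reals of $V_\gamma$ have nice $\P_\gamma$-names, at most $\aleph_1$ of them. For each such name, the value of $\dot F$ at it is decided by an antichain of size $\le\aleph_1$, hence by $\P_{\delta}$ for some $\delta<\omega_2$. A closing-off argument then shows that the set of $\beta<\omega_2$ with the following property is club: for every $\P_\beta$-name $\dot\mu$ of a measure, $\dot F(\dot\mu)$ is equivalent to a $\P_\beta$-name.

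Choose such a $\beta$ of cofinality $\omega_1$. Then $\dot F$ restricted to $\P_\beta$-names of measures is itself a $\P_\beta$-name. Hence the function $C=\dot F^G\restriction\PrC^{V_\beta}$ is evaluable in $V_\beta$, and $C\in V_\beta$. Here $\PrC^{V_\beta}$ is exactly the set of measures coded by $\P_\beta$-names, since no reals are added at stage $\beta$. The cover properties $A\subseteq C_\mu$ and $\mu(C_\mu)=0$ still hold in $V_{\omega_2}$ because they were forced of $\dot F$. Applying the first paragraph then gives $A\subseteq(2^\omega)^{V_\beta}$.

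Finally, $V_\beta\models\CH$ gives $|A|\le|(2^\omega)^{V_\beta}|=\aleph_1$. Since $\frakc=\aleph_2$ in $V_{\omega_2}$, the $\PTfg$ model contains no universally null set of size $\frakc$. In writing this out, the point to check most carefully is the reflection step: that the restricted name really lives in $V_\beta$. I would handle it by working with nice names and the $\aleph_2$-c.c. as above, rather than arguing only about the values of $C$.
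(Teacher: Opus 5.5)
Your proposal is correct and follows essentially the paper's route: null hulls of $A$ for every measure, reflection to some $V_\beta$ with $\cf(\beta)=\omega_1$, removal of atoms (which all lie in $V_\beta$), and a contradiction with Fact~\ref{fact:polar_intersection} for any $a\in A\setminus V_\beta$. The only cosmetic difference is that you strip atoms after reflecting, whereas the paper builds $B_\mu=H_\mu\setminus D_\mu$ for all $\mu\in\PrC$ before reflecting. Your sketch of the reflection step, which the paper leaves as standard, needs two small corrections: for each $\gamma$, use $\aleph_1$ names enumerating the reals of $V_\gamma$ (there may be more than $\aleph_1$ inequivalent nice names), and take closure points of cofinality $\omega_1$ (the set you describe need not be closed at countable cofinalities).
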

	\begin{proof}
		We use the standard properties of this iteration:
		$\P_{\omega_2}$ is $\aleph_2$-cc, every $V_\alpha$ with
		$\alpha<\omega_2$ satisfies $\CH$, and no new real first
		appears at a limit stage of uncountable cofinality.
		
		We use the following standard reflection fact.
		
		\begin{claim}\label{claim:ptfg_reflect_function}
			For every function
			$F:\PrC^{V_{\omega_2}}\to(\omega^\omega)^{V_{\omega_2}}$
			in $V_{\omega_2}$, there is $\beta<\omega_2$ with
			$\cf(\beta)=\omega_1$ such that
			\[
			F\restrict\PrC^{V_\beta}\in V_\beta.
			\]
		\end{claim}
		\begin{proof}
			This follows by a standard reflection argument.
		\end{proof}
		
		Work in $V_{\omega_2}$.
		For each $\mu\in\PrC$, let
		\[
		D_\mu=\{x\in2^\omega:\mu(\{x\})>0\}
		\]
		be its countable set of atoms.
		Choose a $G_\delta$ set $B_\mu$ such that
		\begin{equation}\label{eq:ptfg_null_hulls}
			A\setminus D_\mu\subseteq B_\mu,
			\qquad \mu(B_\mu)=0.
		\end{equation}
		To see that this is possible, let
		$\mu_{\mathrm c}(E)=\mu(E\setminus D_\mu)$ be the
		atomless part of $\mu$.
		Universal nullity gives a $G_\delta$ set $H_\mu\supseteq A$
		with $\mu_{\mathrm c}(H_\mu)=0$.
		Then $B_\mu=H_\mu\setminus D_\mu$ works.
		If $\mu_{\mathrm c}=0$, take $H_\mu=2^\omega$.
		
		Let $F(\mu)$ be a real coding $B_\mu$.
		By Claim~\ref{claim:ptfg_reflect_function}, choose
		$\beta<\omega_2$ such that
		\[
		F\restrict\PrC^{V_\beta}\in V_\beta.
		\]
		In particular, $V_\beta$ contains the entire family of
		codes for $B_\mu$, $\mu\in\PrC^{V_\beta}$.
		By absoluteness of the measure of a coded Borel set,
		$V_\beta$ also satisfies $\mu(B_\mu)=0$ for every
		such $\mu$.
		
		The tail extension $V_{\omega_2}$ of $V_\beta$ is again
		obtained by a countable support iteration of $\PTfg$.
		Lemma~\ref{lem:ptfg_polar} and
		Fact~\ref{fact:polar_intersection} therefore give
		\begin{equation}\label{eq:ptfg_empty_intersection}
			\bigcap_{\mu\in\PrC^{V_\beta}}
			B_\mu^{V_{\omega_2}}=\emptyset.
		\end{equation}
		
		Suppose that $x\in A\setminus V_\beta$.
		If $\mu\in\PrC^{V_\beta}$, all atoms of its
		reinterpretation belong to $V_\beta$: the countable set
		of atoms has an enumeration in $V_\beta$, which remains
		exhaustive in the extension.
		Thus $x\notin D_\mu$, and
		\eqref{eq:ptfg_null_hulls} implies
		$x\in B_\mu^{V_{\omega_2}}$.
		This holds for every $\mu\in\PrC^{V_\beta}$, contradicting
		\eqref{eq:ptfg_empty_intersection}.
		
		Hence $A\subseteq(2^\omega)^{V_\beta}$.
		Since $V_\beta\models\CH$ and the tail forcing preserves
		$\omega_1$, we obtain $|A|\leq\aleph_1$.
		Finally, the full iteration has continuum $\aleph_2$.
	\end{proof}
	
	\section{Forcing destructible universally null sets}\label{sec:destructible}
	
	In his lecture slides \emph{Universally Measurable Sets II},
	dated February~3, 2026, Larson
	\cite[p.~24]{Larson2026UniversallyMeasurableII} leaves open
	whether a universally null set which is not universally meager
	can be made non-universally-null by forcing, asking both whether
	this can happen at all and whether it can be done in general.
	The following theorem answers the existence part affirmatively,
	even with an $\omega_1$-preserving forcing.  The ground model
	set in our example is a Luzin set, hence nonmeager and therefore
	not universally meager.
	We derive the result from the construction of Veli\v{c}kovi\'{c} and
	Woodin in \cite[Section~3]{VelickovicWoodin1998Complexity}.
	The relevant feature of their construction is that the new perfect
	set consists entirely of members of an old family of Cohen reals.
	
	\begin{thm}\label{thm:destructible}
		It is consistent with $\ZFC$ that there are a universally null
		set $A\subseteq2^\omega$ and an $\omega_1$-preserving forcing
		notion $\Q$ such that
		\[
		\Q\forces\text{``$\check A$ contains a nonempty perfect set.''}
		\]
		In particular, $\Q\forces\check A\notin\UN$.
		Moreover, $A$ can be chosen to be a Luzin set in the ground model.
	\end{thm}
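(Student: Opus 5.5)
We plan to start from a ground model obtained by adding $\aleph_1$ Cohen reals over a model of $\CH$ (or simply by assuming $\CH$ after adding Cohen reals). Following the construction in \cite[Section~3]{VelickovicWoodin1998Complexity}, we work with a family $C=\{c_\xi:\xi<\omega_1\}$ of mutually generic Cohen reals over an inner model $V_0$. The Veli\v{c}kovi\'{c}--Woodin construction provides an $\omega_1$-preserving forcing $\Q$ over $V_0[C]$ that adds a nonempty perfect set $P\subseteq2^\omega$ with $P\subseteq C$. Our plan is to take $A=C$, or a suitable Luzin subset of it containing the reals that end up in $P$. We then verify two things: that $A$ is universally null in $V_0[C]$, and that $A$ is Luzin there. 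Once $\Q$ adds a perfect subset of $A$, we get $\Q\forces\check A\notin\UN$ immediately, because a nonempty perfect set carries an atomless Borel probability measure, namely the pushforward of the Haar measure under a homeomorphism from $2^\omega$, and under this measure $A$ has full outer measure.

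The Luzin property is standard. Every meager set in $V_0[C]$ is coded in $V_0[\seq{c_\xi:\xi<\alpha}]$ for some countable $\alpha$, by ccc. The reals $c_\xi$ with $\xi\geq\alpha$ are Cohen over that model, so they avoid the meager set. Hence every meager set meets $A$ in a countable set, and $A$ is uncountable and nonmeager.

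For universal nullity, we will prove that every set of size $\aleph_1$ that is Luzin in this way is universally null, using $\non(\nul)$-type reasoning. Fix an atomless Borel probability measure $\mu$ in $V_0[C]$. By ccc, $\mu$ is coded in an intermediate model $V_0[C\restrict\alpha]$ with $\alpha<\omega_1$. In that model, $\mu$ is atomless, so there is a dense $G_\delta$ set $D$ with $\mu(D)=0$: enumerate the basic clopen sets and choose open sets of $\mu$-measure below $2^{-n}$ that are dense, which is possible because $\mu$ is atomless. Each $c_\xi$ with $\xi\geq\alpha$ is Cohen over that model, so $c_\xi\in D$. The remaining countably many $c_\xi$ form a countable set, which is $\mu$-null because $\mu$ is atomless. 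Thus $\mu^*(A)=0$. This argument is routine.

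The main obstacle is confirming the exact form of the Veli\v{c}kovi\'{c}--Woodin result. We need three features: that their ground model is (or can be taken to be) a Cohen extension $V_0[C]$ with $C$ indexed by $\omega_1$ (or by some $\kappa$); that their forcing $\Q$ preserves $\omega_1$; and that the perfect set it adds is literally contained in the old family $C$ (the introduction states this is the relevant feature). If their family has size larger than $\aleph_1$, the same ccc argument still works verbatim, because any measure is coded using countably many coordinates and all other coordinates are Cohen over it. So we take $A=C$ in general. We will first check their Section~3 to extract the precise statement: a model in which, for a family of Cohen reals $C$ over $V_0$, some $\omega_1$-preserving $\Q$ forces a perfect $P\subseteq C$. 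Then we apply the two verifications above.
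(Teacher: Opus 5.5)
Your route is the paper's. You take $A$ to be the Veli\v{c}kovi\'{c}--Woodin family of Cohen reals and get the Luzin property from the ccc and Cohen genericity. You get universal nullity from a $\mu$-null dense $G_\delta$ set, and you destroy it with the pushforward of the product measure onto a perfect $P\subseteq A$. These verifications are correct, and you rightly isolate $P\subseteq C$ as the feature to extract; the paper also reads it off the proof, not the theorem statement. The gap is the step that turns the construction into the poset the statement asks for. You assume Veli\v{c}kovi\'{c}--Woodin hand you an $\omega_1$-preserving forcing $\Q$ over the ground model, but what the paper takes from them is a pair of models, both set-generic over $L$:
\begin{itemize}
\item $V=L[G^*]$ with $G^*=\seq{c_\xi:\xi<\kappa}$ being $\Add(\omega,\kappa)$-generic over $L$;
\item $W\supseteq V$ with $\omega_1^V=\omega_1^W$, containing a perfect set $P\subseteq\{c_\xi:\xi<\kappa\}$.
\end{itemize}
To get $\Q$ you must apply the intermediate model theorem, which gives $\Q_0\in V$ and a $V$-generic $H$ with $W=V[H]$. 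Then restrict $\Q_0$ below a condition in $H$ forcing both ``$\check A$ contains a nonempty perfect set'' and ``$\omega_1^V$ remains uncountable''. Only then does $\Q$ force the conclusion outright and preserve $\omega_1$; without this step, both $\Q\forces$ and ``$\omega_1$-preserving'' are unjustified.

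Also drop your opening setup of $\aleph_1$ Cohen reals over a model of $\CH$. The construction uses $\kappa=\omega_3^L$ Cohen reals over $L$ and collapses $\kappa$ to $\aleph_1$ in $W$. The paper remarks that this yields no example with a $\CH$ ground model, so nothing you cite supports that version. Your fallback is what is actually needed: take $A=C$ for an arbitrary index set, with a countable support $I\in L$ in place of an initial segment $\alpha<\omega_1$ in the ccc arguments.
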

	\begin{proof}
		Put $\kappa=\omega_3^L$.
		The construction in
		\cite[Section~3]{VelickovicWoodin1998Complexity}
		gives models $V\subseteq W$, both set-forcing extensions of $L$,
		with the following properties:
		\begin{enumerate}
			\item $V=L[G^*]$, where
			$G^*=\seq{c_\xi:\xi<\kappa}$ is an
			$\Add(\omega,\kappa)$-generic sequence of Cohen reals over $L$;
			\item $\omega_1^V=\omega_1^W$;
			\item in $W$, there is a nonempty perfect set $P$ such that
			\[
			P\subseteq\{c_\xi:\xi<\kappa\}.
			\]
		\end{enumerate}
		Here we use the construction in the proof, rather than only
		the assertion that $W$ contains a perfect set of $V$-reals.
		In that construction, all points of the perfect set in the
		final model are enumerated as $\seq{r_\alpha:\alpha<\omega_1}$
		and assigned to distinct coordinates of $G^*$:
		$c_{\gamma_\alpha}=r_\alpha$.
		This gives the inclusion in (3).
		
		Let $A=\{c_\xi:\xi<\kappa\}\in V$.
		We first show that $A$ is a Luzin set in $V$, that is,
		$A$ is uncountable and meets every meager set in a countable set.
		Work in $V$, and let $B\subseteq2^\omega$ be a meager Borel set.
		Since $\Add(\omega,\kappa)$ is ccc, there is a set
		$I\in L$, countable in $L$, such that a Borel code for $B$
		belongs to $L[G^*\restrict I]$.
		The meagerness of the coded Borel set is absolute between
		these models.
		For every $\xi\in\kappa\setminus I$, the real $c_\xi$ is
		Cohen generic over $L[G^*\restrict I]$, and hence $c_\xi\notin B$.
		Therefore
		\[
		A\cap B\subseteq\{c_\xi:\xi\in I\}
		\]
		is countable.
		Since every meager set is contained in a meager Borel set,
		$A$ is a Luzin set.
		
		For completeness, every Luzin set is universally null.
		Indeed, let $\nu$ be an atomless Borel probability measure
		on $2^\omega$ in $V$.
		A countable dense set is $\nu$-null, so it is contained
		in a $\nu$-null $G_\delta$ set $N$.
		Then $N$ is comeager, and thus $A\setminus N$ is countable.
		Consequently,
		\[
		\nu^*(A)\leq\nu(N)+\nu^*(A\setminus N)=0.
		\]
		Hence $V\models A\in\UN$.
		
		In $W$, however, $P\subseteq A$ is a nonempty perfect set.
		Choose a homeomorphism $h:2^\omega\to P$, and let
		$\mu=h_*\lambda$, where $\lambda$ is the standard product
		probability measure on $2^\omega$.
		Then $\mu$ is an atomless Borel probability measure and
		\[
		\mu(P)=1,\qquad\mu^*(A)=1.
		\]
		Thus $W\models A\notin\UN$.
		Notice that $A$ here is the original set in $V$.
		
		Finally, since $L\subseteq V\subseteq W$ and $W$ is a
		set-forcing extension of $L$, the intermediate model theorem
		gives a forcing notion $\Q_0\in V$ and a $V$-generic filter
		$H\subseteq\Q_0$ such that $W=V[H]$.
		By the forcing theorem, some condition in $H$ forces both
		that $\check A$ contains a nonempty perfect set and that
		$\omega_1^V$ remains uncountable.
		Restricting $\Q_0$ below this condition gives the required
		$\omega_1$-preserving forcing $\Q$.
	\end{proof}
	
	\begin{rmk}
		In this construction, $V\models\abs{A}=\frakc=\aleph_3$,
		whereas $W\models\abs{\kappa}=\aleph_1$.
		Thus the argument uses a collapse of cardinals above $\omega_1$;
		it does not give an example with a ground model satisfying
		$\CH$, or with a cardinal-preserving forcing.
	\end{rmk}
	
	\section{Cardinal invariants}\label{sec:cardinal_invariants}
	
		\begin{defi}
		Let $\scrI$ be a proper $\sigma$-ideal on the Cantor space containing all singletons.
		\begin{enumerate}
			\item Let $\add(\scrI)$ be the least cardinality of a subfamily $\mathcal{A} \subset \scrI$ such that $\bigcup \mathcal{A} \not \in \scrI$.
			\item Let $\cov(\scrI)$ be the least cardinality of a subfamily $\mathcal{A} \subset \scrI$ such that $\bigcup \mathcal{A} = 2^\omega$.
			\item Let $\non(\scrI)$ be the least cardinality of a subset $A \subset 2^\omega$ such that $A \not \in \scrI$.
			\item Let $\cof(\scrI)$ be the least cardinality of a subfamily $\mathcal{A} \subset \scrI$ which is cofinal: for every $X \in \scrI$, there is $Y \in \mathcal{A}$ such that $X \subset Y$.
			\item For ideals $\scrI \subset \scrJ$, let $\add(\scrI, \scrJ)$ be the least cardinality of a subfamily $\mathcal{A} \subset \scrI$ such that $\bigcup \mathcal{A} \not \in \scrJ$.
		\end{enumerate}
	\end{defi}
	
	Let $\nul$ and $\meager$ denote the Lebesgue null ideal and the meager ideal on $2^\omega$, respectively.
	Also for $\mu \in \MeC$, let $\nul_\mu=\{A\subset 2^\omega:\mu^*(A)=0\}$ denote its null ideal.
	Every nonzero $\mu\in\MeC$ can be normalized to the atomless probability measure $\mu/\mu(2^\omega)$ without changing its null ideal or absolute continuity relations.
	The zero measure satisfies $\nul_0=\Pow(2^\omega)$ and is a least element of $(\MeC,\ll)$, so allowing it does not affect the bounding and dominating numbers below.
	
	\begin{defi}
		Let $(P, \le)$ be a nonempty preordered set with no greatest element.
		Define $\frakd(P, \le)$ to be the least cardinality of a subset $A \subset P$ which is a dominating family in $P$: for every $p \in P$, there is $q \in A$ such that $p \le q$.
		Define $\frakb(P, \le)$ to be the least cardinality of an unbounded family $A \subset P$: for every $p \in P$, there is $q \in A$ such that $\neg (q \le p)$.
	\end{defi}
	
	Let $\le^*$ denote eventual domination on $\omega^\omega$.
	We abbreviate $\frakd(\omega^\omega,\le^*)$ and
	$\frakb(\omega^\omega,\le^*)$ by $\frakd$ and $\frakb$, respectively.
	For an infinite cardinal $\kappa$, let
	$\frakd_\kappa=\frakd(\kappa^\kappa,\le)$, where $\le$ denotes pointwise domination.
	
	An important fact is the following.
	
	\begin{fact}[folklore]
		$\frakb(\MeC, \ll) = \aleph_1$ and
		$\frakd(\MeC, \ll) = \frakc$.
	\end{fact}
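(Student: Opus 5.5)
The plan is to prove both equalities from a single combinatorial fact: there are many pairwise mutually singular atomless measures, and any one measure can be non-singular to only countably many of them. For the needed family I will use, for $p\in(0,1/2]$, the product (Bernoulli) measure $\beta_p$ on $2^\omega$ in which each coordinate equals $1$ with probability $p$. Each $\beta_p$ is an atomless probability measure. For $p\neq p'$, the strong law of large numbers gives
\[
L_p=\Bigl\{x:\lim_n \tfrac1n\textstyle\sum_{i<n}x(i)=p\Bigr\}
\]
with $\beta_p(L_p)=1$ and $\beta_{p'}(L_p)=0$, so $\beta_p\perp\beta_{p'}$. This produces a family $\mathcal F$ of $\frakc$ pairwise mutually singular members of $\MeC$.

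The key lemma is the counting step already used in the proof of Lemma~\ref{lem:decomposition}. Fix $\lambda\in\MeC$ and let $\mathcal F'\subseteq\mathcal F$ be the set of members not singular to $\lambda$. For each $\beta\in\mathcal F'$, let $f_\beta$ be the Radon--Nikodym derivative of the part of $\beta$ that is absolutely continuous with respect to $\lambda$. Each set $\{f_\beta>0\}$ then has positive $\lambda$-measure. These sets are pairwise $\lambda$-almost disjoint, because the members of $\mathcal F$ are pairwise singular. Since $\lambda$ is finite, $\mathcal F'$ is countable. I also need the elementary observation that if $0\neq\mu\ll\lambda$, then $\mu\not\perp\lambda$. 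Consequently, each $\lambda$ lies $\ll$-above at most countably many members of $\mathcal F$.

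For the dominating number, the upper bound $\frakd(\MeC,\ll)\le|\MeC|=\frakc$ holds because a Borel measure is determined by its values on the countably many clopen sets. For the lower bound, let $D$ be a dominating family. Every member of $\mathcal F$ is $\ll$ some $\lambda\in D$, and each $\lambda$ handles only countably many of them, so $\frakc=|\mathcal F|\le|D|\cdot\aleph_0$. Since $\frakc$ is uncountable, this gives $|D|\ge\frakc$. The same argument shows that $\MeC$ has no greatest element, so the definitions of $\frakb$ and $\frakd$ apply.

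For the bounding number, the inequality $\frakb(\MeC,\ll)\le\aleph_1$ comes from taking any $\aleph_1$ members of $\mathcal F$. Any $\lambda$ bounding all of them would be non-singular to uncountably many of them, which the key lemma rules out, so this subfamily is unbounded. For $\frakb\ge\aleph_1$, let $\{\mu_n\}$ be a countable family. Discard the zero measure and put
\[
\lambda=\sum_n 2^{-n}\mu_n/\mu_n(2^\omega).
\]
This $\lambda$ is a finite atomless measure with $\mu_n\ll\lambda$ for all $n$, so every countable family is bounded.

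None of these steps is a serious obstacle. The only point needing care is the countability argument via Radon--Nikodym derivatives, and that is identical to the one in Lemma~\ref{lem:decomposition}.
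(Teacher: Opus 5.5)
Your proof is correct and has the same structure as the paper's. Both use a family of $\frakc$ pairwise orthogonal atomless measures such that any single finite measure dominates only countably many of them, convex combinations to bound countable families, and $|\MeC|=\frakc$. The paper instead uses measures concentrated on $\frakc$ pairwise disjoint Cantor sets, so its count is simply that a finite measure gives positive mass to only countably many disjoint sets. Your $\beta_p$ are concentrated on the pairwise disjoint sets $L_p$, so you could skip the Radon--Nikodym step in the same way. Your version does prove the more general count of non-singular members, which is the form Lemma~\ref{lem:decomposition} needs.
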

	\begin{proof}
		Every countable family is bounded: after normalizing its
		nonzero members, take a convex combination with positive coefficients.
		For the reverse bounds, fix $\frakc$ pairwise disjoint Cantor
		subsets of $2^\omega$, using a homeomorphism
		$2^\omega\times2^\omega\cong2^\omega$, and put an atomless
		probability measure on each of them.
		A finite measure can dominate at most countably many of these
		measures, since it must assign positive measure to each of
		their disjoint supports.
		Thus any $\aleph_1$ of them form an unbounded family, and
		every dominating family has size at least $\frakc$.
		Finally, $|\MeC|=\frakc$ gives the remaining upper bound.
	\end{proof}
	
	\subsection{A remark on previous work}
	
	In the Sacks model, $\UN \subset [2^\omega]^{\le \aleph_1}$
	\cite[Theorem~1.1.4]{Ciesielski_Pawlikowski_2004}.
	If this model also satisfies $2^{\aleph_1}=\aleph_2$, then
	$|[2^\omega]^{\le\aleph_1}|=\aleph_2=\frakc$, so
	$\cof(\UN)\le\frakc$.
	On the other hand, $\aleph_1$ sets of size at most $\aleph_1$
	cannot cover $2^\omega$, so $\cov(\UN)\ge\aleph_2$.
	Consequently, $\cov(\UN)=\cof(\UN)=\frakc$ in this model.
	
	\subsection{Uniformity}
	
	This section identifies the uniformity number of the universally null ideal with the uniformity number of the null ideal.
	
	\begin{prop}\label{prop:uniformity}
		$\non(\UN) = \non(\nul)$.
	\end{prop}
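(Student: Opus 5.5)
We prove the two inequalities separately.

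The inequality $\non(\UN)\le\non(\nul)$ is immediate from $\UN\subseteq\nul$. A set $A\subseteq2^\omega$ of size $\non(\nul)$ with $A\notin\nul$ has positive outer measure for the usual product measure. That measure belongs to $\MeC$, so $A\notin\UN$.

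For $\non(\nul)\le\non(\UN)$, fix $A\subseteq2^\omega$ with $|A|<\non(\nul)$ and an arbitrary $\mu\in\MeC$; we show $\mu^*(A)=0$. The plan is to transfer $\mu$ to the standard measure $\lambda$ on $2^\omega$ by a Borel map. The standard fact we use is that for an atomless Borel probability measure $\mu$ (normalize first; the zero measure is trivial) there is a Borel map $\varphi:2^\omega\to2^\omega$ with $\varphi_*\mu=\lambda$ that is injective off a $\mu$-null set. Concretely, we realize it via the distribution function. Identify $2^\omega$ with $[0,1]$ through binary expansion, and let $F(x)=\mu(\{y:y\le_{\mathrm{lex}}x\})$. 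Since $\mu$ is atomless, $F$ is continuous and monotone, and $F_*\mu$ is Lebesgue measure on $[0,1]$. Moreover, $F$ fails to be injective only on intervals where it is constant, and these are $\mu$-null.

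Alternatively, and more cleanly, use the isomorphism theorem for measures. There is a Borel isomorphism $\psi$ between $(2^\omega,\mu)$ and $(2^\omega,\lambda)$ modulo null sets: there are Borel sets $N_1$ with $\mu(N_1)=0$ and $N_2$ with $\lambda(N_2)=0$ such that $\psi:2^\omega\setminus N_1\to2^\omega\setminus N_2$ is a Borel bijection carrying $\mu$ to $\lambda$.

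Given this, $\psi[A\setminus N_1]$ has cardinality less than $\non(\nul)$, so $\lambda^*(\psi[A\setminus N_1])=0$. Pick a Borel $\lambda$-null set $C\supseteq\psi[A\setminus N_1]$. Then $\psi^{-1}[C]\cup N_1$ is a Borel $\mu$-null set containing $A$. Hence $A\in\nul_\mu$ for every $\mu\in\MeC$, i.e.\ $A\in\UN$. This yields $\non(\UN)\ge\non(\nul)$.

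The only real content lies in the measure isomorphism step. If we avoid quoting the general isomorphism theorem, we must check directly that the distribution-function map $F$ is injective off a $\mu$-null set and pulls back Lebesgue-null sets to $\mu$-null sets. The second property is immediate from $F_*\mu=\lambda$ via preimages. For the first, note that the cardinality bound only needs $|F[A]|\le|A|$, which holds for any function. Injectivity is therefore unnecessary: since $F_*\mu$ equals Lebesgue measure on $[0,1]$, the set $F^{-1}[C]$ is a $\mu$-null Borel set containing $A$ whenever $C\supseteq F[A]$ is Lebesgue null. This simplifies the argument, and we use $F$ directly, transporting Lebesgue measure on $[0,1]$ back to $2^\omega$ by the binary expansion map, which identifies null sets up to countably many points.
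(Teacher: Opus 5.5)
Your proof is correct and matches the paper's argument: $\UN\subseteq\nul$ gives one inequality. For the other, you normalize (the zero measure being trivial) and use the measure isomorphism theorem to transfer smallness from $\nul$ to $\nul_\mu$ for every nonzero $\mu\in\MeC$. Your closing remark is also valid and slightly more elementary: any Borel map $\varphi$ with $\varphi_*\mu=\lambda$ suffices for this direction, for instance the distribution function, so injectivity is never needed.
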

	\begin{proof}
		By $\UN \subset \nul$, we have $\non(\UN) \le \non(\nul)$.
		For every nonzero $\mu \in \MeC$, normalization and the measure isomorphism theorem give $\non(\nul_\mu) = \non(\nul)$.
		For $\mu=0$, every subset of $2^\omega$ belongs to $\nul_\mu$.
		Thus if $\abs{A} < \non(\nul)$, then $A \in \nul_\mu$ for every $\mu \in \MeC$.
	\end{proof}
	
	\subsection{Lower bounds for \texorpdfstring{$\cof(\UN)$}{the cofinality}}
	
	We prove $\frakc\le\cof(\UN)$ by establishing the stronger
	statement that $X\in\UN$ implies $|X|<\cof(\UN)$.
	
	\begin{thm}\label{thm:lower_bound_of_cofinality}
		For every element $X \in \UN$, we have $\abs{X} < \cof(\UN)$.
	\end{thm}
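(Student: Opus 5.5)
The plan is a diagonalization. Suppose, towards a contradiction, that $X\in\UN$ and that $\mathcal F=\{Y_\alpha:\alpha<\kappa\}\subseteq\UN$ is cofinal with $\kappa\le\abs{X}$. If $X$ is countable there is nothing to prove, since $\cof(\UN)$ is uncountable: a countable family of members of $\UN$ has union in $\UN$, and singletons can be added to it. So assume $\kappa\le\abs{X}$ with $X$ infinite. Fix an injection $\alpha\mapsto x_\alpha\in X$ for $\alpha<\kappa$.

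We cannot simply pick points of $X\setminus Y_\alpha$, because this set may be empty. Instead we work in $2^\omega\times2^\omega$ and fix a homeomorphism $2^\omega\times2^\omega\cong2^\omega$ in $V$. This is harmless because universal nullity is invariant under Borel isomorphisms, as remarked after Corollary~\ref{cor:preservation}. Regard each $Y_\alpha$ as a universally null subset of the product.

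For each $\alpha$, consider the vertical section $(Y_\alpha)_{x_\alpha}=\{c:(x_\alpha,c)\in Y_\alpha\}$. This section is the preimage of $Y_\alpha$ under the Borel embedding $c\mapsto(x_\alpha,c)$, so it is universally null in $2^\omega$. In particular it is not all of $2^\omega$. Choose $c_\alpha\notin(Y_\alpha)_{x_\alpha}$ and set
\[
Z=\{(x_\alpha,c_\alpha):\alpha<\kappa\}.
\]
By construction $Z\not\subseteq Y_\alpha$ for every $\alpha$, so it remains only to show $Z\in\UN$. That contradicts cofinality of $\mathcal F$ and gives $\abs{X}<\cof(\UN)$.

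The main step is verifying $Z\in\UN$. Note that $Z$ is the graph of a function defined on a subset of $X$, so the projection $\pi$ to the first coordinate is injective on $Z$ and $\pi[Z]\subseteq X$. Let $\mu$ be an atomless finite Borel measure on the product. The pushforward $\pi_*\mu$ may have atoms, and there are only countably many of them. Let $L$ be the union of the corresponding vertical lines $\{a\}\times2^\omega$, and split $\mu=\mu\restrict L+\mu\restrict(2^\omega\times2^\omega\setminus L)$.

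For the first part: $Z$ meets each such line in at most one point, so $Z\cap L$ is countable. Since $\mu$ is atomless, $\mu^*(Z\cap L)=0$.

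For the second part: $\pi_*(\mu\restrict L^c)$ is atomless, since its atoms would be atoms of $\pi_*\mu$ off the chosen points. As $X\in\UN$, there is a Borel set $B\supseteq X$ with $\pi_*(\mu\restrict L^c)(B)=0$. Then $\pi^{-1}(B)$ is a Borel set containing $Z$ and is $\mu\restrict L^c$-null.

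Combining the two parts gives $\mu^*(Z)=0$. Hence $Z\in\UN$, completing the argument.
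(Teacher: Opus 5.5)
Your proposal is correct and follows the paper's proof essentially step for step. The paper also diagonalizes along the vertical Cantor sets $h[\{x_\alpha\}\times 2^\omega]$, which a universally null $A_\alpha$ cannot contain, and then shows the resulting graph is universally null by discarding the countably many atoms of the projected measure and applying $X\in\UN$ to the atomless remainder; your separate treatment of countable $X$ is harmless but unnecessary, since the diagonal argument already covers that case.
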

	\begin{proof}
		Fix $X\in\UN$.
		\begin{claim}
			For every function $f:X\to 2^\omega$, the graph
			$\Gamma_f=\{(x,f(x)):x\in X\}$ is universally null in $2^\omega\times 2^\omega$.
		\end{claim}
		\begin{proof}
			Let $\mu$ be an atomless Borel probability measure on $2^\omega\times 2^\omega$, and let
			$\nu(B)=\mu(B\times 2^\omega)$ for Borel $B\subset 2^\omega$.
			Although $\mu$ is atomless, $\nu$ may have atoms.
			The set $D=\{x\in 2^\omega:\nu(\{x\})>0\}$ is countable, and
			$\nu_0(B)=\nu(B\setminus D)$ defines an atomless finite Borel measure on $2^\omega$.
			There is a Borel set $N\supset X$ such that $\nu_0(N)=0$:
			if $\nu_0(2^\omega)>0$, apply $X\in\UN$ to the probability measure $\nu_0/\nu_0(2^\omega)$;
			otherwise, take $N=2^\omega$.
			Now
			\[
			\Gamma_f\subset ((N\setminus D)\times 2^\omega)
			\cup\{(x,f(x)):x\in X\cap D\}.
			\]
			The first set on the right is Borel and has $\mu$-measure
			$\nu(N\setminus D)=\nu_0(N)=0$.
			The second set is countable, hence Borel and $\mu$-null since $\mu$ is atomless.
			Thus $\Gamma_f$ has a Borel $\mu$-null superset.
			As $\mu$ was arbitrary, the claim follows; no measurability of $f$ is needed.
		\end{proof}
		The case $X=\emptyset$ is immediate, so suppose $X\ne\emptyset$ and put $\kappa=\abs{X}$.
		Enumerate $X=\{x_\alpha:\alpha<\kappa\}$ without repetitions.
		Let $\mathcal A\subset\UN$ be any nonempty family of size at most $\kappa$, and
		write $\mathcal A=\{A_\alpha:\alpha<\kappa\}$, allowing repetitions.
		Fix a homeomorphism $h:2^\omega\times 2^\omega\to 2^\omega$.
		For each $\alpha<\kappa$, the set $P_\alpha=h[\{x_\alpha\}\times 2^\omega]$ is a Cantor set
		and therefore supports an atomless Borel probability measure.
		Since $A_\alpha\in\UN$, it cannot contain $P_\alpha$.
		Choose $y_\alpha\in 2^\omega$ such that $h(x_\alpha,y_\alpha)\notin A_\alpha$, and define
		$f:X\to 2^\omega$ by $f(x_\alpha)=y_\alpha$.
		By the claim and the invariance of universal nullity under homeomorphisms,
		$Y=h[\Gamma_f]$ belongs to $\UN$.
		For every $\alpha<\kappa$, however,
		\[
		h(x_\alpha,y_\alpha)\in Y\setminus A_\alpha.
		\]
		Hence $\mathcal A$ is not cofinal in $\UN$.
		Thus no family of size at most $\kappa$ is cofinal, and $\abs{X}<\cof(\UN)$.
	\end{proof}

	\begin{cor}\label{cor:lower_bounds_of_cofinality}
		\begin{enumerate}
			\item $\frakc \le \cof(\UN)$.
			\item $\frakb < \cof(\UN)$.
			\item $\non(\nul) < \cof(\UN)$.
		\end{enumerate}
	\end{cor}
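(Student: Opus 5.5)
Each item will follow from Theorem~\ref{thm:lower_bound_of_cofinality} once we exhibit a universally null set of the relevant size. For a cardinal $\kappa$, if some $X\in\UN$ has $\abs{X}=\kappa$, then $\kappa<\cof(\UN)$. So the plan is to produce universally null sets of cardinality $\frakb$ and $\non(\nul)$. For item (1), it suffices to show that every $\kappa<\frakc$ is below $\cof(\UN)$.

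For (2), I would use Rec\l aw's theorem from the introduction: every set well-ordered by a Borel relation is universally null. Take an unbounded family $\{f_\alpha:\alpha<\frakb\}\subseteq\omega^\omega$ that is $\le^*$-increasing, the standard $\frakb$-scale. Transfer it to $2^\omega$ by a Borel isomorphism $\omega^\omega\cong2^\omega$; this preserves universal nullity. The relation $\le^*$ is Borel and well-orders this set, so the set is universally null of size $\frakb$, and hence $\frakb<\cof(\UN)$. For (3), I would use Grzegorek's theorem from the introduction, which gives a universally null set of cardinality $\non(\nul)$; then $\non(\nul)<\cof(\UN)$.

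For (1), the cofinality of an ideal containing singletons and covering $2^\omega$ is at least $\cov$, but we want $\frakc$, so more is needed. I would argue as follows. Since $\frakb\le\non(\nul)$ fails in general, I take $\frakb$ and $\non(\nul)$ separately. By (2), $\frakb<\cof(\UN)$, but $\frakc$ can be bigger than both. So instead I would show that every $\kappa<\frakc$ satisfies $\kappa<\cof(\UN)$. For this I would adapt the proof of Theorem~\ref{thm:lower_bound_of_cofinality}: given $\mathcal A\subseteq\UN$ with $\abs{\mathcal A}=\kappa<\frakc$, pick $\kappa$ distinct points $x_\alpha$. Then diagonalize with a function $f$ whose graph is universally null.

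This needs the set $\{x_\alpha\}$ to be in $\UN$, which may fail for large $\kappa$, and that is the main obstacle. The workaround is to replace the graph by a countable construction. Take $\kappa$ pairwise disjoint Cantor sets $P_\alpha$ indexed by $x_\alpha$, and pick $y_\alpha\in P_\alpha\setminus A_\alpha$. The resulting set $Y=\{y_\alpha\}$ need not be universally null. So I would rather obtain (1) directly from $\frakd(\MeC,\ll)=\frakc$, the folklore fact.

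The approach is to map a cofinal family in $\UN$ to a dominating family of measures. Suppose $\{A_\alpha:\alpha<\kappa\}$ is cofinal in $\UN$. Every singleton is in $\UN$, so every point lies in some $A_\alpha$. Take $\frakc$ pairwise disjoint Cantor sets as in the fact and a point $z_\beta$ in each. Since $\kappa<\frakc$, some single $A_\alpha$ contains $z_\beta$ for $\frakc$ many $\beta$, or more precisely for $\frakc$ many $\beta$ if $\cf(\frakc)>\kappa$.

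When $\cf(\frakc)\le\kappa$, I would instead choose the $z_\beta$ generically to avoid this issue. I expect this cofinality case split to be the delicate point. If it resists, I would fall back on proving $\frakc\le\cof(\UN)$ via $\cov(\UN)$-type counting: each $A\in\UN$ contains no perfect set, and $\frakc$ many disjoint perfect sets force $\frakc$ many members.
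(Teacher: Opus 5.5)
Your arguments for (2) and (3) are the paper's: Rec\l aw's theorem applied to a $\frakb$-scale, and Grzegorek's universally null set of size $\non(\nul)$, each fed into Theorem~\ref{thm:lower_bound_of_cofinality}. For (2), choose the scale with $f_\beta\not\le^* f_\alpha$ for $\alpha<\beta$, so that $\le^*$ really well-orders it.

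Item (1), however, is not proved. The pigeonhole step produces some $A_\alpha$ containing $\frakc$ many of the points $z_\beta$, but you never say what this contradicts. On its own it contradicts nothing: universally null sets of size $\frakc$ exist consistently (Corollary~\ref{cor:brendle_larson}, or Grzegorek's set under $\CH$). You also never connect the argument to $\frakd(\MeC,\ll)=\frakc$.

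Your fallback is wrong, not just incomplete. A member of the family can meet many of the disjoint perfect sets without containing any of them. Moreover, the counting you describe uses only that the family covers $2^\omega$ by sets with no perfect subsets, so it would prove $\cov(\UN)\ge\frakc$. Yet the model at the end of Section~\ref{sec:cardinal_invariants} has, for instance, $\cov(\UN)=\aleph_2<\frakc=\aleph_3$.

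The missing idea is to apply Theorem~\ref{thm:lower_bound_of_cofinality} to the members of the cofinal family themselves, not to a set you construct. Put $\kappa=\cof(\UN)$ and fix a cofinal family $\{A_\alpha:\alpha<\kappa\}\subseteq\UN$.
\begin{enumerate}
\item Each $A_\alpha$ is universally null, so $\abs{A_\alpha}<\kappa$.
\item Singletons are universally null, so cofinality forces $\bigcup_{\alpha<\kappa}A_\alpha=2^\omega$.
\item Hence $\frakc\le\kappa\cdot\kappa=\kappa$.
\end{enumerate}
This is plain cardinal arithmetic, so the case split on $\cf(\frakc)$ you anticipated never arises. You need neither the disjoint Cantor sets nor universally null sets of every size below $\frakc$. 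Your pigeonhole case $\cf(\frakc)>\kappa$ would also have closed, but only by invoking exactly this bound $\abs{A_\alpha}<\kappa$.
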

	\begin{proof}
		(1).
		Let $\kappa=\cof(\UN)$ and fix a cofinal family
		$\{A_\alpha:\alpha<\kappa\}\subset\UN$.
		By Theorem \ref{thm:lower_bound_of_cofinality}, $\abs{A_\alpha}<\kappa$ for every $\alpha<\kappa$.
		Since every singleton belongs to $\UN$, cofinality implies
		\[
		2^\omega=\bigcup_{\alpha<\kappa}A_\alpha.
		\]
		Therefore,
		\[
		\frakc=\left|\bigcup_{\alpha<\kappa}A_\alpha\right|
		\le\kappa\cdot\kappa=\kappa=\cof(\UN).
		\]
		
		(2).
		Choose a $\le^*$-increasing unbounded family
		$B=\{f_\alpha:\alpha<\frakb\}\subset\omega^\omega$ such that
		$f_\beta\not\le^* f_\alpha$ whenever $\alpha<\beta<\frakb$.
		The Borel relation $\le^*$ well-orders $B$.
		By Rec\l aw's theorem \cite[Theorem~1]{reclaw}, every set well-ordered by a Borel relation is universally null.
		Thus, identifying $\omega^\omega$ with $2^\omega$ by a Borel isomorphism, we obtain a set
		$X\in\UN$ of size $\frakb$.
		Theorem \ref{thm:lower_bound_of_cofinality} now yields $\frakb<\cof(\UN)$.
		
		(3).
		By Grzegorek's theorem \cite{grzegorek1980solution}, there is a set
		$X\in\UN$ of size $\non(\nul)$.
		Applying Theorem \ref{thm:lower_bound_of_cofinality} to $X$, we obtain
		$\non(\nul)<\cof(\UN)$.
	\end{proof}

	\begin{cor}
		There is no increasing cofinal sequence in $\UN$.
	\end{cor}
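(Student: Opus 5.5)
We argue by contradiction, combining Theorem~\ref{thm:lower_bound_of_cofinality} with the fact that $\UN$ is a $\sigma$-ideal. Suppose $\seq{A_\alpha:\alpha<\kappa}$ is a $\subseteq$-increasing sequence cofinal in $\UN$, where $\kappa$ is an ordinal. Passing to a cofinal subsequence, we may assume $\kappa$ is a regular cardinal; an increasing sequence indexed by a successor ordinal has a last element, and this case is handled separately below. The family $\{A_\alpha:\alpha<\kappa\}$ is then cofinal, so $\cof(\UN)\le\kappa$.

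We split into two cases according to whether $\kappa=\omega$.

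\emph{Case $\kappa=\omega$.} Put $A=\bigcup_n A_n$. Since $\UN$ is a $\sigma$-ideal, $A\in\UN$. Every member of $\UN$ is contained in some $A_n\subseteq A$, so $A$ is the largest element of $\UN$. It contains every singleton, so $A=2^\omega$. But $2^\omega\notin\UN$, since it carries the atomless product measure. This is a contradiction. The same argument applies when the sequence has a last element: that element would be the largest member of $\UN$, hence equal to $2^\omega$.

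\emph{Case $\kappa$ uncountable regular.} Because every singleton lies in $\UN$, cofinality gives $2^\omega=\bigcup_{\alpha<\kappa}A_\alpha$. By Theorem~\ref{thm:lower_bound_of_cofinality}, $|A_\alpha|<\cof(\UN)\le\kappa$ for each $\alpha$. Fix a real $x_\alpha\notin A_\alpha$ for each $\alpha$; such a real exists because $A_\alpha\ne2^\omega$. We aim to build a set in $\UN$ not contained in any $A_\alpha$.

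Try first the set $\{x_\alpha:\alpha<\kappa\}$. It may have size $\kappa$, and then it need not be universally null. So instead we use the monotonicity of the sequence.

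Fix any nonempty $X\in\UN$ (a singleton suffices). Then $X\subseteq A_{\alpha_0}$ for some $\alpha_0$. The sizes $|A_\alpha|$ are nondecreasing in $\alpha$ and bounded by $\kappa$, while $|2^\omega|=\frakc\le\cof(\UN)\le\kappa$ by Corollary~\ref{cor:lower_bounds_of_cofinality}(1).

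Apply Theorem~\ref{thm:lower_bound_of_cofinality} with the roles reversed. Its proof shows directly that no family of size at most $|X|$ is cofinal, for any $X\in\UN$. Cofinality of the sequence gives $\cof(\UN)\le\cf$-type bounds on the cofinal families. Moreover, any cofinal subfamily $\{A_{\alpha}:\alpha\in C\}$ with $C\subseteq\kappa$ unbounded is again cofinal. Since $\kappa$ is regular, every unbounded $C$ has size $\kappa$, so this gives no improvement. Hence we need an element of $\UN$ of size $\ge\kappa$.

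This is the main obstacle. Since $A_\alpha\subseteq A_\beta$ for $\alpha<\beta$ and $\bigcup_\alpha A_\alpha=2^\omega$, we get $\frakc=\sup_\alpha|A_\alpha|$. Each $|A_\alpha|<\kappa$, so $\kappa\ge\frakc$.

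If $\kappa>\frakc$, the sizes $|A_\alpha|\le\frakc$ stabilize. Moreover, only $\frakc$ many reals exist. Each real $x$ enters the sequence at a least stage $\alpha_x$, and $\sup_x\alpha_x<\kappa$ by regularity of $\kappa$ and $\frakc<\kappa$. So some $A_\alpha$ equals $2^\omega$, a contradiction.

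If $\kappa=\frakc$, then by Corollary~\ref{cor:lower_bounds_of_cofinality}(1) we have $\cof(\UN)=\frakc=\kappa$. Theorem~\ref{thm:lower_bound_of_cofinality} then yields $|A_\alpha|<\frakc$ for every $\alpha$, with the sequence strictly climbing to $\frakc$. Here the plan is to define $f$ as in the proof of Theorem~\ref{thm:lower_bound_of_cofinality}, but over a universally null set of size $\frakc$ obtained as a diagonal union.

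I expect this last subcase to be where the argument could fail. The fallback is to use regularity of $\frakc=\kappa$ together with the fact that $\frakc$ is regular here. Then every universally null set is small, so $\cov$-type counting only yields $\cof(\UN)=\frakc$, consistent with Theorem~\ref{thm:lower_bound_of_cofinality}. So the remaining task is to find a universally null set of full size $\frakc$ via the graph trick on some $A_\alpha$ of large cardinality.
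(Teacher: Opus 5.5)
Your reduction to a regular length $\kappa$ is correct, and so are the successor case, the case $\kappa=\omega$ and the case $\kappa>\frakc$. The proof has a genuine gap: the case $\kappa=\frakc$ is left open, and the tools you propose cannot close it.

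\begin{itemize}
\item The graph construction in the proof of Theorem~\ref{thm:lower_bound_of_cofinality} outputs a set of the same cardinality as its input. Applied to any $A_\alpha$, which has size $<\kappa$, it only defeats subfamilies of size $\le|A_\alpha|$. That is already the content of the theorem, and it never defeats the whole sequence.
\item A ``diagonal union'' of $\kappa$ many universally null sets is not available. Your regularity argument gives $\add(\UN)\ge\kappa$ and nothing more; indeed $\bigcup_\alpha A_\alpha=2^\omega$.
\item Corollary~\ref{cor:lower_bounds_of_cofinality}(1) is a non-strict inequality. It is compatible with $\cof(\UN)=\kappa=\frakc$.
\end{itemize}
What is missing is a source of a universally null set of cardinality $\kappa$. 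Separately, $\frakc=\sup_\alpha|A_\alpha|$ does not follow from $\bigcup_\alpha A_\alpha=2^\omega$; you only get $\frakc\le\kappa\cdot\sup_\alpha|A_\alpha|$.

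That source is Grzegorek's theorem, in the form of Corollary~\ref{cor:lower_bounds_of_cofinality}(3). Your argument for $\kappa>\frakc$ actually shows something general. For uncountable regular $\kappa$, any fewer than $\kappa$ members of $\UN$ all lie in a single $A_\alpha$, so $\add(\UN)\ge\kappa$. A non-universally-null set of size $\non(\UN)$ is a union of that many singletons, so $\add(\UN)\le\non(\UN)=\non(\nul)$ by Proposition~\ref{prop:uniformity}. Therefore
\[
\kappa\le\add(\UN)\le\non(\nul)<\cof(\UN)\le\kappa,
\]
which is a contradiction and disposes of every uncountable case at once. When $\kappa=\frakc$, this amounts to Grzegorek supplying a universally null set of size $\non(\nul)=\frakc$, which is exactly the set you were looking for. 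This is the paper's proof: an increasing cofinal sequence exists iff $\add(\UN)=\cof(\UN)$, while $\add(\UN)\le\non(\nul)<\cof(\UN)$.
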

	\begin{proof}
		The existence of such a sequence is equivalent to $\add(\UN)=\cof(\UN)$.
		However, Corollary~\ref{cor:lower_bounds_of_cofinality}(3) gives
		$\add(\UN)\le\non(\UN)=\non(\nul)<\cof(\UN)$.
	\end{proof}
	
	\subsection{Generalizing Yorioka's proof}
	
	This section adapts Yorioka's argument for the strong measure zero ideal to the universally null ideal, proving that $\cof(\UN)$ can be computed as a generalized dominating number at $\kappa$ under the assumption $\kappa=\add(\nul)=\frakc$.
	
	\begin{thm}
		If $\add(\nul) = \frakc$, then $\cof(\UN) = \frakd_\frakc$.
	\end{thm}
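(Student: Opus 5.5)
Write $\kappa=\frakc=\add(\nul)$; then $\kappa$ is regular, since $\add(\nul)$ is. I will prove the two inequalities $\cof(\UN)\le\frakd_\kappa$ and $\frakd_\kappa\le\cof(\UN)$ separately, following Yorioka's scheme for $\SN$. The basic tool is a representation of $\UN$ as a ``$\kappa$-indexed'' family. Using $\frakd(\MeC,\ll)=\frakc$ (the folklore fact), fix a $\ll$-dominating family $\{\mu_\alpha:\alpha<\kappa\}$ of atomless probability measures. A set $X$ is universally null iff $\mu_\alpha^*(X)=0$ for all $\alpha$, because $\mu\ll\mu_\alpha$ transfers nullness. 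For each $\alpha$, $\cof(\nul_{\mu_\alpha})=\cof(\nul)=\kappa$ by the measure isomorphism theorem, since $\add(\nul)=\frakc$ forces $\cof(\nul)=\frakc$. So fix Borel $\mu_\alpha$-null sets $\{N_{\alpha,\beta}:\beta<\kappa\}$ cofinal in $\nul_{\mu_\alpha}$. Using $\add(\nul_{\mu_\alpha})=\kappa$, I will arrange that the sequence is $\subseteq$-increasing in $\beta$ by replacing $N_{\alpha,\beta}$ with a Borel null cover of $\bigcup_{\gamma\le\beta}N_{\alpha,\gamma}$.

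For the upper bound, given $g\in\kappa^\kappa$ put $Y_g=\bigcap_{\alpha<\kappa}\bigcup_{\gamma\le\alpha}N_{\gamma,g(\gamma)}$. I would rather use a Yorioka-style variant: for each $\alpha$, let $Z_g^\alpha$ be a $\mu_\alpha$-null Borel cover of $\bigcup_{\gamma\le\alpha}(\text{a }\mu_\alpha\text{-null cover of }N_{\gamma,g(\gamma)}\cap\ldots)$. The cleanest choice is $Y_g=\bigcap_\alpha N_{\alpha,g(\alpha)}$, which is in $\UN$ since it is $\mu_\alpha$-null for each $\alpha$. If $X\in\UN$, define $h_X(\alpha)=$ least $\beta$ with $X\subseteq N_{\alpha,\beta}$. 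If $h_X\le g$ pointwise, then monotonicity gives $X\subseteq Y_g$. Hence a pointwise dominating family in $\kappa^\kappa$ of size $\frakd_\kappa$ yields a cofinal family, and $\cof(\UN)\le\frakd_\kappa$. This direction is routine.

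For the lower bound I need, for each $f\in\kappa^\kappa$, a set $X_f\in\UN$ such that from any $A\in\UN$ one can read off a function $\varphi_A\in\kappa^\kappa$ with the following property: $X_f\subseteq A$ implies $f\le\varphi_A$, or at least $f\le\varphi_A$ on a set that can be handled by coding. Then a cofinal family $\{A_i\}$ gives the dominating family $\{\varphi_{A_i}\}$. The plan mimics the proof of Theorem~\ref{thm:lower_bound_of_cofinality}. Fix disjoint Cantor sets $P_{\alpha,\beta}$ ($\alpha,\beta<\kappa$) via $h:2^\omega\times2^\omega\to2^\omega$, using a partition of a copy of $2^\omega$ into $\kappa$ many Cantor sets indexed by $\kappa\times\kappa$. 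Let $\nu_{\alpha,\beta}$ be an atomless probability measure on $P_{\alpha,\beta}$, and enumerate each $P_{\alpha,\beta}=\{p_{\alpha,\beta,\xi}:\xi<\kappa\}$. For $A\in\UN$, $A\cap P_{\alpha,\beta}$ is $\nu_{\alpha,\beta}$-null. Since $\add(\nul)=\kappa$, every set of size $<\kappa$ is null, while $\non(\nul)=\kappa$. The key design is a set $X_f$ that picks, for each $\alpha$, points in $P_{\alpha,\beta}$ for all $\beta\le f(\alpha)$, say a single point $x_{\alpha,\beta}$. Any $A\supseteq X_f$ then meets $P_{\alpha,\beta}$ for all $\beta\le f(\alpha)$. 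To make this force $f(\alpha)\le\varphi_A(\alpha)$, I want to choose the points diagonally against a fixed cofinal structure. Using the enumeration $\{N_{\alpha,\beta}\}$ style cofinal families for each $\nu_{\alpha,\beta}$, choose $x_{\alpha,\beta}\in P_{\alpha,\beta}$ outside $\bigcup_{\beta'<\beta}M_{\alpha,\beta,\beta'}$, where $M_{\alpha,\beta,\beta'}$ ranges over fewer than $\kappa$ null sets. This is possible since $\add(\nul_{\nu_{\alpha,\beta}})=\kappa$.

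$X_f$ is universally null by the graph argument in the Claim of Theorem~\ref{thm:lower_bound_of_cofinality}, adapted to ``a function with values in sets of size $<\kappa$.'' This needs that a union of $<\kappa=\add(\nul)$ sets in $\UN$ is in $\UN$, i.e.\ $\add(\UN)\ge\add(\nul)$, which I must check. I do not expect it to be immediate: fibers over the measure's marginal atoms are small sets, hence null under any atomless measure by $\add(\nul)=\kappa$, and the remaining part is handled as in the Claim. The main obstacle is the coding step, getting $f\le^{\text{pointwise}}\varphi_A$ rather than a weaker relation. My first attempt: define $\varphi_A(\alpha)=\sup\{\beta:\text{the canonical point }x_{\alpha,\beta}\in A\}$, but that sup may be $\kappa$. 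Instead I would define $\varphi_A(\alpha)$ via the least index $\beta$ with $A\cap P_\alpha\subseteq N'_{\alpha,\beta}$ in a fixed increasing cofinal sequence for a measure on $P_\alpha$. I would choose $x_{\alpha,\beta}\notin N'_{\alpha,\beta}$ so that $x_{\alpha,f(\alpha)}\in A$ forces $\varphi_A(\alpha)>f(\alpha)$. With this choice a single point per $\alpha$ suffices, and the construction becomes exactly the graph construction of the earlier Claim.
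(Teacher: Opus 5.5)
There is a genuine gap. Your upper bound is the paper's argument: your sets $\bigcap_\alpha N_{\alpha,g(\alpha)}$ are its $A_F$. The Tukey-style scheme for the lower bound is also sound. What fails is the step ``$X_f$ is universally null by the graph argument in the Claim,'' as you have set it up.

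The Claim in the proof of Theorem~\ref{thm:lower_bound_of_cofinality} needs the set of base points to be universally null. Its proof covers the graph by $(N\setminus D)\times2^\omega$ plus countably many points. Here $N$ is a Borel set containing the base and null for the atomless part of the first marginal, and such an $N$ exists only because the base is in $\UN$. Your Cantor sets come from a partition of a copy of $2^\omega$. So they are fibers $P_\alpha=h[\{z_\alpha\}\times2^\omega]$ over an arbitrary base $Z=\{z_\alpha:\alpha<\kappa\}$, possibly all of $2^\omega$, and then $X_f$ can fail to be universally null. For example:
\begin{enumerate}
\item let $\nu_\alpha$ be the image of Lebesgue measure under $y\mapsto h(z_\alpha,y)$;
\item fix one increasing cofinal sequence $\seq{N_\beta:\beta<\kappa}$ in $\nul$;
\item take $N'_{\alpha,\beta}=h[\{z_\alpha\}\times N_\beta]$, which is cofinal among the $\nu_\alpha$-null subsets of $P_\alpha$ (all that $\varphi_A$ uses);
\item take $x_{\alpha,\beta}=h(z_\alpha,c_\beta)$ with a fixed $c_\beta\notin N_\beta$.
\end{enumerate}
These are admissible choices in your construction. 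Yet for constant $f\equiv\beta$ the set $X_f=h[Z\times\{c_\beta\}]$ is homeomorphic to $Z$, hence a Cantor set when $Z=2^\omega$.

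The missing ingredient is a universally null base $Z$ of size $\kappa=\frakc$. Such a set need not exist in $\ZFC$; the paper shows it does not exist in the Sacks or $\PTfg$ models. Under $\add(\nul)=\frakc$, however, it is supplied by Grzegorek's theorem, since $\non(\nul)=\frakc$, or by Rec\l aw's theorem, since $\frakb=\frakc$.

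With fibers over such a $Z$, your final one-point-per-fiber version goes through. $X_f\in\UN$ by the Claim, and $A\cap P_\alpha$ is $\nu_\alpha$-null for every $A\in\UN$. Moreover $X_f\subseteq A$ implies $f(\alpha)<\varphi_A(\alpha)$ for all $\alpha$, so $\{\varphi_A:A\in\mathcal B\}$ is dominating whenever $\mathcal B$ is cofinal. The detour through $\add(\nul)\le\add(\UN)$ is then unnecessary. It is in any case immediate, since $\nul_\mu$ has additivity $\add(\nul)$ for every nonzero atomless $\mu$.

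The repaired argument is a genuinely different route from the paper's. It is the proof of Theorem~\ref{thm:lower_bound_of_cofinality} with the single avoided point in each fiber replaced by a $\kappa$-indexed increasing cofinal sequence of null sets. The coordinates then do not interact, at the price of importing a universally null set of size $\frakc$.

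The paper instead follows Yorioka. It builds the dominating measures $\mu_\alpha$ together with the matrix $A_\alpha^\beta$, using $\add(\meager)=\kappa$ to find a perfect set inside $\bigcap_{\gamma<\alpha}A_\gamma^0$ and so secure condition (3). It then picks $x_\alpha\in\bigcap_{\gamma<\alpha}A_\gamma^{G(\gamma)}\setminus A_\alpha^{F(\alpha)}$. Universal nullity of $\{x_\alpha:\alpha<\kappa\}$ is built into the construction, so no external universally null set of size $\frakc$ is needed.
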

	\begin{proof}
		This proof is based on Yorioka's proof that CH implies $\cof(\mathcal{SN}) = \frakd_{\aleph_1}$ in \cite{yorioka2002cofinality}.
		
		Let $\kappa := \add(\nul) = \frakc$; in particular, $\kappa$ is regular.
		The usual inequalities between cardinal invariants give
		\[
		\add(\meager)=\cof(\nul)=\non(\nul)=\kappa.
		\]
		Build a cofinal family $\seq{\mu_\alpha : \alpha < \kappa}$ in $(\MeC, \ll)$ and a matrix $\seq{A_\alpha^\beta : \alpha, \beta < \kappa}$ such that
		\begin{enumerate}
			\item $A_\alpha^\beta \subset 2^\omega$ is a dense $G_\delta$ subset such that $\mu_\alpha(A_\alpha^\beta) = 0$.
			\item For each $\alpha<\kappa$, the sequence $\seq{A_\alpha^\beta: \beta < \kappa}$ is increasing and cofinal in $\nul_{\mu_\alpha}$.
			\item For each $\alpha<\kappa$ and each $B\in\nul_{\mu_\alpha}$, we have $\bigcap_{\gamma<\alpha} A_\gamma^0 \setminus B \ne \emptyset$.
		\end{enumerate}
		We claim that such a sequence and a matrix exist.
		Let $\seq{\mu^*_\alpha : \alpha < \kappa}$ be a cofinal family in $(\MeC, \ll)$.
		Assume we have constructed $\seq{\mu_\alpha : \alpha < \alpha'}$ and $\seq{A_\alpha^\beta : \alpha < \alpha', \beta < \kappa}$.
		By $\add(\meager) = \kappa$, the set $\bigcap_{\gamma<\alpha'} A_\gamma^0$ is comeager, so it contains a nonempty perfect subset $P$.
		Choose an atomless Borel probability measure $\nu\in\MeC$ with $\nu(P)=1$, and let $\mu_{\alpha'} := \mu_{\alpha'}^* + \nu$.
		Then $\mu_{\alpha'}$ is a nonzero finite measure and $\mu_{\alpha'}^*\ll\mu_{\alpha'}$, ensuring cofinality of the resulting sequence.
		If $B\in\nul_{\mu_{\alpha'}}$, then $\nu^*(B)=0$, so $P\setminus B\ne\emptyset$; this gives (3).
		Normalization and the measure isomorphism theorem yield $\add(\nul_{\mu_{\alpha'}})=\cof(\nul_{\mu_{\alpha'}})=\kappa$.
		Since $\mu_{\alpha'}$ is atomless and finite, every $\mu_{\alpha'}$-null set is contained in a dense $G_\delta$ $\mu_{\alpha'}$-null set.
		Thus we can recursively choose an increasing cofinal sequence $\seq{A_{\alpha'}^\beta : \beta < \kappa}$ satisfying (1) and (2).
		This finishes the description of the construction.
		For every $F\in\kappa^\kappa$, put
		$A_F=\bigcap_{\alpha<\kappa}A_\alpha^{F(\alpha)}$.
		Cofinality of the family $\seq{\mu_\alpha:\alpha<\kappa}$
		implies $A_F\in\UN$: every $\mu\in\MeC$ is absolutely
		continuous with respect to some $\mu_\alpha$, and
		$A_F\subseteq A_\alpha^{F(\alpha)}$ is $\mu_\alpha$-null.
		
		\begin{claim}
			For every $F \colon \kappa \to \kappa$, there is $G \colon \kappa \to \kappa$ and $\seq{x_\alpha : \alpha< \kappa}$ satisfying:
			\begin{enumerate}[label=(\Alph*)]
				\item $F \le G$ (pointwise domination).
				\item For every $\alpha<\kappa$, we have $\{x_\gamma : \gamma \le \alpha \} \subset A_{\alpha}^{G(\alpha)}$.
				\item For every $\alpha<\kappa$, $x_\alpha \in \bigcap_{\gamma< \alpha} A_\gamma^{G(\gamma)} \setminus A_\alpha^{F(\alpha)}$.
			\end{enumerate}
		\end{claim}
		\begin{proof}
			Assume we have constructed $\seq{G(\gamma), x_\gamma : \gamma < \alpha}$.
			By (3), there is $x_\alpha \in 2^\omega$ such that $x_\alpha \in \bigcap_{\gamma<\alpha} A_\gamma^0 \setminus A_\alpha^{F(\alpha)}$.
			Since $\bigcap_{\gamma<\alpha} A_\gamma^0 \subset \bigcap_{\gamma<\alpha} A_\gamma^{G(\gamma)}$, we have (C).
			Take $G(\alpha) < \kappa$ above $F(\alpha)$ such that $\{x_\gamma : \gamma \le \alpha \} \subset A_\alpha^{G(\alpha)}$. Here, we used $\kappa=\non(\nul)$.
		\end{proof}
		Now assume $\cof(\UN) < \frakd_\kappa$.
		Then there is a basis $\mathcal{B} \subset \UN$ with $\abs{\mathcal{B}} < \frakd_\kappa$.
		For each $B \in \mathcal{B}$, we take $F_B \colon \kappa \to \kappa$ such that $B \subset \bigcap_{\gamma< \kappa} A_\gamma^{F_B(\gamma)}$.
		Since $\abs{\mathcal{B}} < \frakd_\kappa$, we can take $F \colon \kappa \to \kappa$ such that $F \not \le F_B$ for every $B \in \mathcal{B}$.
		By the claim above, we can take $G  \colon \kappa \to \kappa$ and $\seq{x_\alpha : \alpha< \kappa}$ satisfying (A), (B) and (C).
		By (B) and (C), we have $\{x_\alpha : \alpha \in \kappa\} \subset \bigcap_{\gamma<\kappa} A_\gamma^{G(\gamma)}$. Thus $\{x_\alpha : \alpha \in \kappa\} \in \UN$.
		
		Also we now show $\{x_\alpha : \alpha \in \kappa\} \not \subset B$ for every $B \in \mathcal{B}$.
		Fix $B \in \mathcal{B}$. By $F \not \le F_B$, there is $\gamma^* < \kappa$ such that $F_B(\gamma^*) < F(\gamma^*)$.
		Then,
		$B \subset \bigcap_{\gamma< \kappa} A_\gamma^{F_B(\gamma)} \subset A_{\gamma^*}^{F_B(\gamma^*)} \subset A_{\gamma^*}^{F(\gamma^*)}$.
		On the other hand, by (C), we have $x_{\gamma^*} \not \in A_{\gamma^*}^{F(\gamma^*)}$.
		
		This contradicts the assumption that $\mathcal B$ is a basis for $\UN$.
		Thus we have $\frakd_\kappa \le \cof(\UN)$.
		
		For the reverse inequality, take a dominating family
		$D\subseteq\kappa^\kappa$ of size $\frakd_\kappa$.
		Given $B\in\UN$, choose $F_B$ such that
		$B\subseteq A_{F_B}$, and then choose $F\in D$ with
		$F_B\le F$.  The rows of the matrix are increasing, so
		$B\subseteq A_{F_B}\subseteq A_F$.
		Thus $\{A_F:F\in D\}$ is cofinal in $\UN$, proving
		$\cof(\UN)\le\frakd_\kappa$.
	\end{proof}
	
	\subsection{A basic result on additivity}\label{sec:basic_add}
	
	This section studies the additivity of the universally null ideal, relating $\add(\UN)$ to the mixed invariant $\add(\UN,\nul)$.
	
	\begin{rmk}
		Let $\scrI \subset \scrI' \subset \scrJ' \subset \scrJ$ be $\sigma$-ideals.
		Then $\add(\scrI', \scrJ') \le \add(\scrI, \scrJ)$.
	\end{rmk}
	
	\begin{fact}[measure isomorphism theorem]
		Let $A \subset 2^\omega$.
		Then $A\in\UN$ if and only if $f[A]\in\nul$ for every
		Borel isomorphism $f:2^\omega\to2^\omega$.
	\end{fact}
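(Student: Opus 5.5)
The forward direction is immediate. If $A\in\UN$ and $f$ is a Borel isomorphism, let $\lambda$ be the standard product measure and put $\mu=(f^{-1})_*\lambda$, so that $\mu(B)=\lambda(f[B])$ for Borel $B$. Since $f$ is a bijection and $\lambda$ is atomless, $\mu$ is an atomless Borel probability measure. By universal nullity there is a Borel $B\supseteq A$ with $\mu(B)=0$. Then $f[B]$ is Borel, because Borel isomorphisms map Borel sets to Borel sets, and it satisfies $f[A]\subseteq f[B]$ and $\lambda(f[B])=\mu(B)=0$. Hence $f[A]\in\nul$.

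For the converse, assume $f[A]\in\nul$ for every Borel isomorphism $f$, and fix an atomless finite Borel measure $\mu$. The zero measure is trivial, so normalize to get $\mu(2^\omega)=1$. The plan is to invoke the classical measure isomorphism theorem (e.g.\ Kechris, Theorem~17.41): for an atomless Borel probability measure $\mu$ on an uncountable Polish space there is a Borel isomorphism $f:2^\omega\to2^\omega$ with $f_*\mu=\lambda$, i.e.\ $\lambda(f[B])=\mu(B)$ for all Borel $B$. By hypothesis $f[A]\subseteq N$ for some Borel $\lambda$-null $N$. Put $B=f^{-1}[N]$; it is Borel, contains $A$ because $f$ is a bijection, and has $\mu(B)=\lambda(N)=0$. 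So $\mu^*(A)=0$, and since $\mu$ was arbitrary, $A\in\UN$.

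The only real input is the isomorphism theorem itself, and the main point to check is its exact form. It should give a Borel automorphism of the whole space $2^\omega$, not merely an isomorphism between conull subsets. If only a mod-null version is available, I would repair it as follows. Remove an uncountable Borel $\mu$-null set from the domain and an uncountable Borel $\lambda$-null set from the range; each removed set is uncountable Borel and hence Borel isomorphic to $2^\omega$. Enlarge the exceptional sets by these pieces, then glue a Borel bijection between the two exceptional sets, which is possible because uncountable standard Borel spaces are Borel isomorphic. The resulting map is a Borel isomorphism of $2^\omega$ that still pushes $\mu$ to $\lambda$, since the modified parts carry no measure on either side.
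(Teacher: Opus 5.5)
Your proof is correct and follows the paper's argument: the forward direction pulls $\lambda$ back along $f$ to an atomless measure, and the converse uses the measure isomorphism theorem to get a Borel automorphism $f$ with $f_*\mu=\lambda$, then pulls back a Borel null hull of $f[A]$. Your fallback paragraph is unnecessary. Kechris's Theorem~17.41 already gives a Borel isomorphism of the whole space onto $[0,1]$ carrying $\mu$ to Lebesgue measure, and composing it with the inverse of the corresponding map for $\lambda$ yields the required automorphism of $2^\omega$.
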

	\begin{proof}
		For the forward implication, pull back the usual product measure
		along $f$.
		Conversely, for any atomless Borel probability measure $\mu$,
		the measure isomorphism theorem gives a Borel isomorphism $f$
		sending $\mu$ to the usual product measure.
		Then $f[A]\in\nul$ implies $\mu^*(A)=0$.
	\end{proof}
	
	\begin{lem}\label{lem:unmapstoun}
		Let $A \in \UN$.
		Then $f[A]\in\UN$ for every Borel isomorphism
		$f:2^\omega\to2^\omega$.
	\end{lem}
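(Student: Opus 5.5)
The plan is to pull every atomless measure back along $f$. Fix $A\in\UN$, a Borel isomorphism $f:2^\omega\to2^\omega$, and an atomless Borel probability measure $\mu$ on $2^\omega$. I want to show $\mu^*(f[A])=0$. To do this, define the pullback measure $\nu=f^{-1}_*\mu$ on Borel sets by
\[
\nu(B)=\mu(f[B]).
\]
This is well defined because $f$ is a Borel isomorphism, so $f[B]=(f^{-1})^{-1}[B]$ is Borel whenever $B$ is. Countable additivity transfers directly because $f$ is a bijection.

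The next step is to check that $\nu$ is atomless. For every $x$ we have $\nu(\{x\})=\mu(\{f(x)\})=0$, since $\mu$ is atomless. For finite Borel measures on a Polish space, vanishing on all singletons is the same as being atomless: every atom of a finite Borel measure on a separable metric space concentrates on a single point. So $\nu\in\MeC$.

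Now apply $A\in\UN$ to $\nu$. This gives a Borel set $N\supseteq A$ with $\nu(N)=0$. Then $f[N]$ is Borel, $f[N]\supseteq f[A]$, and
\[
\mu(f[N])=\nu(N)=0.
\]
Hence $\mu^*(f[A])=0$. Since $\mu$ was arbitrary, $f[A]\in\UN$.

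The only point needing care is the atomless claim. If the paper's convention defines atomless as ``singletons are null,'' that step is immediate. Otherwise the point-concentration fact for atoms should be cited, and I expect this to be the only mild obstacle. Alternatively, one can bypass it entirely with the preceding Fact: for any Borel isomorphism $g$, the composite $g\circ f$ is a Borel isomorphism, so $g[f[A]]=(g\circ f)[A]\in\nul$ because $A\in\UN$. By the Fact again, this gives $f[A]\in\UN$. I would present this second argument as the main proof, since it uses only the stated Fact, and mention the direct pullback argument as a remark.
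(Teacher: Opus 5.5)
Your main argument, composing with an arbitrary Borel isomorphism $g$ and applying both directions of the preceding Fact to $g\circ f$, is exactly the paper's proof. Your direct pullback argument via $\nu(B)=\mu(f[B])$ is also correct, and it has the small advantage of not relying on the measure isomorphism theorem.
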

	\begin{proof}
		Apply the preceding fact to compositions of Borel isomorphisms.
	\end{proof}
	
	\begin{prop}
		$\add(\UN, \nul) = \add(\UN)$ holds.
	\end{prop}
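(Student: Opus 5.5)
We have to prove $\add(\UN,\nul)=\add(\UN)$. One inequality is immediate from the preceding remark. Taking $\scrI=\scrI'=\UN$, $\scrJ'=\UN$ and $\scrJ=\nul$ gives $\add(\UN)=\add(\UN,\UN)\le\add(\UN,\nul)$, since $\UN\subset\nul$. Concretely: a family in $\UN$ whose union is not in $\nul$ certainly has a union that is not in $\UN$.

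For the reverse inequality $\add(\UN,\nul)\le\add(\UN)$, we argue as follows. Let $\mathcal A\subset\UN$ with $|\mathcal A|=\add(\UN)$ and $\bigcup\mathcal A\notin\UN$. By the measure isomorphism fact stated just above, there is a Borel isomorphism $f:2^\omega\to2^\omega$ such that $f[\bigcup\mathcal A]\notin\nul$. Now consider the family
\[
\mathcal A'=\{f[A]:A\in\mathcal A\}.
\]
By Lemma~\ref{lem:unmapstoun}, each $f[A]$ belongs to $\UN$. Since $f$ is a bijection,
\[
\bigcup\mathcal A'=f\Bigl[\bigcup\mathcal A\Bigr]\notin\nul.
\]
Finally $|\mathcal A'|\le|\mathcal A|=\add(\UN)$, so $\add(\UN,\nul)\le\add(\UN)$.

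There is no real obstacle here. The only point needing care is that the witness $f$ from the measure isomorphism fact depends on the union $\bigcup\mathcal A$ and not on the individual members of $\mathcal A$. Lemma~\ref{lem:unmapstoun} is exactly what guarantees that applying this single $f$ keeps every member inside $\UN$. We also use that $\add(\UN)$ is well defined: $\UN$ is a proper $\sigma$-ideal containing the singletons, because $2^\omega\notin\UN$.
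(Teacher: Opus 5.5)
Your proof is correct and is essentially the paper's argument. Both get one inequality from $\UN\subseteq\nul$, and the other from the Borel-isomorphism characterization of $\UN$ together with Lemma~\ref{lem:unmapstoun}. The only difference is framing: the paper shows that every union of fewer than $\add(\UN,\nul)$ members of $\UN$ stays in $\UN$, while you transport a witness family by a single $f$, which is the contrapositive of the same reasoning.
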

	\begin{proof}
		The inequality $\add(\UN)\le\add(\UN,\nul)$ follows
		from $\UN\subseteq\nul$.
		For the reverse inequality, fix a family
		$\{A_i:i<\kappa\}\subseteq\UN$ of size
		$\kappa<\add(\UN,\nul)$, and put $A=\bigcup_{i<\kappa}A_i$.
		We show that $A\in\UN$.
		Let $f:2^\omega\to2^\omega$ be a Borel isomorphism.
		Then, by Lemma \ref{lem:unmapstoun}, we have $f(A_i) \in \UN$ for every $i < \kappa$.
		Since $\kappa<\add(\UN,\nul)$, we have $f(A)\in\nul$.
		The preceding characterization now gives $A\in\UN$.
	\end{proof}
	
	\begin{cor}
		$\add(\nul) \le \add(\UN)$.
	\end{cor}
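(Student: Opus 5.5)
The corollary follows from the preceding proposition $\add(\UN,\nul)=\add(\UN)$ together with the monotonicity remark for mixed additivity numbers. Take the chain of $\sigma$-ideals $\scrI=\nul$, $\scrI'=\UN$, $\scrJ'=\nul$, $\scrJ=\nul$ on $2^\omega$. The inclusions $\nul\subseteq\UN$ fail, so that ordering is the wrong way round. Instead I would argue directly: $\add(\UN,\nul)$ is the least size of a family of universally null sets whose union is not Lebesgue null. The Remark with $\scrI=\UN\subseteq\scrI'=\nul\subseteq\scrJ'=\nul\subseteq\scrJ=\nul$ gives $\add(\nul,\nul)\le\add(\UN,\nul)$, that is, $\add(\nul)\le\add(\UN,\nul)$. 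This holds simply because a family of fewer than $\add(\nul)$ universally null sets is in particular a family of null sets, so its union is null.

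Combining this with the proposition gives
\[
\add(\nul)\le\add(\UN,\nul)=\add(\UN).
\]

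The only point to check is that the Remark applies: all four ideals are $\sigma$-ideals with $\UN\subseteq\nul$, which was noted in the introduction. There is no real obstacle. The substantive work was already done in the proposition, via the measure isomorphism characterization (a set lies in $\UN$ iff all its Borel isomorphic images are null) and Lemma~\ref{lem:unmapstoun}.
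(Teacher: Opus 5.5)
Your argument is correct and is the paper's proof: $\UN\subseteq\nul$ gives $\add(\nul)\le\add(\UN,\nul)$, and the proposition $\add(\UN,\nul)=\add(\UN)$ finishes it. Delete the abandoned first instantiation of the Remark with the chain in the wrong order; it adds nothing.
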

	\begin{proof}
		Since $\UN\subseteq\nul$, we have
		$\add(\nul)\le\add(\UN,\nul)=\add(\UN)$.
	\end{proof}
	
	\subsection{Keeping the additivity small}
	
	Brendle, Cardona, and Mej\'{\i}a \cite{brendle2025separating}
	proved a goodness theorem for Boolean algebras carrying a
	strictly positive finitely additive probability measure and
	used it to keep $\add(\SN)$ small in finite support iterations.
	We refine their argument to obtain an upper bound for
	$\add(\SN,\nul)$ in the same iterations.
	
	Due to results in Section \ref{sec:basic_add} and the well-known inequality $\add(\nul) \le \add(\SN)$, we have the following inequalities.
	
	\[
	\begin{tikzpicture}
		\node (addN) at (0, -1.5) {$\add(\nul)$};
		\node (addUN) at (5, 0) {$\add(\UN) = \add(\UN, \nul)$};
		\node (addSN) at (5, -3) {$\add(\SN)$};
		\node (addSN_N) at (10, -1.5) {$\add(\SN, \nul)$};
		
		\draw[thick,->] (addN) to (addUN);
		\draw[thick,->] (addN) to (addSN);
		\draw[thick,->] (addUN) to (addSN_N);
		\draw[thick,->] (addSN) to (addSN_N);
	\end{tikzpicture}
	\]
	
	Thus an upper bound for $\add(\SN,\nul)$ also bounds
	$\add(\UN)$ and $\add(\SN)$.
	We use this observation to prove the consistency of
	$\add(\UN)<\cov(\UN)<\non(\UN)<\cof(\UN)$.
	
	We use Polish relational systems and goodness for them;
	see \cite[Sections~3--4]{brendle2025separating} for the definitions.
	
	\begin{defi}
		Let $\baA$ be a Boolean algebra.
		$\mu \colon \baA \to [0, 1]$ is called a \textit{pfam} (strictly positive finitely additive probability measure) if it satisfies the following:
		\begin{enumerate}
			\item $\mu(1) = 1$,
			\item $\mu(a \vee b) = \mu(a) + \mu(b)$ if $a \wedge b = 0$,
			\item $\mu(a) = 0$ if and only if $a = 0$.
		\end{enumerate}
		We say that the pfam $\mu$ has the \textit{density property} if there is a countable subset $S$ of $\baA \setminus \{ 0 \}$ such that
		$$
		\forall a \in \baA\setminus\{0\}\ \forall \epsilon>0\ \exists s \in S\ \mu(a \wedge s) > \mu(s) (1 - \epsilon).
		$$
	\end{defi}
	
	\begin{defi}
		Define a relational system $\Rpt$ as follows.
		Its domain is $X = 2^{<\omega} \times (2^{<\omega})^\omega$.
		Its codomain is $Y = \{ T : T \subset 2^{<\omega} \text{ is a subtree} \}$.
		For $(t, \sigma) \in X$ and $T \in Y$, let
		$(t, \sigma) \sqsubset_\mathrm{pt} T$ iff $\Lb([T] \cap [t]) > 0$ and $\neg (t \subset \sigma(n) \text{ and } \sigma(n) \in T)$ for all but finitely many $n \in \omega$.
		Here, $\Lb$ denotes the Lebesgue measure on the Cantor space $2^\omega$.
		Put $\Rpt = (X, Y, \sqsubset_{\mathrm{pt}})$.
	\end{defi}
	
	The space $2^{<\omega}$ carries the discrete topology, and $Y$
	is a closed subspace of $2^{2^{<\omega}}$.
	To see that $\Rpt$ is a Polish relational system, write its relation
	as the increasing union, over $k\in\omega$, of the closed relations given by
	\[
	\Lb([T]\cap[t])\ge2^{-k}
	\quad\text{and}\quad
	\forall n\ge k\ \neg(t\subseteq\sigma(n)\in T).
	\]
	For each fixed $T$, these closed sets have nowhere dense sections
	in $X$: when $\Lb([T]\cap[t])>0$, an unrestricted coordinate
	$\sigma(n)$ can be chosen to extend $t$ in $T$.
	
	The following lemma is a refinement of Main Lemma 5.6 in \cite{brendle2025separating}.
	
	\begin{lem}\label{lem:pfam_density}
		Let $\baA$ be a Boolean algebra with a pfam $\mu$ having the density property.
		Then $\baA$ is $\Rpt$-good.
	\end{lem}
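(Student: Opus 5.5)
The plan is to verify the definition of $\Rpt$-goodness from \cite[Section~4]{brendle2025separating} directly, following the proof of Main Lemma~5.6 there. We regard $\baA$ as a forcing notion. Fix an $\baA$-name $\dot T$ for a subtree of $2^{<\omega}$ and a countable set $S\subseteq\baA\setminus\{0\}$ witnessing the density property. For $u\in2^{<\omega}$ write $\truth{u\in\dot T}$ for the corresponding Boolean value. Where this value is not in $\baA$ itself, use the inner measure
\[
\mu_*(c)=\sup\{\mu(b):b\in\baA,\ b\le c\}.
\]
For $s\in S$ and rational $\delta\in(0,1)$, define in the ground model
\[
T^{s,\delta}=\{u\in2^{<\omega}:\mu_*(s\wedge\truth{u\in\dot T})\ge\delta\,\mu(s)\}.
\]
Since $\truth{v\in\dot T}\le\truth{u\in\dot T}$ for $u\subseteq v$, each $T^{s,\delta}$ is a subtree. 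Let $H=\{T^{s,\delta}:s\in S,\ \delta\in\Q\cap(0,1)\}$, which is countable. It then suffices to show the following: if $(t,\sigma)$ satisfies $\neg((t,\sigma)\sqsubset_{\mathrm{pt}}T')$ for every $T'\in H$, then $\baA$ forces $\neg((t,\sigma)\sqsubset_{\mathrm{pt}}\dot T)$.

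Suppose instead that some $a\ne0$ forces $(t,\sigma)\sqsubset_{\mathrm{pt}}\dot T$. Extend $a$ to $a'\le a$ and fix $k$ such that $a'$ forces both $\Lb([\dot T]\cap[t])\ge2^{-k}$ and $\neg(t\subseteq\sigma(n)\in\dot T)$ for all $n\ge k$; this uses the increasing-union description of the relation given before the lemma. Choose a small rational $\varepsilon$, say $\varepsilon<2^{-k-3}$, and by the density property pick $s\in S$ with $\mu(a'\wedge s)>(1-\varepsilon)\mu(s)$. Put $\delta=2\varepsilon$.

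The first key step is to show $\Lb([T^{s,\delta}]\cap[t])>0$. Fix a level $m\ge|t|$ and let $L_m$ be the set of nodes of length $m$ extending $t$. Below $a'\wedge s$, the weighted count $\sum_{u\in L_m}2^{-m}[u\in\dot T]$ is forced to be at least $2^{-k}$. Integrating this finite sum against $\mu$ restricted to $s$, using finite additivity, gives
\[
\sum_{u\in L_m}2^{-m}\,\mu_*(s\wedge\truth{u\in\dot T})\ge(1-\varepsilon)2^{-k}\mu(s).
\]
A Markov-type splitting then gives
\[
\sum_{u\in L_m\cap T^{s,\delta}}2^{-m}\ge(1-\varepsilon)2^{-k}-\delta\,2^{-|t|},
\]
which is bounded below by a positive constant independent of $m$. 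Since the sets $\bigcup\{[u]:u\in L_m\cap T^{s,\delta}\}$ decrease in $m$ with intersection $[T^{s,\delta}]\cap[t]$, we get $\Lb([T^{s,\delta}]\cap[t])>0$.

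The second step uses the unboundedness hypothesis. Because $(t,\sigma)\not\sqsubset_{\mathrm{pt}}T^{s,\delta}$ while $\Lb([T^{s,\delta}]\cap[t])>0$, there are infinitely many $n$ with $t\subseteq\sigma(n)\in T^{s,\delta}$; take one with $n\ge k$. Then there is $b\in\baA$ with $b\le s\wedge\truth{\sigma(n)\in\dot T}$ and $\mu(b)>\varepsilon\mu(s)$. Since $\mu(a'\wedge s)>(1-\varepsilon)\mu(s)$ and $\mu$ is strictly positive and finitely additive, $b\wedge a'\ne0$. But $a'$ forces $\sigma(n)\notin\dot T$, a contradiction.

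I expect the main obstacle to be the measure bookkeeping in the first step. The Boolean values $\truth{u\in\dot T}$ and the event that $\Lb([\dot T]\cap[t])\ge2^{-k}$ need not lie in $\baA$, and $\mu$ is only finitely additive. The first thing I would try is to replace these values by elements of $\baA$ decided along finite antichains, or to work throughout with inner measures and check superadditivity on the finitely many nodes of $L_m$. If that fails, I would pass to a finitely additive extension of $\mu$ to the completion, as Brendle, Cardona, and Mej\'{\i}a do. The choice $\delta=2\varepsilon$ with $\varepsilon<2^{-k-3}$ also has to be checked against the $2^{-|t|}$ factor; if it does not suffice, I would let $\delta$ and $\varepsilon$ depend on $t$ as well as on $k$.
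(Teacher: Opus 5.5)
Your skeleton matches the paper's proof: the ground-model trees indexed by $s\in S$ and a rational, the Markov-type splitting that gives $\Lb([T^{s,\delta}]\cap[t])>0$, and the final use of unboundedness together with $\mu(s\setminus a')<\varepsilon\mu(s)$. Your parameters $\varepsilon<2^{-k-3}$ and $\delta=2\varepsilon$ also work; the factor $2^{-|t|}\le1$ only helps.

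\textbf{The gap.} The gap is the integration step with the inner measure $\mu_*$. To integrate, you split $s\wedge a'$ into the finitely many pieces $s\wedge c_E$ ($E\subseteq L_m$), where $c_E$ is $a'$ met with $\truth{u\in\dot T}$ for $u\in E$ and with the complements for $u\in L_m\setminus E$. You then need $\sum_E\mu_*(s\wedge c_E)=\mu(s\wedge a')$. But $\mu_*$ is only superadditive, so you only get $\le$, which is the wrong direction.

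This is not a bookkeeping issue: your trees themselves can be useless. Take $\baA$ to be the clopen algebra of $2^\omega$ with Lebesgue measure. This is Cohen forcing, basic clopen sets witness density, and $\mu_*(c)=\Lb(c)$ for regular open $c$.
\begin{itemize}
\item Choose nowhere dense perfect sets $K_j$ with $\Lb(K_j)>1-2^{-j}$.
\item Colour the complementary basic clopen pieces with $2^{j+1}$ colours so that every point of $K_j$ is a limit of each colour. The colour classes $c^i_j$ then form a maximal antichain of regular open sets with $\neg c^i_j\cap K_j=\emptyset$.
\item On consecutive blocks $B_j$ of length $j+2$ starting at $0$, fix distinct words $w^i_j$ for $i<2^{j+1}$. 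Let $\dot T$ forbid $w^i_j$ on $B_j$ for the unique $i$ with $c^i_j\in G$.
\end{itemize}
Each block removes a fraction $2^{-j-2}$, so $\Lb([\dot T])\ge1/2$ is forced.

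However, $\truth{u\in\dot T}\le\neg c^i_j$ whenever $u\restrict B_j=w^i_j$, so $\mu_*(s\wedge\truth{u\in\dot T})<2^{-j}$ for such $u$. Hence once $2^{-j}<\delta\mu(s)$, nodes of $T^{s,\delta}$ avoid all the $w^i_j$ on $B_j$, which is half of all words there. So every $[T^{s,\delta}]$ is null.

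It follows that every $(\emptyset,\sigma)$ is vacuously unbounded over your $H$. Yet a clopen condition inside $c^0_0$ forces $(\emptyset,\sigma)\sqsubset_{\mathrm{pt}}\dot T$ when $\sigma$ is constantly $w^0_0$. The same example rules out your other alternative. Clopen conditions deciding which $c^i_j$ lies in $G$ sit inside $2^\omega\setminus K_j$, so any finite antichain of them has measure $<2^{-j}$.

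\textbf{The fix.} The repair is the fallback you name, and it is exactly what the paper does. Pass to the completion and extend $\mu$ to a finitely additive $\bar\mu$ on it (Hahn--Banach). Since $\baA$ is dense in its completion, $\bar\mu(c)\ge\mu_*(c)>0$ for $c\ne0$, and the same $S$ still witnesses the density property.

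Define the trees using $\bar\mu(s\wedge\truth{u\in\dot T})$. The integration over the partition $\{s\wedge c_E\}$ is then honest finite additivity. In the last step you no longer need an element of $\baA$ below the Boolean value: $\bar\mu(s\wedge a'\wedge\truth{\sigma(n)\in\dot T})\ge2\varepsilon\mu(s)-\bar\mu(s\setminus a')>0$ already gives a condition below $a'$ forcing $\sigma(n)\in\dot T$.

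With this change your proof coincides with the paper's.
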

	\begin{proof}
		As in the proof of \cite[Main Lemma~5.6]{brendle2025separating},
		we may pass to the completion of $\baA$ and extend $\mu$
		while retaining the density property.
		
		Let $S$ be a witness of the density property.
		Let $\dot{T}$ be a $\baA$-name for a tree.
		Let $b_u = \truth{u \in \dot{T}}$ for $u \in 2^{<\omega}$.
		For $s \in S$ and a rational $\epsilon$ such that $0 < \epsilon < 1$, let
		$$
		T_{s,\epsilon} = \{ u \in 2^{<\omega} : \mu(s \wedge b_u) > \epsilon \mu(s) \}.
		$$
		These are trees in the ground model.
		We show that the countable family $H=\{T_{s,\epsilon}:s\in S,
		\epsilon\in\mathbb Q\cap(0,1)\}$ witnesses goodness.
		Fix $(t,\sigma)$ that is $\Rpt$-unbounded over $H$.
		Suppose, toward a contradiction, that there are
		$b\in\baA\setminus\{0\}$, $m\in\omega$, and
		$q\in\mathbb Q\cap(0,1)$ such that
		\[
		b\forces \Lb([\dot T]\cap[t])\ge q
		\ \text{and}\ \forall n\ge m\
		\neg(t\subseteq\sigma(n)\in\dot T).
		\]
		Take a rational number $\delta > 0$ so small that $\delta< q (1 - \delta)$.
		Take $\epsilon\in\mathbb Q\cap(0,1)$ such that $\delta<\epsilon<q(1-\delta)$.
		By the density property, we can take $s \in S$ such that $\mu(s \setminus b) < \delta \mu(s)$.
		
		For $k\ge|t|$, put $U_k = \{ u \in 2^k : t \subset u \}$.
		Since $b \forces \Lb([\dot{T}] \cap [t]) \ge q$, we have $b \forces \sum_{u \in U_k, u \in \dot{T}} 2^{-k} \ge q$.
		For $E \subset U_k$, let $c_E := b \wedge \bigwedge_{u \in E} b_u \wedge \bigwedge_{u \in U_k \setminus E} \neg b_u$.
		Then $c_E \forces  \sum_{u \in E} 2^{-k} \ge q$.
		So $\sum_{u \in E} 2^{-k} \ge q$ in the ground model provided $c_E \ne 0$.
		Therefore, we have
		\begin{align*}
			\sum_{u \in U_k} 2^{-k} \mu(s\wedge b_u) &\ge 
			\sum_{u \in U_k} 2^{-k} \mu(s \wedge b \wedge b_u) \\
			&= \sum_{E \subset U_k} \sum_{u \in E} 2^{-k} \mu(s \wedge c_E) \\
			&\ge q \sum_{E \subset U_k} \mu(s \wedge c_E) \\
			&= q \mu(s \wedge b) \\
			&> q(1-\delta)\mu(s).
		\end{align*}
		Put $G_k = T_{s,\epsilon} \cap U_k$ and $a_k = \Lb(\bigcup_{u \in G_k} [u]) = 2^{-k} \abs{G_k}$.
		We use the following estimates: if $u \in G_k$, then $\mu(s \wedge b_u) \le \mu(s)$. Also if $u \in U_k \setminus G_k$, then $\mu(s \wedge b_u) \le \epsilon \mu(s)$.
		Using this, we have
		\begin{align*}
			\sum_{u \in U_k} 2^{-k} \mu(s \wedge b_u) &= \sum_{u \in G_k} 2^{-k} \mu(s \wedge b_u) + \sum_{u \in U_k \setminus G_k} 2^{-k} \mu(s \wedge b_u) \\
			&\le a_k \mu(s) + \epsilon \mu(s) \sum_{u \in U_k \setminus G_k} 2^{-k} \\
			&\le (a_k + \epsilon) \mu(s).
		\end{align*}
		Combining the above two inequalities, we have
		$$
		q(1-\delta)\mu(s) < \sum_{u \in U_k} 2^{-k} \mu(s\wedge b_u) \le (a_k + \epsilon) \mu(s).
		$$
		On the other hand, we have $\lim_{k \to \infty} a_k = \Lb([T_{s,\epsilon}] \cap [t])$.
		Thus we have $\Lb([T_{s,\epsilon}] \cap [t]) \ge q (1-\delta) - \epsilon > 0$.
		
		Since $(t,\sigma)$ is $\Rpt$-unbounded over $H$, there is
		$n\ge m$ such that $t\subset\sigma(n)\in T_{s,\epsilon}$.
		Therefore, 
		$\mu(s \wedge b \wedge b_{\sigma(n)}) \ge \mu(s \wedge b_{\sigma(n)}) - \mu(s \setminus b) > (\epsilon - \delta)\mu(s) > 0$.
		Also, $s \wedge b \wedge b_{\sigma(n)} \forces \sigma(n) \in \dot{T}$.
		This is a contradiction to $b \forces (t, \sigma) \sqsubset_{\mathrm{pt}} \dot{T}$ witnessed by $m$.
	\end{proof}
	
	The density property can be omitted.
	\begin{thm}\label{thm:pfam}
		Let $\baA$ be a Boolean algebra with a pfam $\mu$.
		Then $\baA$ is $\Rpt$-good.
	\end{thm}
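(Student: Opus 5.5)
The plan is to reduce to Lemma~\ref{lem:pfam_density} by a localization argument in the style of the proof of \cite[Main Lemma~5.6]{brendle2025separating}. Goodness only concerns a single name $\dot T$ for a tree, and such a name is determined by the countably many Boolean values $b_u=\truth{u\in\dot T}$, $u\in2^{<\omega}$. Statements of the form ``$b\forces\Lb([\dot T]\cap[t])\ge q$ and $\forall n\ge m\ \neg(t\subseteq\sigma(n)\in\dot T)$'' are likewise decided by countably many elements of $\baA$.

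First, pass to the completion of $\baA$ and extend $\mu$ to it, as in Lemma~\ref{lem:pfam_density}. This is legitimate because $\Rpt$-goodness of a complete Boolean algebra and of a dense subalgebra coincide for names of reals, and the extension remains a pfam.

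Next, given a name $\dot T$, choose a countable subalgebra $\baA_0\subseteq\baA$ that contains all $b_u$ and is closed under the relevant operations. Concretely, $\baA_0$ should be generated by the $b_u$ together with the countably many Boolean values of the statements $\Lb([\dot T]\cap[t])\ge q$ and $\neg(t\subseteq\sigma(n)\in\dot T)$ for $n\ge m$. A countable algebra trivially has the density property: take $S=\baA_0\setminus\{0\}$ and $s=a$. The restriction $\mu\restriction\baA_0$ is still a pfam.

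The main obstacle is that $\baA_0$ need not be a complete subalgebra of $\baA$. The trees $T_{s,\epsilon}$ must therefore be defined using $\mu$ on the $b_u$ directly, rather than via forcing with $\baA_0$. To handle this, rerun the proof of Lemma~\ref{lem:pfam_density} with $S=\baA_0\setminus\{0\}$, replacing the single use of density ($\mu(s\setminus b)<\delta\mu(s)$) by an approximation step. Given $b\ne0$ forcing the bad statement, take instead $b'=\truth{\Lb([\dot T]\cap[t])\ge q\wedge\forall n\ge m\,\neg(t\subseteq\sigma(n)\in\dot T)}$, which lies in $\baA_0$ and is nonzero since $b\le b'$. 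Then set $s=b'$, so that $\mu(s\setminus b')=0$.

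Every computation in the lemma (the $c_E$ decomposition, the bound $\sum_u2^{-k}\mu(s\wedge b_u)\ge q\mu(s)$, and the final step $\mu(s\wedge b_{\sigma(n)})>0$ with $s\wedge b_{\sigma(n)}\forces\sigma(n)\in\dot T$) then goes through verbatim. That final step contradicts the fact that $s=b'$ forces $\neg(t\subseteq\sigma(n)\in\dot T)$. The resulting witness family $H=\{T_{s,\epsilon}:s\in\baA_0\setminus\{0\},\epsilon\in\mathbb Q\cap(0,1)\}$ is countable, which gives $\Rpt$-goodness.
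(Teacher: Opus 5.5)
There is a genuine gap at the step where you set $s=b'$. In $\Rpt$-goodness the countable family $H$ must be determined by $\dot T$ alone, before the pair $(t,\sigma)$ is given. Your element $b'=\truth{\Lb([\dot T]\cap[t])\ge q}\wedge\bigwedge\{\neg b_{\sigma(n)}:n\ge m,\ t\subseteq\sigma(n)\}$ is an infinite meet that depends on $\sigma$, and $\sigma$ ranges over the uncountable set $(2^{<\omega})^\omega$.

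The Boolean value of each single statement $\neg(t\subseteq\sigma(n)\in\dot T)$ is $1$ or some $\neg b_v$, so it already lies in the algebra generated by the $b_u$. The infinite meets $b'$, however, do not lie in any countable subalgebra fixed in advance. So your claim that $\baA_0$ is countable and contains $b'$ fails. If you close $\baA_0$ under all these meets, you essentially get the $\sigma$-complete subalgebra generated by the $b_u$, which can have size $\frakc$. Then $H=\{T_{s,\epsilon}:s\in\baA_0\setminus\{0\},\ \epsilon\in\mathbb Q\cap(0,1)\}$ is no longer countable.

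Your computation is correct once $T_{b',\epsilon}\in H$. But it only shows that the family of all $T_{b',\epsilon}$, one for each possible $(t,\sigma,m,q)$, is a witness, which is much weaker than goodness. For the same reason, the remark that a countable algebra has the density property is beside the point. The argument of Lemma~\ref{lem:pfam_density} needs to approximate the elements $b$ (or $b'$) of $\baA$ that force the bad statement, and these are generally outside $\baA_0$.

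What is missing is a countable set $S$, chosen from $\dot T$ alone, such that every such $b'$ is approximated by some $s\in S$ with $\mu(s\setminus b')<\delta\mu(s)$. That is the density property on at least the $\sigma$-complete subalgebra generated by the $b_u$. Since $\mu$ is only finitely additive, its value on an infinite meet is not controlled by its values on finite meets. So nothing allows the countable algebra generated by the $b_u$ to play this role.

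This is exactly why the paper does not localize directly. It completely embeds $\baA$ into $\mathbb B_\kappa*\dot Q$ with $\dot Q$ $\sigma$-centered. It then proves goodness of $\mathbb B_\kappa$: a name depends on countably many coordinates, where the measure has the density property, so Lemma~\ref{lem:pfam_density} applies. It also proves goodness of $\sigma$-centered algebras, which carry a pfam with the density property. Finally, it uses preservation of goodness under two-step iterations and complete suborders.
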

	\begin{proof}
		We follow \cite[Theorem~5.8]{brendle2025separating}.
		First, the random algebra $\mathbb B_\kappa$ is $\Rpt$-good
		for every infinite cardinal $\kappa$.
		Indeed, a name for a tree depends on countably many random
		coordinates.  On these coordinates, the random algebra has
		a measure with the density property, so
		Lemma~\ref{lem:pfam_density} applies.
		The resulting assertion remains true after forcing with the
		remaining coordinates, by absoluteness of $\sqsubset_{\mathrm{pt}}$.
		Also, every $\sigma$-centered Boolean algebra carries a pfam
		with the density property
		\cite[Lemma~5.5(iii)]{brendle2025separating}, and hence is
		$\Rpt$-good.

		By the embedding theorem used in
		\cite[Theorem~5.8]{brendle2025separating}, $\baA$ completely
		embeds into $\mathbb B_\kappa*\dot Q$ for some
		$\mathbb B_\kappa$-name $\dot Q$ for a $\sigma$-centered poset.
		Goodness is preserved by two-step iterations and passes to
		complete suborders, so $\baA$ is $\Rpt$-good.
	\end{proof}
	
	The following theorem is a refinement of Theorem 5.10 in \cite{brendle2025separating}.
	
	\begin{thm}\label{thm:rpt_iteration}
		Let $\theta_0 \le \theta$ be uncountable regular cardinals,
		let $\lambda=\lambda^{<\theta_0}$ be a cardinal, and put
		$\pi=\lambda\delta$ (ordinal multiplication) for some ordinal
		$0<\delta<\lambda^+$.
		Assume $\theta \le \lambda$ and $\cf(\pi) \ge \theta_0$.
		If $P$ is a finite support iteration of length $\pi$ of
		nontrivial $\theta_0$-cc $\theta$-$\Rpt$-good posets of size
		at most $\lambda$, then $P$ forces
		$(\lambda,[\lambda]^{<\theta},\in)\preceq_\mathrm{T}
		(\SN,\nul,\subset)$.
		In particular, $P$ forces $\add(\SN,\nul)\le\theta$.
	\end{thm}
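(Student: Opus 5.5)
The plan is to follow the proof of \cite[Theorem~5.10]{brendle2025separating}, with the relational system $\Rpt$ in place of the one used there. The difference from that paper is only in the conclusion: we witness the failure of $\nul$-membership rather than the failure of $\SN$-membership.

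The first step is the general preservation theorem for goodness in finite support iterations \cite[Section~4]{brendle2025separating}. Since each iterand is $\theta_0$-cc, $\theta$-$\Rpt$-good and of size at most $\lambda$, while $\lambda=\lambda^{<\theta_0}$, $\cf(\pi)\ge\theta_0$ and $\pi=\lambda\delta$, that theorem yields a Tukey connection
\[
(\lambda,[\lambda]^{<\theta},\in)\preceq_{\mathrm T}\Rpt
\]
in the final extension. It is witnessed by Cohen reals $(t_\xi,\sigma_\xi)\in 2^{<\omega}\times(2^{<\omega})^\omega$, added at cofinally many stages (nontriviality of the iterands supplies them). Concretely, every tree $T$ in the extension satisfies $(t_\xi,\sigma_\xi)\sqsubset_{\mathrm{pt}}T$ for fewer than $\theta$ many $\xi<\lambda$. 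The Polish-system requirements on $\Rpt$ (increasing union of closed relations with nowhere dense sections) were verified above, and Theorem~\ref{thm:pfam} and Lemma~\ref{lem:pfam_density} were built for exactly this step.

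The second step translates $\Rpt$ into the pair $(\SN,\nul)$. To each $(t,\sigma)$ I associate the set
\[
Z_{t,\sigma}=\bigcap_{m}\bigcup_{n\ge m,\,t\subseteq\sigma(n)}[\sigma(n)],
\]
or a suitable variant designed to land in $\SN$. Since $\sigma_\xi$ is Cohen, the lengths $|\sigma(n)|$ should be taken along a prescribed fast-growing pattern; I would instead reparametrize $\sigma$ through a fixed function $h$ so that $[\sigma(n)]$ has length $h(n)$. Then $\bigcup_{n\ge m}[\sigma(n)]$ covers the set with diameters as small as desired, which is the strong measure zero form. The map $\xi\mapsto$ (a point, or a small set, from $Z_{t_\xi,\sigma_\xi}$) gives the first half of the Tukey map.

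The second half sends $N\in\nul$ to an index set. Given $N$, choose a closed set $[T]$ with $\Lb([T])>0$ and $[T]\cap N=\emptyset$, working locally: for each $t$, let $T_t$ satisfy $\Lb([T_t]\cap[t])>0$ whenever possible, and assemble countably many such trees. If $Z_{t_\xi,\sigma_\xi}\subseteq N$, then $\sigma_\xi(n)\notin T$ eventually for the relevant $n$, which gives $(t_\xi,\sigma_\xi)\sqsubset_{\mathrm{pt}}T$. Hence fewer than $\theta$ many $\xi$ have their set inside $N$. The union of any $\theta$ many of these sets then lies in $\SN$ but not in $\nul$, using that $\SN$ is closed under unions of fewer than $\add(\SN)$ sets. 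This is where I need care: I would rather map each $A\subseteq\lambda$ of size $<\theta$ to a single set in $\SN$. The cleaner route is to take points $x_\xi\in Z_{t_\xi,\sigma_\xi}$ and map $A\mapsto\{x_\xi:\xi\in A\}$, which is in $\SN$ because $|A|<\theta\le$ the relevant bound. If that fails, I would map $\xi$ directly to singletons and use the Tukey formulation with the roles of $\theta$ adjusted.

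I expect the main obstacle to be this translation: arranging that the objects attached to the $(t_\xi,\sigma_\xi)$ are genuinely strong measure zero, while nullness of $N$ still forces the $\sqsubset_{\mathrm{pt}}$ relation for an appropriate tree. That means handling the ``$\Lb([T]\cap[t])>0$'' clause uniformly in $t$, which I plan to do by choosing, for each $t$, a positive-measure closed subset of $[t]\setminus N$. The ``in particular'' then follows from the Tukey connection, since $\add(\lambda,[\lambda]^{<\theta},\in)=\theta$.
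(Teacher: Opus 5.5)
There is a genuine gap in your second step: nothing you attach to a single index is both strong measure zero and controlled by $\nul$. For Cohen $\sigma$ and any one fixed length function $h$, the set $Z_{t,\sigma}$ is a dense $G_\delta$ subset of $[t]$. It therefore contains a perfect set and is not in $\SN$. Covers by pieces of lengths $h(n)$ only beat length functions below $h$, whereas strong measure zero must beat every $g\in\omega^{\uparrow\omega}$.

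Passing to points $x_\xi$ cannot work either. A family of $\lambda$ reals such that each null set contains fewer than $\theta$ of them would give $\non(\nul)\le\theta$. That is false in the intended application, where $\non(\nul)=\lambda>\aleph_1=\theta$.

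Your key implication also fails. Having infinitely many $n$ with $t_\xi\subseteq\sigma_\xi(n)\in T$ at the single node $t_\xi$ does not produce a branch of $T$ through infinitely many $\sigma_\xi(n)$. So $(t_\xi,\sigma_\xi)\not\sqsubset_{\mathrm{pt}}T$ does not yield $Z_{t_\xi,\sigma_\xi}\not\subseteq N$. What you need is non-domination for \emph{every} $t\in T$ simultaneously, with $T$ chosen so that $\Lb([T]\cap[t])>0$ for all $t\in T$; take $T$ to be the support of Lebesgue measure on a closed set disjoint from $N$. Then the hits are dense in $T$.

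Two smaller points. The Tukey maps go $\lambda\to\SN$ and $\nul\to[\lambda]^{<\theta}$, not from $[\lambda]^{<\theta}$ to $\SN$. And the union of $\theta$ witnesses is merely non-null; being ``in $\SN$ but not in $\nul$'' is impossible.

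The missing idea is to group the indices. Partition $\pi$ into $\lambda$ cofinal sets $Z^0_\beta$. Arrange by bookkeeping that the length functions $f_\alpha$, $\alpha\in Z^0_\beta$, run through all of $\omega^{\uparrow\omega}$, and take $\sigma_\alpha\in\prod_n2^{f_\alpha(n)}$ Cohen over $V_{\gamma_\alpha}$. Then set $X_\beta=\bigcap_{\alpha\in Z^0_\beta}[\sigma_\alpha]_\infty$, which is in $\SN$.

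At this point a black-box Tukey connection for $\Rpt$ no longer suffices. Even if every $\sigma_\alpha$ with $\alpha\in Z^0_\beta$ hits $T$ densely, you need one point of $[T]$ lying in $\lambda$ many relatively comeager subsets of $[T]$, and Baire category does not provide one.

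The paper instead opens the goodness machinery. The goodness support of $\dot T$ meets fewer than $\theta$ of the intervals $I_\alpha$. For $\alpha$ outside these, $(t,\sigma_\alpha)\not\sqsubset_{\mathrm{pt}}T$ holds for all $t$ at once, which leaves fewer than $\theta$ bad $\beta$. For a good $\beta$, pick $\xi\notin Z^0_\beta$ such that $\dot T$ is a $P_{\gamma_\xi}$-name, and a Cohen real $c\in[T]$ over $V_{\gamma_\xi}$.
\begin{itemize}
\item For $\alpha\in Z^0_\beta$ above $\xi$: $c\in V_{\gamma_\alpha}$, so genericity of $\sigma_\alpha$ puts $c$ in $[\sigma_\alpha]_\infty$.
\item For $\alpha$ below $\xi$: the density of $\{\sigma_\alpha(n)\in T:n\ge m\}$ in $T$ already holds in $V_{\gamma_\xi}$, so genericity of $c$ gives the same conclusion.
\end{itemize}
Hence $c\in X_\beta\cap[T]$, and so $X_\beta\not\subseteq N$. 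This time-ordered argument is what your proposal lacks.
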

	\begin{proof}
		Write $\seq{P_\xi,\dot Q_\xi:\xi<\pi}$ for the iteration,
		with $P_\pi=P$, and put $V_\xi=V[G\restrict\xi]$.
		By \cite[Lemma~2.2]{brendle2025separating}, it suffices to
		construct $\{X_\beta:\beta<\lambda\}\subseteq\SN$ such that
		\[
		\bigl|\{\beta<\lambda:X_\beta\subseteq N\}\bigr|<\theta
		\quad\text{for every }N\in\nul
		\]
		in the final extension.
		
		Let $\seq{\gamma_\alpha:\alpha<\pi}$ enumerate $0$ and
		all limit ordinals below $\pi$, and put
		$I_\alpha=[\gamma_\alpha,\gamma_{\alpha+1})$.
		These intervals have order type $\omega$.
		Partition $\pi$ into cofinal sets
		$\seq{Z^0_\beta:\beta<\lambda}$, each of size $\lambda$.
		As in \cite[Theorem~5.10]{brendle2025separating}, bookkeeping
		using $\lambda^{<\theta_0}=\lambda$, the $\theta_0$-chain
		condition, and $\cf(\pi)\ge\theta_0$ gives
		$P_{\gamma_\alpha}$-names $\dot f_\alpha$ such that
		\[
		P_\pi\forces
		\{\dot f_\alpha:\alpha\in Z^0_\beta\}=\omega^{\uparrow\omega}
		\qquad(\beta<\lambda).
		\]
		Here $\omega^{\uparrow\omega}$ is the set of strictly increasing
		functions from $\omega$ to $\omega$.
		The iteration over each interval $I_\alpha$ adds a Cohen real.
		Choose $\sigma_\alpha\in\prod_n2^{f_\alpha(n)}$ in
		$V_{\gamma_{\alpha+1}}$ that is Cohen over $V_{\gamma_\alpha}$,
		and denote its name by $\dot\sigma_\alpha$.
		
		In the final extension, put
		\[
		X_\beta=\bigcap_{\alpha\in Z^0_\beta}[\sigma_\alpha]_\infty,
		\qquad
		[\sigma]_\infty=\{x\in2^\omega:
		\exists^\infty n\ \sigma(n)\subseteq x\}.
		\]
		Every increasing sequence of lengths occurs among the
		$f_\alpha$, $\alpha\in Z^0_\beta$, so $X_\beta\in\SN$.
		
		We first record the unboundedness supplied by these Cohen reals.
		For every $\alpha<\pi$ and $t\in2^{<\omega}$,
		$(t,\sigma_\alpha)$ is $\Rpt$-unbounded over $V_{\gamma_\alpha}$.
		Indeed, if $T\in V_{\gamma_\alpha}$ and
		$\Lb([T]\cap[t])>0$, then for each $m$ it is dense in
		$\prod_n2^{f_\alpha(n)}$ to choose an unrestricted coordinate
		$n\ge m$ with $f_\alpha(n)\ge|t|$ and
		$t\subseteq\sigma_\alpha(n)\in T$.
		This proves the required unboundedness directly in the space
		where $\sigma_\alpha$ is Cohen.
		
		Work in $V$ and fix a $P_\pi$-name $\dot N$ for a null set.
		Choose a nice name $\dot T$ for a nonempty perfect tree such that
		\[
		P_\pi\forces [\dot T]\cap\dot N=\emptyset
		\quad\text{and}\quad
		\forall t\in\dot T\ \Lb([\dot T]\cap[t])>0.
		\]
		Such a tree is obtained by taking a compact set of positive
		measure disjoint from a Borel null hull of $\dot N$, and then
		taking the support of Lebesgue measure restricted to that set.
		
		Let $S=\operatorname{gsupp}_{\Rpt}(\dot T)$ be the goodness
		support from \cite[Definition~4.7]{brendle2025separating}.
		By Lemma~4.8 of that reference, $|S|<\theta$.
		Consequently, the ground model set
		\[
		A=\{\alpha<\pi:S\cap I_\alpha\ne\emptyset\}
		\]
		has size less than $\theta$.
		For $\alpha\notin A$, apply Theorem~4.9 of that reference
		with $\xi=\gamma_\alpha$, $\xi'=\gamma_{\alpha+1}$, and
		$\delta=\pi$.
		It gives a family of trees in $V_{\gamma_\alpha}$ such that
		any point unbounded over that family in $V_{\gamma_{\alpha+1}}$
		remains unbounded by $\dot T$ in the final extension.
		The preceding observation therefore yields
		\[
		P_\pi\forces
		\forall t\in2^{<\omega}\
		(t,\dot\sigma_\alpha)\not\sqsubset_{\mathrm{pt}}\dot T
		\qquad(\alpha\notin A).
		\]
		Put $B=\{\beta<\lambda:Z^0_\beta\cap A\ne\emptyset\}$;
		then $|B|<\theta$.
		
		Fix $\beta\in\lambda\setminus B$.
		Since $\dot T$ is a nice name for a real and
		$\cf(\pi)\ge\theta_0$, it is a $P_{\gamma_\xi}$-name for
		some $\xi\in\pi\setminus Z^0_\beta$.
		Choose a $P_{\gamma_{\xi+1}}$-name $\dot c$ for a Cohen
		real in $[\dot T]$ over $V_{\gamma_\xi}$.
		We show that $P_\pi$ forces
		$c\in X_\beta\cap[T]$, and hence $X_\beta\not\subseteq N$.
		
		Fix $\alpha\in Z^0_\beta$; then $\alpha\ne\xi$.
		If $\xi<\alpha$, then $c\in V_{\gamma_\alpha}$, and Cohen
		genericity of $\sigma_\alpha$ gives
		$c\in[\sigma_\alpha]_\infty$.
		If $\alpha<\xi$, then $\alpha\notin A$, so
		\[
		\forall t\in T\ \exists^\infty n\
		t\subseteq\sigma_\alpha(n)\in T.
		\]
		This assertion is absolute between $V_{\gamma_\xi}$ and the
		final extension.  Thus, in $V_{\gamma_\xi}$, the set
		$\{\sigma_\alpha(n)\in T:n\ge m\}$ is dense in $T$ for
		every $m$.  Cohen genericity of $c$ in $[T]$ again gives
		$c\in[\sigma_\alpha]_\infty$.
		
		It follows that $P_\pi$ forces
		$\{\beta<\lambda:X_\beta\subseteq N\}\subseteq B$.
		Since $|B|<\theta$, the proof is complete.
	\end{proof}
	
	\begin{cor}
		It is consistent that $\add(\UN) < \cov(\UN) < \non(\UN) < \cof(\UN)$.
	\end{cor}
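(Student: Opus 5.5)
We will force over a model of GCH with a finite support iteration of ccc posets, modelled on the Brendle--Cardona--Mej\'{\i}a separation of the left side of Cicho\'n's diagram, and read off the four invariants of $\UN$. Fix regular cardinals $\aleph_1\le\theta_1<\theta_2<\theta_3$ and $\lambda=\lambda^{<\theta_3}\ge\theta_3$; we aim for
\[
\add(\UN)=\theta_1,\quad \cov(\UN)=\theta_2,\quad \non(\UN)=\theta_3,\quad \cof(\UN)>\lambda=\frakc .
\]
Let $P$ be the finite support iteration of length $\pi=\lambda\theta_3$ which, by bookkeeping, cycles through three kinds of iterands. First, amoeba forcing restricted to subalgebras of size $<\theta_1$, to get $\add(\nul)\ge\theta_1$. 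Second, random forcing over submodels of size $<\theta_2$, to get $\cov(\nul)\ge\theta_2$. Third, $\sigma$-centered or random-type posets of size $<\theta_3$ that make every set of size $<\theta_3$ null, to get $\non(\nul)\ge\theta_3$. Since $\cf(\pi)=\theta_3$, the iteration also adds cofinally many Cohen reals.

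Each of the four values then follows from a separate argument.

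\textbf{$\add(\UN)$.} The lower bound comes from $\add(\nul)\le\add(\UN)$ (the Corollary of Section~\ref{sec:basic_add}). For the upper bound, every iterand is a subalgebra of a small random algebra, a small amoeba, or $\sigma$-centered, so it carries a pfam. By Theorem~\ref{thm:pfam} each iterand is therefore $\Rpt$-good, and since it has size $<\theta_1$ it is $\theta_1$-$\Rpt$-good. Theorem~\ref{thm:rpt_iteration}, applied with $\theta_0=\theta=\theta_1$, then gives $\add(\SN,\nul)\le\theta_1$. The diagram above shows $\add(\UN)\le\add(\SN,\nul)$, so $\add(\UN)=\theta_1$.

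\textbf{$\non(\UN)$.} By Proposition~\ref{prop:uniformity}, $\non(\UN)=\non(\nul)$. This is at least $\theta_3$ by the third kind of iterand. It is at most $\theta_3$ because the final $\theta_3$ Cohen reals form a nonnull set: the standard ccc argument with $\cf(\pi)=\theta_3$ shows every null set is coded at an earlier stage.

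\textbf{$\cov(\UN)$.} Since $\UN\subseteq\nul$, we have $\cov(\UN)\ge\cov(\nul)\ge\theta_2$. For the upper bound I plan to keep $\non(\meager)\le\theta_2$ by the standard goodness preservation of the Cicho\'n relational system for $\non(\meager)$, which is available because every iterand of size $<\theta_2$ is harmless and the random iterands are good. This yields a nonmeager set of size $\theta_2$, but that does not by itself bound $\cov(\UN)$. A cleaner route is to cover $2^\omega$ by $\theta_2$ universally null sets of the form $\{x\}$-translates or perfectly meager sets, which is less clear. This is the step I expect to be the main obstacle. My first attempt is to use $\cov(\UN)\le\non(\meager)$: cover the space by sets of size $<\theta_2$ indexed through a nonmeager set and use Luzin-type arguments. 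Failing that, I would use $\cov(\UN)\le\cof(\nul)$-type bounds restricted to the iteration. Concretely, I would show that for $\theta_2$ many intermediate models $V_\xi$ of size $<\theta_2$ in the relevant sense, the sets $2^\omega\cap V_\xi$ are universally null in the final model, and that their union is everything. This is in the spirit of Theorem~\ref{thm:miller_reals_universally_null}, using that random and Cohen reals added later make the earlier reals null for any measure coded early.

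\textbf{$\cof(\UN)$.} Corollary~\ref{cor:lower_bounds_of_cofinality}(3) gives $\cof(\UN)>\non(\nul)=\theta_3$. To separate it strictly from the others it is enough that $\cof(\UN)\ge\frakc=\lambda>\theta_3$, which is Corollary~\ref{cor:lower_bounds_of_cofinality}(1).
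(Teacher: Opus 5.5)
\textbf{The gap: the upper bound $\cov(\UN)\le\theta_2$.} This bound is the heart of the corollary, and none of your candidate routes establishes it.

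\emph{Why your routes fail.} The inequality $\cov(\UN)\le\non(\meager)$ is not provable. In the Sacks model $\non(\meager)=\cof(\nul)=\aleph_1$, while the paper's remark on previous work gives $\cov(\UN)=\frakc=\aleph_2$ there. The same model also rules out any bound of the form $\cov(\UN)\le\cof(\nul)$. Covering $2^\omega$ by the reals of $\theta_2$ intermediate models cannot work either: $\cf(\pi)=\theta_3>\theta_2$ puts any $\theta_2$ of the models $V_\xi$ inside a single $V_\eta$ with $\eta<\pi$, so their reals do not exhaust $2^\omega$. You would also need an analogue of Corollary~\ref{cor:preservation} for measures coded late, which you do not have for this iteration.

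\emph{The missing observation.} Use $\SN\subseteq\UN$. This gives $\cov(\nul)\le\cov(\UN)\le\cov(\SN)$, so it suffices to keep $\cov(\SN)$ small. That is exactly what \cite[Theorem~6.2]{brendle2025separating} supplies: over $\mathsf{GCH}$, a finite support iteration of length $\lambda$ of subalgebras of random forcing yields $\cov(\nul)=\cov(\SN)=\kappa<\cov(\meager)=\frakc=\lambda$. In that model:
\begin{itemize}
\item $\cov(\UN)=\kappa$ by the sandwich above;
\item $\non(\UN)=\non(\nul)=\lambda$, since $\cov(\meager)\le\non(\nul)\le\frakc$;
\item the iterands carry pfams, so Theorems~\ref{thm:pfam} and~\ref{thm:rpt_iteration} with $\theta_0=\theta=\aleph_1$ give $\add(\UN)\le\add(\SN,\nul)\le\aleph_1$;
\item Corollary~\ref{cor:lower_bounds_of_cofinality}(3) gives $\cof(\UN)>\lambda$.
\end{itemize}
This is the paper's proof. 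Your richer iteration, with amoeba and a third kind of iterand, would require re-proving the $\cov(\SN)$ bound from scratch, and you do not attempt this.

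\textbf{Two further errors, both repairable.}

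First, your argument for $\non(\nul)\le\theta_3$ confuses category with measure. Cofinally many Cohen reals form a nonmeager set, not a nonnull one. In fact each Cohen real over a model containing the code of a fixed dense $G_\delta$ null set lies in that set, so your Cohen reals form a null set. You do not need this upper bound, however. The lower bound $\non(\UN)\ge\theta_3>\theta_2$ already follows from $\cf(\pi)\le\cov(\meager)\le\non(\nul)$, which makes your third kind of iterand superfluous. Together with the $\ZFC$ inequality $\non(\UN)<\cof(\UN)$, this gives the last two strict inequalities, so there is no reason to separate $\non(\UN)$ from $\frakc$.

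Second, your claim that the amoeba iterands carry pfams is unjustified. Amoeba forcing cannot provably carry one: otherwise Theorems~\ref{thm:pfam} and~\ref{thm:rpt_iteration} would force $\add(\nul)\le\aleph_1$ after an $\omega_2$-stage finite support iteration of amoeba forcing over $\mathsf{GCH}$. The claim is also unnecessary. Posets of size ${<}\theta_1$ are $\theta_1$-$\Rpt$-good by the standard smallness lemma, so your computation $\add(\UN)=\theta_1$ survives.
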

	\begin{proof}
		Start with a model of $\mathsf{GCH}$, and fix regular cardinals
		$\aleph_1<\kappa<\lambda$, for example
		$\kappa=\aleph_2$ and $\lambda=\aleph_3$.
		Apply \cite[Theorem~6.2]{brendle2025separating} to obtain
		\[
		\cov(\nul)=\cov(\SN)=\kappa
		<\cov(\meager)=\frakc=\lambda.
		\]
		The inclusions $\SN\subseteq\UN\subseteq\nul$ give
		$\cov(\UN)=\kappa$, while
		$\cov(\meager)\le\non(\nul)\le\frakc$ gives
		$\non(\UN)=\non(\nul)=\lambda$.
		The forcing is a finite support iteration of length $\lambda$
		whose iterands are subalgebras of random forcing and hence
		carry pfams.
		Theorems~\ref{thm:pfam} and~\ref{thm:rpt_iteration}, with
		$\theta_0=\theta=\aleph_1$, therefore yield
		\[
		\aleph_1\le\add(\UN)\le\add(\SN,\nul)\le\aleph_1.
		\]
		Finally, Corollary~\ref{cor:lower_bounds_of_cofinality}(3)
		gives $\lambda=\non(\nul)<\cof(\UN)$.
	\end{proof}

	\subsection{The diagram}

	The following diagram summarizes the inequalities.
	Every arrow indicates $\le$; the two heavier arrows indicate $<$.
	
	\[
	\begin{tikzpicture}
		\newcommand{\w}{2.6}
		\newcommand{\h}{2.2}
		
		\node (aleph1) at (-\w*0.7, 0) {$\aleph_1$};
		
		\node (addN) at (0, 0) {$\add(\nul)$};
		\node (covN) at (0, \h*2) {$\cov(\nul)$};
		
		\node (addM) at (\w, 0) {$\add(\meager)$};
		\node (b) at (\w, \h) {$\mathfrak{b}$};
		\node (nonM) at (\w, \h*2) {$\non(\meager)$};
		
		\node (covM) at (\w*2, 0) {$\cov(\meager)$};
		\node (d) at (\w*2, \h) {$\mathfrak{d}$};
		\node (cofM) at (\w*2, \h*2) {$\cof(\meager)$};
		
		\node (nonN) at (\w*3, 0) {$\non(\nul)$};
		\node (cofN) at (\w*3, \h*2) {$\cof(\nul)$};
		
		\node (c) at (\w*3.9, \h*3) {$\frakc$};

		\node (addUN) at (\w/2,\h*0.4) {$\add(\UN)$};
		\node (covUN) at (\w/2,\h*2.66) {$\cov(\UN)$};
		\node (cofUN) at (\w*3.9,\h*3.6) {$\cof(\UN)$};
		\node (covSN) at (\w*1.5,\h*3) {$\cov(\mathcal{SN})$};
		\node (cofSN) at (\w*1.5,\h*3.5) {$\cof(\mathcal{SN})$};
		\node[align=center] (nonUN) at (\w*3, 0.3*\h) {$\non(\UN)$ \\ \rotatebox{90}{$=$}};
		
		\draw[thick,->] (aleph1) to (addN);
		\draw[thick,->] (addN) to (covN);
		\draw[thick,->] (addN) to (addM);
		\draw[thick,->] (covN) to (nonM);	
		\draw[thick,->] (addM) to (b);
		\draw[thick,->] (b) to (nonM);
		\draw[thick,->] (addM) to (covM);
		\draw[thick,->] (nonM) to (cofM);
		\draw[thick,->] (covM) to (d);
		\draw[thick,->] (d) to (cofM);
		\draw[thick,->] (b) to (d);
		\draw[thick,->] (covM) to (nonN);
		\draw[thick,->] (cofM) to (cofN);
		\draw[thick,->] (nonUN) to (cofN);
		\draw[thick,->] (cofN) to (c);
		\draw[thick,->] (addN) to (addUN);
		\draw[white, line width=6pt] (addUN) to (covUN);
		\draw[thick,->] (addUN) to (covUN);
		\draw[thick,->] (covN) to (covUN);
		\draw[white, line width=6pt] (addUN) to ($(nonUN)+(-1,0.2)$);
		\draw[thick,->] (addUN) to ($(nonUN)+(-1,0.2)$);
		%\draw[thick,->] (covUN) to (cofUN);
		\draw[thick,->] (covUN) to (covSN);
		\draw[thick,->] (covSN) to (cofSN);
		%\draw[thick,->] (covUN) to (c);
		\draw[thick,->] (covSN) to (c);
		\draw[thick,->] (c) to (cofUN);

		\draw[line width=3pt, ->] (nonUN) to (cofUN);
		\draw[line width=3pt, ->] (b.east) .. controls ($(b)+(6.2,1.2)$) and ($(cofUN)+(-3.5,-0.2)$) .. (cofUN.west);
	\end{tikzpicture}
	\]
	
	\section{Discussion}\label{sec:discussion}
	
	This section lists the remaining questions about the cardinal invariants of $\UN$.
	
	%\begin{question}
	%	Is it consistent that $\cof(\UN) < \frakc$, or does $\ZFC$ prove $\frakc \le \cof(\UN)$?
	%\end{question}
	
	\begin{question}
		In the Laver model or the Mathias model, what is the value of $\cov(\UN)$?
	\end{question}
	
	\begin{question}
		Does $\cof(\SN) \le \cof(\UN)$ hold?
	\end{question}
	
	\begin{question}
		Is it consistent that $\add(\nul) < \add(\UN)$, or does $\ZFC$ prove $\add(\nul) = \add(\UN)$?
	\end{question}
	A natural way to try to increase $\add(\UN)$ is to force $\UN = [2^\omega]^{\le \aleph_1}$. But this statement is inconsistent.
	
	\begin{prop}
		$\UN \ne [2^\omega]^{\le \aleph_1}$.
	\end{prop}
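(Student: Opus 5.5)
We argue by contradiction: suppose $\UN=[2^\omega]^{\le\aleph_1}$, i.e. a set is universally null exactly when it has size at most $\aleph_1$. We will show that this forces both $\frakc\le\aleph_1$ and $\frakc\ge\aleph_2$.

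First, $\frakc=\aleph_1$ is impossible. In that case $2^\omega$ itself would lie in $[2^\omega]^{\le\aleph_1}=\UN$. But $2^\omega$ carries the atomless product measure and has outer measure $1$, so $2^\omega\notin\UN$. Hence the assumption forces $\frakc\ge\aleph_2$.

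Next we bound $\frakc$ from above, using Corollary~\ref{cor:lower_bounds_of_cofinality}(1), which gives $\frakc\le\cof(\UN)$. We then need an upper bound on $\cof(\UN)$ that clashes with this, and there are two routes.

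\emph{Via uniformity.} Since every set of size $\aleph_1$ would be universally null while $\UN\ne\Pow(2^\omega)$, we get $\non(\UN)=\aleph_2$. By Proposition~\ref{prop:uniformity}, $\non(\nul)=\aleph_2$. Grzegorek's theorem, as used in Corollary~\ref{cor:lower_bounds_of_cofinality}(3), then yields a universally null set of size $\non(\nul)=\aleph_2$. This contradicts the assumption that every universally null set has size at most $\aleph_1$. This route is the cleanest.

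\emph{Via the Sacks-model count.} Alternatively, compute $\cof(\UN)=\cof([2^\omega]^{\le\aleph_1},\subseteq)$ and compare it with $\frakc<\cof(\UN)$ using Theorem~\ref{thm:lower_bound_of_cofinality}. This route is messier, so we use the first.

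Combining the two steps: the assumption forces $\frakc\ge\aleph_2$, so $\non(\UN)=\aleph_2$, and Grzegorek's set of size $\aleph_2$ contradicts $\UN\subseteq[2^\omega]^{\le\aleph_1}$.

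The main point to verify is that $\non(\UN)=\aleph_2$ follows from the assumption. Every set of size at most $\aleph_1$ lies in $\UN$, so $\non(\UN)\ge\aleph_2$. Conversely, $\non(\UN)\le\frakc$, and any subset of size $\aleph_2$, which exists since $\frakc\ge\aleph_2$, is not in $\UN$ by assumption. Hence $\non(\UN)=\aleph_2$.

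The application of Grzegorek's theorem is then routine: it produces $X\in\UN$ with $|X|=\non(\nul)=\aleph_2$, contradicting the hypothesis. If one prefers not to rely on Grzegorek's theorem in this generality, the fallback is Theorem~\ref{thm:lower_bound_of_cofinality} combined with an explicit cofinal family in $[2^\omega]^{\le\aleph_1}$.
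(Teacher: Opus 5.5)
Your uniformity route is exactly the paper's proof: $2^\omega\notin\UN$ forces $\frakc\ge\aleph_2$, hence $\non(\UN)=\non(\nul)=\aleph_2$, and Grzegorek's universally null set of size $\non(\nul)$ contradicts the hypothesis. Drop the cofinality-based alternative and the closing ``fallback'': under the hypothesis, Theorem~\ref{thm:lower_bound_of_cofinality} only yields $\aleph_1<\cof(\UN)$, and together with $\frakc\le\cof(\UN)$ this gives no contradiction; also, the opening claim that you will derive $\frakc\le\aleph_1$ does not describe what you actually do.
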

	\begin{proof}
		Suppose $\UN = [2^\omega]^{\le \aleph_1}$.
		Since $2^\omega\notin\UN$, this implies $\frakc>\aleph_1$.
		Hence $\non(\UN)=\non(\nul)=\aleph_2$.
		But, by Grzegorek's theorem \cite{grzegorek1980solution}, there is a universally null set of size $\non(\nul)$.
		This contradicts $\UN = [2^\omega]^{\le \aleph_1}$.
	\end{proof}
	
	\begin{question}
		Is $\add(\UN) < \non(\UN) < \cov(\UN) < \cof(\UN)$ consistent?
	\end{question}

    \section*{Use of AI}

    The author used ChatGPT extensively in developing and refining the
    proofs and in drafting and revising this paper.  The author takes
    full responsibility for all mathematical arguments and for the
    contents of the final manuscript.

	\printbibliography
\end{document}